%
%
%
\documentclass{amsart}
\usepackage{amscd,amssymb}
\usepackage{amsmath,amsthm,amsfonts,srcltx}
\usepackage{enumitem}


\numberwithin{equation}{section}



\newcommand{\diag}{\textrm{diag}}
\newcommand{\nonvar}{\textrm{nv}}
\newcommand{\hyp}{\textrm{hyp}}
\newcommand{\EO}{\textrm{EO}}
\newcommand{\gzero}{\textrm{g0}}

\newcommand{\G}{\Gamma}

\newcommand{\e}{{e}}
\newcommand{\nofint}{n} 

%
\newlength{\halfbls}\setlength{\halfbls}{.5\baselineskip}
\newcommand{\mydummy}{\phantom{0.4854}}

%
\newcommand{\noz}{r}     
\newcommand{\mult}{\mu}  
\newcommand{\prop}{p}    

%
\renewcommand{\epsilon}{\varepsilon}
\renewcommand{\Re}{\operatorname{Re}}
\renewcommand{\Im}{\operatorname{Im}}

%
\newcommand{\SLZ}{\operatorname{SL}(2,{\mathbb Z})}
\newcommand{\SLR}{\operatorname{SL}(2,{\mathbb R})}
\newcommand{\GLR}{\operatorname{GL}(2,{\mathbb R})}
\newcommand{\Uh}{\operatorname{U}_{hor}}
\newcommand{\Uv}{\operatorname{U}_{vert}}

\newcommand{\CP}{{\mathbb C}\!\operatorname{P}^1}

\newcommand\C{\mathbb C}
\newcommand\N{\mathbb N}
\newcommand\Q{\mathbb Q}
\newcommand\R{\mathbb R}
\newcommand\T{\mathbb T}
\newcommand\ZZ{{\mathbb Z}/2{\mathbb Z}}
\newcommand\Z{\mathbb Z}

%
\newcommand{\cD}{{\mathcal D}}
\newcommand{\cH}{{\mathcal H}}
\newcommand{\cL}{{\mathcal L}}

\newcommand{\cN}{{\mathcal N}}
\newcommand{\cQ}{{\mathcal Q}}
\newcommand{\cS}{{\mathcal S}}

%
\newcommand{\Area}{\operatorname{Area}}
\newcommand{\card}{\operatorname{card}}
\newcommand{\Vol}{\operatorname{Vol}}
\newcommand{\vol}{\operatorname{vol}}

%
\newtheorem{Theorem}{Theorem}[section]
\newtheorem{thm}[Theorem]{Theorem}
\newtheorem{condTheorem}[Theorem]{Conditional Theorem}

\newtheorem*{NoNumberTheorem}{Theorem}

\newtheorem{Proposition}[Theorem]{Proposition}

\newtheorem{Lemma}[Theorem]{Lemma}

\newtheorem*{NNLemma}{Lemma}
\newtheorem{Conjecture}[Theorem]{Conjecture}
\newtheorem{Question}[Theorem]{Question}

\newtheorem{Corollary}[Theorem]{Corollary}
\newtheorem{condCorollary}[Theorem]{Conditional Corollary}

\newtheorem*{Problem}{Problem}

\theoremstyle{definition}
\newtheorem{Definition}[Theorem]{Definition}

\newtheorem{Convention}[Theorem]{Convention}
\newtheorem*{NNConvention}{Convention}

\theoremstyle{remark}
\newtheorem{Remark}[Theorem]{Remark}
\newtheorem{rem}[Theorem]{Remark}
\newtheorem{Example}[Theorem]{Example}

\allowdisplaybreaks


\date{December 26, 2016}

\begin{document}

\title[Square-tiled surfaces of fixed combinatorial type]
{Square-tiled surfaces of fixed combinatorial type:
equidistribution, counting, volumes of the ambient strata}

\author[V.~Delecroix]{Vincent Delecroix}
\address{
LaBRI,
Domaine universitaire,
351 cours de la Lib\'eration, 33405 Talence, FRANCE
}
\email{20100.delecroix@gmail.com}

\author[\'E.~Goujard]{\'Elise Goujard}
\address{
Laboratoire de Math\'ematiques d'Orsay,
 Univ. Paris-Sud, CNRS,
  Universit\'e Paris-Saclay,
  91405 Orsay, FRANCE}
\email{elise.goujard@gmail.com}

\author[P.~G.~Zograf]{Peter~Zograf}
\thanks{Research of Section~\ref{s:Alternative:counting} is supported by the RScF grant 16-11-10039.}
\address{
St.~Petersburg Department, Steklov Math. Institute, Fontanka 27,
St. Petersburg 191023, and Chebyshev Laboratory,
St. Petersburg State University, 14th
Line V.O. 29B, St.Petersburg 199178 Russia}
\email{zograf@pdmi.ras.ru}

\author[A.~Zorich]{Anton Zorich}
\thanks{Research of the fourth author is partially supported by IUF}
\address{
Institut Universitaire de France;
Institut de Math\'ematiques de Jussieu --
Paris Rive Gauche,
UMR7586,
B\^atiment Sophie Germain,
Case 7012,
75205 PARIS Cedex 13, France}
\email{anton.zorich@imj-prg.fr}

\begin{abstract}
We  prove  that  square-tiled  surfaces having fixed combinatorics
of horizontal cylinder decomposition and tiled with smaller and
smaller squares become asymptotically equidistributed in any
ambient linear $\GLR$-invariant suborbifold defined over $\Q$ in
the moduli space of Abelian differentials. Moreover,
we prove that the combinatorics of the horizontal and of the
vertical decompositions are asymptotically uncorrelated.
As a consequence, we prove the existence
of an asymptotic distribution for the combinatorics
of a ``random'' interval exchange transformation with integer
lengths.

We compute explicitly the \textit{absolute} contribution of square-tiled
surfaces having  a  single  horizontal cylinder  to  the Masur--Veech
volume of any ambient stratum of Abelian differentials. The resulting
count  is  particularly  simple  and efficient in the large
genus asymptotics. We  conjecture that the corresponding \textit{relative}  contribution is
asymptotically of  the  order  $1/d$,  where  $d$ is the dimension of
the stratum, and prove  that  this  conjecture  is  equivalent  to
the  long-standing conjecture on the large genus asymptotics of the
Masur--Veech volumes. We prove, in particular, that the recent
results of Chen, M\"oller and Zagier imply that the conjecture holds
for the principal stratum of Abelian differentials as the genus tends to infinity.

Our result on random interval exchanges with integer
lengths allows to make empirical computation of
the probability to get a $1$-cylinder pillowcase cover taking a
``random'' one in a given stratum. We use this technique to derive
the approximate values  of  the Masur--Veech volumes of strata of
quadratic differentials of all small dimensions.
\end{abstract}

\maketitle
\tableofcontents

\section*{Introduction}

\noindent\textbf{Siegel--Veech constants and Masur--Veech volumes.}
One  of the most powerful tools in the study of billiards in rational
polygons  (including  ``wind-tree'' billiards with periodic obstacles
in  the  plane),  of interval exchange transformations
and   of  measured  foliations  on  surfaces  is
renormalization.   In  particular,  to  describe  delicate  geometric
and   dynamical   properties   of   the   initial  billiard, interval  exchange
transformation or  measured  foliation,  one  has
to  find  the  $\GLR$-orbit  closure  of  the  associated translation
surface   in   the   moduli   space   of   Abelian   (or   quadratic)
differentials,  and  to  study  its  geometry. This approach,
initiated by H.~Masur and W.~Veech four decades ago became
particularly  powerful recently due  to  the breakthrough rigidity
theorems  of Eskin--Mirzakhani--Mohammadi~\cite{Eskin:Mirzakhani}
and~\cite{Eskin:Mirzakhani:Mohammadi}.

The moduli space of Abelian (or quadratic) differentials is
stratified by the degrees of zeroes of the Abelian (or quadratic)
differential. Each stratum is endowed with a natural measure, the
\textit{Masur--Veech} measure, that is preserved by the
$\SLR$-action. Moreover, in each connected component of a stratum
this $\SLR$-action is ergodic
(after restriction to any
hypersurface of Abelian differentials defining
flat surfaces of constant area).
In many important
situations the $\SLR$-orbit closure of a translation
surface is  an  entire  connected component of a  stratum.  In  order
to count the growth rate  for  the number of closed geodesics on a
translation surface as in~\cite{Eskin:Masur},   or   in  order  to
describe  the  deviation spectrum  of  a  measured  foliation  as
in~\cite{Forni:Deviation} , \cite{Zorich:How:do},  or  to count the
diffusion rate of a wind-tree as in~\cite{Delecroix:Hubert:Lelievre},
\cite{Delecroix:Zorich}, one has to    compute    the
corresponding    \textit{Siegel--Veech constants},
see~\cite{Veech:SV}, and the Lyapunov exponents of the Hodge bundle
over the connected component of stratum. Both   quantities  are
expressed  by  explicit combinatorial   formulas   in   terms   of
the  Masur--Veech volumes     of     the     strata,
see~\cite{Eskin:Masur:Zorich}, \cite{EKZ}, \cite{AEZ:genus:0},
\cite{Goujard:volumes}.
\medskip

\noindent\textbf{Equidistribution Theorem.} The Masur--Veech volumes
of strata  of Abelian differentials and of meromorphic quadratic
differentials with at most simple poles were  computed
in~\cite{Eskin:Okounkov:Inventiones},
\cite{Eskin:Okounkov:pillowcase},                                 and
\cite{Eskin:Okounkov:Pandharipande}. The underlying idea (see
also~\cite{Zorich:square:tiled}) was a computation of the asymptotic
number of ``integer points'' (the ones having coordinates in $\Z+i\Z$
in period coordinates) in appropriate bounded domains exhausting the
stratum. Such integer points are represented by ``square-tiled''
surfaces in the strata of Abelian differentials, and by ``pillowcase
covers'' for the strata of quadratic differentials. The square-tiled
surfaces are ramified covers over the standard torus with
all ramification points located over a single point of the torus. The
pillowcase covers are covers over $\CP$ ramified over four points
such that all ramifications  over three out of the four points are of
order $2$.

Similarly, points of an $\epsilon$-grid in period coordinates of a
stratum correspond to square-tiled surfaces (respectively pillowcase
covers for strata of quadratic differentials) tiled with
$\epsilon\times\epsilon$-squares. Each square-tiled surface carries
interesting combinatorial geometry, for example, the decomposition
into maximal flat horizontal cylinders.

We  prove  in Theorem~\ref{th:equidistribution} of
section~\ref{s:Main:results} that  square-tiled
surfaces having fixed combinatorics of horizontal cylinder
decomposition and tiled with smaller and smaller squares become
asymptotically equidistributed in the ambient stratum. Actually, we
prove that this equidistribution theorem
holds not only for strata but for any $\GLR$-invariant suborbifold
that contains a single square-tiled surface\footnote{Or
equivalently, over the $\GLR$-invariant suborbifolds defined over $\Q$; see
section~\ref{s:Main:results} or \cite{Wright:field:of:def}.}.
This means that taking a tiny $\epsilon$-grid in an open domain $U$
of finite volume and taking a random point of this grid, we get a
square-tiled surface having given combinatorics of horizontal
cylinder decomposition with probability which asymptotically (as
$\epsilon\to 0$) does not depend on $U$.

The Equidistribution Theorem gives sense to the notion of
(asymptotic) probability $p_k$ for a ``random'' square-tiled surface
in a given stratum to have a fixed number $k \in \{1, 2, \dots, g+\noz-1\}$ of maximal
cylinders in its horizontal or vertical cylinder decomposition, where
$g$ is the genus of the surface and $r$ is the number of conical
singularities (ramification points). We show that the corresponding
probabilities for horizontal and vertical cylinder decompositions are
uncorrelated.

An interval exchange transformation is called \textit{rational}
if all its intervals under exchange have rational lengths.
We obtain in Theorem~\ref{th:iet} an analogous equidistribution
statement for rational interval exchange transformations.
The probabilities $p_k$ that appear in this
context are the same as the ones
for cylinder decompositions of square-tiled surfaces.
It allows us to give sense to the notion of (asymptotic) probability $p_k$ for a
``random'' rational interval exchange transformation with a given
permutation to have $k$  bands of isomorphic closed trajectories.
\medskip

\noindent\textbf{Contribution of $1$-cylinder square-tiled surfaces
and large genus asymptotics of Masur--Veech volumes.}
The only currently known approach to compute
Masur--Veech volumes of strata of Abelian differentials is based on
counting square-tiled surfaces. In
section~\ref{s:contribution:of:1:cylinder} we compute the
\textit{absolute}
contribution $c_1(\cH)$ of $1$-cylinder square-tiled surfaces to the
Masur--Veech volume of a stratum $\cH$,
where $c_1(\cH) := p_1(\cH) \cdot \Vol\cH$.
We define similarly $c_k(\cH)$, and by definition $\Vol\cH = c_1(\cH)
+ c_2(\cH) + \ldots + c_{g+r-1}(\cH)$. We give simple close exact
formulas for the contribution $c_1(\cH)$ to the volumes
$\Vol\cH(2g-2)$ and $\Vol\cH(1,\dots,1)$ of minimal and principal
strata of Abelian differentials. We also provide sharp upper and
lower bounds for contributions of $1$-cylinder square-tiled surfaces
to the Masur--Veech volumes of any stratum. The ratio of the upper
and lower bounds tends to $1$ as $g\to+\infty$ uniformly for all
strata in genus $g$, so the bounds are particularly efficient in
large genus asymptotics.

We  conjecture that the corresponding \textit{relative}  contribution
$p_1(\cH)$ of $1$-cylinder square-tiled surfaces to the Masur--Veech
volume $\Vol\cH$ of any stratum $\cH$ of Abelian differentials is
asymptotically of  the  order  $1/d$  as $g$ (equivalently $d$) tends
to infinity. Here $d$ is the dimension $d=\dim_\C(\cH)$ of the
stratum $\cH$.

We prove  that  this  conjecture  is  equivalent  to  the
long-standing conjecture~\cite{Eskin:Zorich} on the large genus
asymptotics of the Masur--Veech volumes of strata of Abelian
differentials. We prove that the recent
results~\cite{Chen:Moeller:Zagier} of D.~Chen, M.~M\"oller and
D.~Zagier imply that the conjecture holds for the principal stratum
of Abelian differentials as genus tends to infinity.
\medskip

\noindent\textbf{Siegel--Veech constants and Masur--Veech volumes of
strata of meromorphic quadratic differentials.}
We indicated that the Masur--Veech  volumes  were  computed
in~\cite{Eskin:Okounkov:Inventiones},
\cite{Eskin:Okounkov:pillowcase},                                 and
\cite{Eskin:Okounkov:Pandharipande}. In   particular,  it is proved
in these papers that  the volume of every connected component of
every  stratum in genus $g$ has  the  form  $r\cdot\pi^{2g}$,  where
$r$ is some rational  number. The generating function
in~\cite{Eskin:Okounkov:Inventiones}   was  translated  by A.~Eskin
into  a  very  efficient  computer  code,  which allowed to evaluate
explicitly volumes of all connected components of all strata of
Abelian differentials in genera up to $g=10$ (that is, to compute
explicitly  the  corresponding  rational  numbers  $r$), and for some
strata  up to $g=60$. Recent results~\cite{Chen:Moeller:Zagier} allow
to compute $r$ for the principal stratum up to genus $g=2000$ and
higher.

The  approach  elaborated by A.~Eskin, A.~Okounkov, R.~Pandharipande
in~\cite{Eskin:Okounkov:pillowcase},   and
in~\cite{Eskin:Okounkov:Pandharipande} to the computation  of the
Masur--Veech volumes of the strata of \textit{quadratic}
differentials had to wait for another decade to  be translated into
tables of numbers. One of the reasons for such a delay  is  a  more
involved  combinatorics  and multitude of various conventions  and
normalizations  required in volume computations (which is a common
source of mistakes in normalization factors like powers
of  $2$). This is why it is necessary to test theoretical predictions on some  table
of volumes obtained by an independent method. In the case of  Abelian
differentials,  the  volumes  of several low-dimensional strata  were
computed by a direct combinatorial method elaborated by A.~Eskin,
M.~Kontsevich,  and  one  of the authors; this approach is described
in~\cite{Zorich:square:tiled}. Another, even more reliable test  was
provided by computer simulations of Lyapunov exponents and their ties
with volumes through Siegel--Veech constants. In  the case of
quadratic differentials, explicit values of volumes of the   strata
in   genus   zero   were   conjectured by M.~Kontsevich about fifteen
years ago. The conjecture was proved in recent
papers~\cite{AEZ:Dedicata}  and~\cite{AEZ:genus:0}.  Further explicit
values  of volumes of all low-dimensional strata up to dimension $11$
were obtained in~\cite{Goujard:volumes}.

Appendices~\ref{s:contibution:of:diag:for:two:lattices}--\ref{s:tables:volumes}
describe   an   approach   to  the evaluation  of \textit{approximate}
values  of  volumes  of  several  dozens of low dimensional   strata.
Our   approach relies on the Equidistribution Theorem. The idea is
to evaluate   experimentally    the approximate value of the
probability $p_1(\cQ)$ to get a $1$-cylinder pillowcase  cover taking
a ``random'' pillowcase cover in a given stratum $\cQ$ of quadratic
differentials. Then we compute rigorously the absolute contribution
$c_1(\cQ)$ of $1$-cylinder pillowcase covers to the Masur--Veech
volume $\Vol\cQ$ of the stratum.
Relation $c_1(\cQ)= p_1(\cQ) \cdot \Vol\cQ$
now provides the
approximate value  of  the Masur--Veech volume $\Vol\cQ$ of the
stratum $\cQ$ of quadratic differentials.

This approach  is  completely  independent  of  the  one  of
A.~Eskin and A.~Okounkov  based on the representation theory. The
approximate data obtained  in this paper were used for ``debugging''
rigorous formulas in~\cite{Goujard:SV} and~\cite{Goujard:volumes}.

The  fact  that  our experimental results match theoretical
ones       in~\cite{AEZ:Dedicata},       \cite{AEZ:genus:0},      and
in~\cite{Goujard:volumes}, and that the induced theoretical values of
Siegel--Veech    constants    obtained   in~\cite{Goujard:SV}   match
independent computer experiments evaluating the Lyapunov exponents of
the Hodge bundle over the Teichm\"uller geodesic flow, as well as the
exact  values  of  the  sums  of  such  Lyapunov  exponents  computed
in~\cite{Chen:Moeller}  for  the  non-varying  strata  provides  some
reliable   evidence  that  the  nightmare  of  various  combinatorial
conventions    leads,   nevertheless,   to   correct   and   coherent
general formulas presented in~\cite{Goujard:SV} and
in~\cite{Goujard:volumes}.
\medskip

\noindent\textbf{Structure of the paper.}
Section~\ref{s:Main:results} is devoted to equidistribution and
section~\ref{s:contribution:of:1:cylinder} studies the contribution
of $1$-cylinder square-tiled surfaces (pillowcase covers) to the
Masur--Veech volumes of the strata. The two sections are, basically,
independent with an exception for a short
section~\ref{ss:Application:experimental:evaluation:of:MV:volumes}
describing the experimental approach to the computation of Masur--Veech
volumes based on the combination of results of the two sections.

Section~\ref{s:Alternative:counting} is independent of the first two:
it presents alternative approaches to counting $1$-cylinder
square-tiled surfaces and pillowcase covers based on recursive
relations (section~\ref{ss:recursive:relations}) and on construction of
the Rauzy diagrams (section~\ref{ss:Rauzy:classes}).

To make the paper self-contained we added
Appendix~\ref{s:overview:of:MV:volumes} suggesting necessary basic
information on the Masur--Veech volumes. It provides
information dispersed through several research papers and might be
useful for better understanding of any of the preceding
sections.

The content of Appendix~\ref{s:contibution:of:diag:for:two:lattices}
was isolated to avoid overloading the main body of the paper. It
describes certain subtlety related to normalization of the Masur--Veech
volumes which is not visible in quantitative considerations, but which is
relevant and non-trivial in the context of the current paper.

Appendix~\ref{s:tables:volumes} presents tables of the
Masur--Veech volumes of low-dimensional strata in the moduli spaces
of meromorphic quadratic differentials with at most simple poles
obtained by the method combining the equidistribution and the counting
results from this paper (the method is described in
section~\ref{ss:Application:experimental:evaluation:of:MV:volumes}).
The tables compare approximate values of volumes with the exact ones
indicating in each case the method of the exact computation.

\medskip

\noindent\textbf{Acknowledgements.}
We thank C.~Matheus, L.~Monin and P.~Pushkar Jr. for numerous
valuable conversations. We are grateful to A.~Eskin for important
discussions, and for his suggestion of the proof of
Lemma~\ref{lm:Moore}.


\section{Equidistribution}
\label{s:Main:results}

\subsection{Definition of an essential invariant lattice subset}

Recall  that  any  stratum $\cH(m_1,\dots,m_\noz)$ in  the moduli
space of Abelian differentials is modelled on the relative
cohomology  $H^1(S,\{P_1,\dots,P_\noz\};\C)$,  where $S$ is the
underlying  topological surface and $\{P_1,\dots,P_r\}$ is a finite
collection   of  zeroes  of  an  Abelian  differential. Square-tiled
surfaces  tiled with unit squares correspond to ``integer points'' in
the stratum: they are represented by  the  points  of  the lattice
$H^1(S,\{P_1,\dots,P_\noz\};\Z\oplus i\Z)$ in period coordinates.

Denote by $\operatorname{P}\subset\SLR$ the subgroup of
upper-triangular matrices. Let $\cL$ be a suborbifold in the ambient
stratum supporting a finite $\operatorname{P}$-invariant ergodic
measure. By the fundamental results~\cite{Eskin:Mirzakhani},
\cite{Eskin:Mirzakhani:Mohammadi} of Eskin, Mirzakhani, Mohammadi,
$\cL$ is represented in period coordinates as the complexification of
a linear subspace $L\subset H^1(S,\{P_1,\dots,P_\noz\};\R)$. We
denote by $d$ the dimension of $L$, which is also the complex
dimension of $\cL$. In the current paper we always assume that the
linear subspace $L$ is  defined by a system of linear equations with
rational coefficients for some basis in
$H^1(S,\{P_1,\dots,P_\noz\};\Z)\subset
H^1(S,\{P_1,\dots,P_\noz\};\R)$, or, equivalently, that the invariant
suborbifold $\cL$ is defined over $\Q$ (see
\cite{Wright:field:of:def} for the notion of the \textit{field of
definition} of a $\GLR$-invariant suborbifold). For example, any
connected component of a stratum or a $\GLR$-orbit of any
square-tiled surface is defined over $\Q$. By a Theorem of
A.~Wright~\cite{Wright:field:of:def}, any connected $\GLR$-invariant
suborbifold containing a single square-tiled surface is defined over
$\Q$.

The rationality assumption implies that $L\cap
H^1(S,\{P_1,\dots,P_\noz\};\Z)$ forms a $d$-dimensional lattice in
$L$ and thus defines a volume element in the vector space $L$ by the
condition that the volume of a fundamental domain in the induced
integer lattice in $L$ is equal to one. We denote by $d\nu$ the
induced volume element in $\cL$.

Consider the following subgroups of $\SLZ$:
$$
\Uh=\left\{\begin{pmatrix} 1&n\\0&1\end{pmatrix}\right\},\
\Uv=\left\{\begin{pmatrix} 1&0\\n&1\end{pmatrix}\right\},\
\text{ where }n\in\Z\,.
$$

\begin{Definition}
\label{def:essential:lattice:subset}
An  \textit{essential  $\Uh$-invariant
(respectively $\Uv$-invariant)
lattice subset} in a closed connected linear $\GLR$-invariant
orbifold $\cL$ defined over $\Q$ is a subset $\cD_\Z\subset\cL$
of  square-tiled surfaces in $\cL$ (tiled with \textit{unit} squares),
satisfying  the following two properties:
\begin{enumerate}[label=\alph*)]
\item The set $\cD_\Z$ is invariant under the action of
      the subgroup $\Uh$ (respectively $\Uv$);
\item
The following limit exists and is strictly positive:
\begin{equation}
\label{eq:limit}
2d\cdot \lim_{a\to+\infty} \frac{1}{a^{d}} \cdot
\card\{S\in \cD_\Z\,\big|\, \Area(S) \le a\} = c(\cD_\Z) >0\,,
\end{equation}
where $d=\dim_\C\cL$.
\end{enumerate}
\end{Definition}

\begin{Remark}
The important part of condition~\eqref{eq:limit} is the
\textit{existence} of the limit. When the limit exists but is zero, the statements
formulated below stay valid, but become trivial.

The normalization factor $2d$ is chosen by esthetic reasons; it is coherent
with the traditional normalization of the Masur--Veech volume and makes
numerous formula less bulky.
\end{Remark}

One can also study \textit{weighted} analogs of essential lattice
subsets associating to each square-tiled surface some $\Uh$-invariant
(respectively $\Uv$-invariant) weight (like the area of one of the
maximal cylinder divided by the total area of the square-tiled
surface).

Recall  that  $\cL_1$ denotes the real hypersurface in $\cL$
of  those  pairs  $(C,\omega)$ in $\cL$
for  which  the  area  defined  by  the Abelian differential $\omega$
equals  one,  $\frac{i}{2}\int_C  \omega\wedge\bar\omega=1$.  The cone
$C_R X\subset\cH(m_1,\dots,m_\noz)$ over a subset
$X\subset\cL$ is defined as
$$
C_R X:=\{(C,r\cdot\omega)\,|\,(C,\omega)\in X,\ 0<r\le R\}\,.
$$
Geometrically the flat surface $r\cdot S=(C,r\cdot\omega)$ is
obtained from the flat surface $S=(C,\omega)$ by applying homothety
with coefficient $r$:
$$
(C,r\cdot\omega)=\begin{pmatrix}r&0\\0&r\end{pmatrix}\cdot S\,.
$$
Note that $\Area(r\cdot S)=r^2\Area(S)$. By
$$
C_\infty X:=\{(C,r\cdot\omega)\,|\,(C,\omega)\in X,\ 0<r\}
$$
we denote the cone over $X$ with no restrictions on the scaling
factor.

By definition, the \textit{Masur--Veech volume} $\nu_1(X_1)$
of a subset $X_1\subset\cL_1$
is defined as the $\nu$-volume of the ``unit cone'' over $X_1$
normalized by a dimensional factor:
\begin{equation}
\label{eq:nu1}
\nu_1(X_1):=2d\cdot \nu(C_1 X_1)\,,
\quad\text{where }d=\dim_\C\cL\,.
\end{equation}
Another fundamental result of Eskin, Mirzakhani,
Mohammadi~\cite{Eskin:Mirzakhani}, \cite{Eskin:Mirzakhani:Mohammadi}
states that rescaling the initial finite ergodic measure on $\cL_1$
by an appropriate constant factor we get the Masur--Veech volume
element $d\nu_1$. In particular, $\nu_1(\cL_1)$ is finite.
(Finiteness of Masur--Veech volumes of the strata was proved much
earlier independently by H.~Masur in~\cite{Masur:82} and by W.~Veech
in~\cite{Veech:Gauss:measures}.)

We denote by $\cL_\Z$ the set of all points in $\cL$ represented by
the points of the lattice $H^1(S,\{P_1,\dots,P_\noz\};\Z\oplus i\Z)$
in period coordinates. Finiteness of the Masur--Veech volume
$\nu_1(\cL_1)$ implies that for any linear invariant orbifold $\cL$
defined over $\Q$ the set $\cL_\Z$ is an essential lattice subset
invariant under both $\Uh$ and $\Uv$. The invariance is obvious while
existence and finiteness of the limit~\eqref{eq:limit} follows from
the definition of the Masur--Veech volume: the limit $c(\cL_\Z)$
coincides with the volume of $\cL_1$:
$$
c(\cL_\Z)={\nu_1(\cL_1)}\,.
$$
Given any essential lattice subset $\cD_\Z$ in $\cL$, we also
define a ``probability'' $\prop(\cD_\Z)$ by
\begin{multline}
\label{eq:prop}
\prop(\cD_\Z) =
\lim_{a\to+\infty}
\frac{\card\{S\in \cD_\Z\,\big|\, \Area(S) \le a\}}
{\card\{S\in \cL_\Z\,\big|\, \Area(S) \le a\}}
=
\\
=\frac{c(\cD_\Z)}{c(\cL_\Z)}
=\frac{c(\cD_\Z)}{\nu_1(\cL_1)}
=\frac{c(\cD_\Z)}{2d\cdot \nu(C_1 \cL_1)}\,.
\end{multline}
The quantity $\prop(\cD_\Z)$ gives the asymptotic proportion of the
square-tiled surfaces which form the subset $\cD_\Z$ among all
square-tiled surfaces $\cL_\Z$ in $\cL$ as the number of squares of
tiling tends to infinity. It represents the probability to get a
square-tiled surface in $\cD_\Z$ taking a random square-tiled surface
in $\cL_\Z$.

\subsection{Existence of essential invariant lattice subsets}

Our first result shows that the $\GLR$-invariant suborbifolds of the
strata in the moduli space of Abelian differentials or meromorphic
quadratic differentials with at most simple poles defined over $\Q$
contain many interesting $\Uh$ (respectively $\Uv$) invariant
essential lattice subsets. Note that every square-tiled surface has a
decomposition into maximal horizontal (respectively vertical)
cylinders. One can count separately the numbers of $1$-cylinder,
$2$-cylinder, etc $k$-cylinder square-tiled surfaces tiled with at
most $N$ squares. Clearly, the number of maximal horizontal
(respectively vertical) cylinders is invariant under the action of
$\Uh$ (respectively of $\Uv$). One can go further and fix the
combinatorics of the way these maximal cylinders are attached to each
other along horizontal (respectively vertical) saddle connections;
this combinatorics is described by the associated \textit{critical
graph} called also \textit{separatrix diagram}. To make our paper
self-contained we recall the formal definition of a separatrix
diagram in section~\ref{ss:separatrix:diagrams}.

A separatrix diagram $\cD$ is called ``realizable'' in $\cL$ if there
exists a surface in $\cL$ {with periodic
horizontal foliation having a decomposition into maximal horizontal
cylinders given by the diagram $\cD$.

\begin{Theorem}
\label{th:sep:diagram:is:essential}
Let $\cL$ be a $\GLR$-invariant suborbifold of Abelian differentials
(or quadratic differentials with at most simple poles) defined over
$\Q$. Let $\cD$ be a separatrix diagram realizable in $\cL$. Then the
set $\cD_\Z(\cL)$   of  all  square-tiled  surfaces in $\cL$
(respectively pillowcase  covers in $\cL$ for the strata of quadratic
differentials) sharing  the diagram  $\cD$  is  an  essential lattice
subset in $\cL$.
\end{Theorem}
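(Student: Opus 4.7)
The plan is to verify the two conditions of Definition~\ref{def:essential:lattice:subset}. Condition~(a) is immediate: any unipotent element $\begin{pmatrix} 1 & n \\ 0 & 1 \end{pmatrix} \in \Uh$ acts on period coordinates by an integer linear map, so it preserves the lattice $H^1(S,\{P_i\};\Z \oplus i\Z)$ and hence $\cL_\Z$. Since it fixes the horizontal direction, it shifts each horizontal cylinder along its own core curve by an integer amount, preserving the horizontal cylinder decomposition and the separatrix diagram. Hence $\cD_\Z(\cL)$ is $\Uh$-invariant.

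For condition~(b), my approach is a direct lattice point count in coordinates adapted to $\cD$. For a surface in $\cL$ realizing $\cD$ with $k$ cylinders and $s$ horizontal saddle connections, one uses as local coordinates the saddle-connection lengths $\ell = (\ell_1,\dots,\ell_s)$, the cylinder heights $h = (h_1,\dots,h_k)$, the twists $t_i \in \R/w_i(\ell)\Z$, and, if needed, the positions of any order-$0$ marked points; each circumference $w_i(\ell)$ is a prescribed linear form in $\ell$ read off from $\cD$. These are all affine functions of standard period coordinates. Because $\cL$ is a complex linear subspace defined over $\Q$, its defining equations split into identical rational linear constraints on the horizontal data (real parts of periods, i.e.\ on $\ell$ and $t$) and on the vertical data (imaginary parts of periods, i.e.\ on $h$), yielding rational affine subspaces $L^{\mathrm{hor}}$ and $L^{\mathrm{vert}}$.

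Square-tiled surfaces in $\cD_\Z(\cL)$ correspond to integer points in this parametrization with $\ell_j,\,h_i \ge 1$ and $t_i \in \{0,1,\dots,w_i-1\}$. Since $\Area(S) = \sum_i w_i(\ell)\,h_i$, one can write
\begin{equation*}
N(a) := \card\{S\in \cD_\Z(\cL) : \Area(S) \le a\}
= \sum_{\substack{\ell \in L^{\mathrm{hor}} \cap \Z^s \\ \ell > 0}} T(\ell)\cdot\card\Bigl\{h \in L^{\mathrm{vert}} \cap \Z^k,\,h>0 \,:\, \textstyle\sum_i w_i(\ell)\,h_i \le a\Bigr\},
\end{equation*}
where $T(\ell)$ counts the admissible integer twists. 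Under the rescaling $(\ell,h)\mapsto(\sqrt{a}\,\ell,\sqrt{a}\,h)$ — the $\SLR$-homothety $(C,\omega)\mapsto(C,\sqrt{a}\,\omega)$ which scales area by $a$ — the right-hand side becomes a Riemann sum for the $\nu$-volume of the set of unit-area surfaces in $\cL$ realizing $\cD$. Standard asymptotics for lattice point counts in rational polyhedral cones (Ehrhart / Riemann sum theory) then yield existence of the limit $\lim_{a\to\infty} a^{-d} N(a)$.

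The main obstacle is strict positivity of this limit, and this is where the realizability hypothesis is essential. The existence of a single $(C_0,\omega_0)\in\cL$ with horizontal diagram $\cD$ provides an open cone of admissible real parameters $(\ell,t,h)$ in $L^{\mathrm{hor}}\times L^{\mathrm{vert}}$ with all $\ell_j,h_i>0$, so the cone over the locus of unit-area surfaces realizing $\cD$ has strictly positive $\nu$-measure, forcing $c(\cD_\Z)>0$. A technical point worth emphasizing is that $T(\ell)$ must be shown to grow asymptotically like a positive rational multiple of $\prod_i w_i(\ell)$ — a standard quasi-polynomial Ehrhart statement for counting integer solutions of rational linear equations on the product box $\prod_i(\Z/w_i\Z)$. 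The $\Uv$-invariant statement follows by swapping horizontal and vertical roles, and the pillowcase-cover case for strata of meromorphic quadratic differentials reduces to the Abelian case via the orienting double cover.
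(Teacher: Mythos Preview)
Your verification of condition~(a) and the overall parametrization by $(\ell,t,h)$ are correct and match the paper's setup. The gap is in the justification of condition~(b). The sentence ``the right-hand side becomes a Riemann sum for the $\nu$-volume of the set of unit-area surfaces in $\cL$ realizing $\cD$'' does not hold: the locus of surfaces realizing a fixed separatrix diagram is a \emph{lower-dimensional} subset of $\cL$ (for a $k$-cylinder diagram in a $d$-dimensional stratum, the heights contribute only $k<d$ real parameters to the imaginary part), so its $\nu$-measure is zero. More concretely, the region $\{(\ell,t,h):\ell,h>0,\ 0\le t_i<w_i(\ell),\ \sum_i w_i(\ell)h_i\le 1\}$ has \emph{infinite} Lebesgue volume in these coordinates because of the cusps where some $h_i\to 0$ and $\ell\to\infty$ (or conversely). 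Hence neither a Riemann-sum nor an Ehrhart-type argument applies to the joint $(\ell,h)$-region, and your positivity argument (``the cone \dots\ has strictly positive $\nu$-measure'') fails for the same reason.

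The paper repairs exactly this point by reversing your order of summation: one fixes an admissible tuple of \emph{integer} heights $h=(h_1,\dots,h_k)$ first. For fixed $h$ the constraint $\sum_i w_i(\ell)h_i\le a$ is linear in $\ell$, so the $(\ell,t)$-region is a rational polyhedral cone truncated to a bounded set, and the lattice count there has a genuine volume asymptotic $c(h_1,\dots)\,a^d$. The series $\sum_{h}c(h_1,\dots)$ is then shown to converge (it is what produces the $\zeta$-values in the explicit formulas), dominated by the finite Masur--Veech volume of $\cL_1$; this simultaneously yields existence of the limit and justifies the interchange of $\lim_{a\to\infty}$ with $\sum_h$. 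Positivity is obtained separately by rescaling a single realizing surface to have integer heights and counting rational points in a small neighbourhood of its $\Re$-leaf. Your sketch becomes correct once you reorganize the sum this way.
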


Theorem~\ref{th:sep:diagram:is:essential} is proved in
section~\ref{ss:Invariance:along:Re:and:Im:foliations}.

By a result of J.~Smillie square-tiled surfaces in a stratum
$\cH(m_1,\dots,m_\noz)$ can have from $1$ to $g+\noz-1$ maximal
cylinders, and any value in this range is attained by some
square-tiled surface in the stratum. Since a  finite  union  of
essential  lattice subsets is also an essential lattice  subset and
since  the  collection  of  all separatrix diagrams corresponding
to      a     given     stratum     is     finite,
Theorem~\ref{th:sep:diagram:is:essential}  implies  that the set of
all square-tiled surfaces (respectively pillowcase covers) having
exactly $k$  cylinders  is  an  essential  lattice  subset in any
stratum for any $k$ in $\{1,\dots,g+\noz-1\}$. The corresponding
relative contributions $\prop_k(\cH^{comp}(m_1,\dots,m_\noz))$ of
$k$-cylinder square-tiled surfaces to the Masur--Veech volume of
connected components $\cH^{comp}(m_1,\dots,m_\noz)$ of the strata are
of particular interest to us.

\subsection{Equidistribution in the unit hyperboloid}

Our first result concerns equidistribution of integer points from any
essential lattice subset in the ``unit hyperboloid'' $\cL_1$ for any
linear invariant orbifold $\cL$ defined over $\Q$.

\begin{Theorem}
\label{th:equidistribution}
Let $\cD_\Z$ be a $\Uh$- or $\Uv$-invariant essential lattice subset
in some linear $\GLR$-invariant suborbifold $\cL$ defined over $\Q$
of some stratum of Abelian differentials. Let $d$ be  the  complex
dimension of $\cL$. Let $X_1$ be an open set in the ``unit
hyperboloid'' $\cL_1$.

Then the number
$$
\cN_\cD(X_1,a)=
\card\{S\in \cD_\Z\cap C_{\sqrt{a}} X_1\}
$$
of     square-tiled    surfaces $S$ in
$\cD_\Z$ tiled with $N\le a$ unit squares which project to $X_1$
under the natural projection $\cL\to\cL_1$
asymptotically   depends   only  on
$\cD_\Z$ and on the Masur--Veech measure $\nu_1(X_1)$ of $X_1$:
\begin{equation}
\label{eq:equidistribution1:hyperboloid}
2d\cdot \lim_{a\to+\infty} \frac{\cN_\cD(X_1,a)}{a^d}  =
c(\cD_\Z)\cdot
\frac{\nu_1(X_1)}{\nu_1(\cL_1)}\,,
\end{equation}
where  the  constant
$c(\cD_\Z)$
defined in~\eqref{eq:limit}
depends only on $\cD_\Z$.

Equivalently, for any bounded continuous function $f: \cL_1 \to \R$
one has
\begin{equation}
\label{eq:equidistribution2:hyperboloid}
2d\cdot\lim_{a \to + \infty} \frac{1}{a^d}
\sum_{S \in \cD_{\Z}\cap C_{\sqrt a} X_1}
f\left(\frac{S}{\sqrt{\Area(S)}}\right) =
\frac{c(\cD_\Z)}{\nu_1(\cL_1)} \int_{X_1} f d\nu_1.
\end{equation}
\end{Theorem}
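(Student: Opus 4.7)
The plan is to recast~\eqref{eq:equidistribution1:hyperboloid} as a weak-$*$ convergence statement for counting measures on $\cL$. Introduce the rescaled counting measure
$$
\mu_a := \frac{2d}{a^d}\sum_{S\in\cD_\Z}\delta_{S/\sqrt{a}}\,.
$$
Since scalar multiplication commutes with $\Uh$, the hypothesis that $\cD_\Z$ is $\Uh$-invariant (the $\Uv$ case is symmetric) passes to each $\mu_a$. I would aim to prove that $\mu_a$ converges weak-$*$ on compact subsets of $\cL$ to $\rho\cdot d\nu$ for the constant $\rho = 2d\,c(\cD_\Z)/\nu_1(\cL_1)$. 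Using the identities $\mu_a(C_R X_1) = (2d/a^d)\,\cN_\cD(X_1, R^2 a)$ and $\nu(C_R X_1) = (R^{2d}/2d)\,\nu_1(X_1)$ (the latter from~\eqref{eq:nu1}), such convergence tested against the indicator of $C_1 X_1$ gives~\eqref{eq:equidistribution1:hyperboloid}, and~\eqref{eq:equidistribution2:hyperboloid} then follows from a standard approximation of bounded continuous functions by indicators of Jordan-measurable sets.

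Two basic properties of $\{\mu_a\}$ are routine. \emph{Uniform local mass:} by hypothesis~\eqref{eq:limit}, $\mu_a(C_R\cL_1)\to R^{2d}c(\cD_\Z)$ as $a\to\infty$, so $\{\mu_a\}$ is weak-$*$ precompact on compacta by Banach--Alaoglu. \emph{Scaling homogeneity:} writing $\psi_r(S) = rS$, a direct computation gives $(\psi_r)_*\mu_a = r^{-2d}\mu_{a/r^2}$, which passes to any subsequential weak-$*$ limit $\mu$ to yield $(\psi_r)_*\mu = r^{-2d}\mu$. Hence $\mu$ disintegrates along the radial direction as $\mu = \nu_\mu\otimes(2d\,r^{2d-1}dr)$ for a finite $\Uh$-invariant measure $\nu_\mu$ on $\cL_1$ of total mass $c(\cD_\Z)$. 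The task is thereby reduced to showing $\nu_\mu = c(\cD_\Z)\cdot d\nu_1/\nu_1(\cL_1)$.

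The heart of the argument, and the main anticipated obstacle, is to upgrade the a priori $\Uh$-invariance of $\nu_\mu$ to $\operatorname{P}$-invariance. Once this upgrade is in hand, the classification of finite $\operatorname{P}$-invariant ergodic measures of Eskin--Mirzakhani--Mohammadi~\cite{Eskin:Mirzakhani:Mohammadi} (together with~\cite{Eskin:Mirzakhani} in the quadratic-differentials case) expresses $\nu_\mu$ as a convex combination of affine measures on $\GLR$-invariant subvarieties of $\cL$. A counting comparison with the full integer lattice $\cL_\Z$ then rules out ergodic components on proper subvarieties: any such $\cL'$ has $\dim_\C\cL' < d$, so its integer-point count is $O(a^{\dim_\C\cL'}) = o(a^d)$ and contributes nothing to $\nu_\mu$ in the limit. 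To carry out the upgrade itself I would use that every $S\in\cD_\Z$ with $\Area(S) = N$ rescales to $S/\sqrt N\in\cL_1$, sitting on a periodic $\Uh$-orbit in $\cL_1$ of length $\asymp\sqrt{N}\to\infty$; the equidistribution of long periodic horocycles (a corollary of EMM together with Eskin--Masur-type averaging) forces the averaged empirical measure on these orbits to acquire $\SLR$-invariance as $a\to\infty$, yielding the needed $\operatorname{P}$-invariance of $\nu_\mu$.

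Uniqueness of the weak-$*$ limit promotes subsequential convergence to convergence of the entire family $\{\mu_a\}$, at which point~\eqref{eq:equidistribution1:hyperboloid} holds for any open $X_1\subset\cL_1$ with $\nu_1$-null boundary and~\eqref{eq:equidistribution2:hyperboloid} follows by the approximation argument above. The most delicate step, as emphasized, is the $\operatorname{P}$-invariance upgrade; a conceivable alternative would be to classify directly the possible weak-$*$ limits of counting measures of essential lattice subsets, leveraging simultaneously the rational lattice structure $\cD_\Z\subset H^1(S,\{P_1,\dots,P_\noz\};\Z\oplus i\Z)\cap L$ and the $\Uh$-invariance hypothesis, thereby bypassing the appeal to periodic-horocycle equidistribution.
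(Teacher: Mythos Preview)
Your proof strategy takes an unnecessarily heavy route and has a genuine gap at its acknowledged ``most delicate step.'' The paper's argument is both shorter and avoids the upgrade to $\operatorname{P}$-invariance altogether.

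The key observation you are missing is the \emph{domination} $\mu^{(N,\cD)}\le\mu^{(N)}$, where $\mu^{(N)}$ is the analogous counting measure built from \emph{all} integer points of $\cL$. Since $\mu^{(N)}\to\nu_1$ weakly, this domination forces any subsequential weak limit $\mu_J$ of $\{\mu^{(N,\cD)}\}$ to be absolutely continuous with respect to $\nu_1$. Now one only needs $\Uh$-ergodicity of $\nu_1$ itself, which follows from Moore's ergodicity theorem (Lemma~\ref{lm:Moore}): the $\SLR$-action is ergodic for $\nu_1$ by Eskin--Mirzakhani, and Moore then gives ergodicity for the noncompact closed subgroup $\Uh$. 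With $\nu_1$ being $\Uh$-ergodic, $\mu_J$ being $\Uh$-invariant, and $\mu_J\ll\nu_1$, the Radon--Nikodym derivative $d\mu_J/d\nu_1$ is a $\Uh$-invariant function, hence constant $\nu_1$-a.e., so $\mu_J=k_J\nu_1$. The constant is pinned down by~\eqref{eq:limit}, independently of the subsequence.

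By contrast, your proposed upgrade from $\Uh$-invariance of the limit to $\operatorname{P}$-invariance via ``equidistribution of long periodic horocycles'' is not available as a black box in this setting. Equidistribution of \emph{individual} long closed horocycles in strata is not a corollary of EMM (which concerns $\operatorname{P}$-orbit closures and $\operatorname{P}$-invariant measures, not single $\Uh$-orbits), and invoking it here is close to circular. Even granting the upgrade, you would then need the full EMM measure classification plus a careful exclusion of ergodic components supported on proper subvarieties---vastly more machinery than the two-line absolute-continuity argument requires. Replace your third paragraph with the domination and Moore-ergodicity argument and the proof closes immediately.
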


We start the proof of Theorem~\ref{th:equidistribution} with the
following preparatory Lemma.

\begin{Lemma}
\label{lm:Moore}
Any finite $\operatorname{P}$-invariant ergodic measure $\nu_1$ on
any  stratum  of  Abelian differentials  is ergodic with respect to
the action of the discrete parabolic  subgroup
$\Uh\subset\operatorname{P}$ of  matrices  of  the form
$\begin{pmatrix} 1&n\\0&1\end{pmatrix}$ with $n\in\Z{}$.
\end{Lemma}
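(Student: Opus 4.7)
The plan is to reduce $\Uh$-ergodicity to ergodicity of the \emph{continuous} horocycle subgroup $\{g_t : t\in\R\}$ (the real-parameter analog of $\Uh$) via a Fourier expansion along the continuous horocycle orbit, and then to derive the latter from the Howe--Moore theorem. As the starting point I would invoke the Eskin--Mirzakhani--Mohammadi rigidity theorems cited in the introduction, which upgrade $\operatorname{P}$-invariance and ergodicity of the finite measure $\nu_1$ to full $\SLR$-invariance and $\SLR$-ergodicity, so that Howe--Moore applies to the unitary representation $\pi$ of $\SLR$ on $L^2(\nu_1)$.

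Let $f\in L^2(\nu_1)$ be $\Uh$-invariant. Since $\nu_1$ is invariant under the continuous horocycle, the orbit map $\phi\colon t\mapsto \pi(g_t)f$ is a strongly continuous, $\Z$-periodic function $\R\to L^2(\nu_1)$, and hence decomposes as a Fourier series
\[
\phi(t)=\sum_{k\in\Z}e^{2\pi i k t}\,c_k,\qquad c_k:=\int_0^1 e^{-2\pi i k t}\phi(t)\,dt,
\]
with $f=\phi(0)=\sum_{k}c_k$. A direct change of variable (using $\Z$-periodicity of $\phi$) shows that each coefficient is an eigenvector of the entire continuous horocycle: $\pi(g_s)c_k=e^{2\pi i k s}c_k$ for all $s\in\R$.

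For $k=0$ the coefficient $c_0$ is invariant under the non-compact subgroup $\{g_s:s\in\R\}\subset\SLR$, and Howe--Moore, applied to $L^2(\nu_1)\ominus\C$, forces $c_0$ to be constant. For $k\ne 0$ I would exploit the conjugation identity $a_r g_t a_{-r}=g_{e^{2r}t}$, where $a_r=\mathrm{diag}(e^r,e^{-r})$, to compute
\[
\pi(g_t)\pi(a_r)c_k \;=\; \pi(a_r)\pi(g_{e^{-2r}t})c_k \;=\; e^{2\pi i k e^{-2r}t}\,\pi(a_r)c_k.
\]
Thus $\pi(a_r)c_k$ is an eigenvector of the unitary family $\{\pi(g_t)\}_{t\in\R}$ for a character strictly different from that of $c_k$ as soon as $r\ne 0$ (this is where $k\ne 0$ is used), so $\pi(a_r)c_k\perp c_k$ for every $r\ne 0$. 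Strong continuity of the $A$-action on $L^2(\nu_1)$ (which uses only $A$-invariance of $\nu_1$, itself a consequence of $\operatorname{P}$-invariance) forces $\langle\pi(a_r)c_k,c_k\rangle\to\|c_k\|^2$ as $r\to 0$, whence $c_k=0$. Therefore $f=c_0$ is constant, which is the desired $\Uh$-ergodicity.

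The main obstacle is the $k=0$ mode. Killing it requires the full $\SLR$-invariance of $\nu_1$ (via EMM) together with Howe--Moore, since $\operatorname{P}$-ergodicity alone is not sufficient to force a general $L^2$-function invariant under the continuous horocycle to be constant (for instance, one would need to rule out a nontrivial $A$-action on the space of continuous-horocycle-invariants). The handling of the $k\ne 0$ modes, by contrast, is soft: it is a Mautner-type orthogonality contradiction using only the semidirect structure $\operatorname{P}=A\ltimes\{g_s:s\in\R\}$ and unitarity of $\pi(a_r)$.
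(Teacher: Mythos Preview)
Your argument is correct. The paper, however, takes a much shorter route: after invoking the Eskin--Mirzakhani theorem to upgrade the $\operatorname{P}$-ergodic finite measure to an $\SLR$-ergodic one (exactly as you do), it applies Moore's ergodicity theorem (Theorem~2.2.15 in Zimmer) \emph{directly} to the closed non-compact subgroup $\Uh\subset\SLR$, obtaining $\Uh$-ergodicity in one stroke. Your approach instead unpacks the passage from the continuous horocycle to the discrete one by hand: the Fourier decomposition separates a ``continuous-horocycle-invariant'' mode $c_0$ from genuinely oscillating modes $c_k$, $k\neq 0$; Howe--Moore (morally the same input as Moore's theorem, applied now only to the \emph{continuous} horocycle) kills the former, while your Mautner-type orthogonality trick kills the latter using nothing beyond the $AN$-structure of $\operatorname{P}$ and unitarity. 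The paper's route buys brevity and avoids the Fourier/Mautner machinery entirely; your route buys transparency about exactly where the discrete-versus-continuous difficulty lies and shows that the $k\neq 0$ part is soft, with all the depth concentrated in the $k=0$ mode. Both proofs share the same essential external inputs (EMM rigidity plus a Moore/Howe--Moore type statement for $\SLR$), so neither is more elementary in any substantive sense.
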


\begin{proof}
By   the  fundamental  Theorem  of A.~Eskin and
M.~Mirzakhani~\cite{Eskin:Mirzakhani} any finite
$\operatorname{P}$-invariant ergodic measure is ergodic with respect
to the $\SLR$-action.

Let $\operatorname{G}$  be  a simple  Lie  group,  $\operatorname{H}$
be a closed non-compact subgroup of $\operatorname{G}$ and let
$\operatorname{G}$-action be ergodic with respect to a finite
invariant measure. By a particular case of Moore's  Ergodicity
theorem (Theorem~2.2.15 in~\cite{Zimmer}) the
$\operatorname{H}$-action is also ergodic.

In our case the simple Lie group is $\SLR$ and the closed non-compact
subgroup $\operatorname{H}$ is $\Uh$.
\end{proof}

\begin{Remark}
Note that in the general statement of Moore's Ergodic Theorem, the
group $\operatorname{G}$ is a finite product of simple Lie groups
with finite center, and the ergodic $\operatorname{G}$-action is
supposed to be \textit{irreducible} (see Theorem~2.2.15 and
Definition~2.2.11 in~\cite{Zimmer}). However, for a \textit{simple}
Lie group $\operatorname{G}$ the requirement of irreducibility of the
action is satisfied automatically; see the remark after
Definition~2.2.11 in~\cite{Zimmer}.
\end{Remark}

\begin{proof}[Proof of Theorem~\ref{th:equidistribution}]
The     proof     mimics     the     proof     of     Theorem     6.4
in~\cite{Mirzakhani:growth:of:simple:geodesics}.

Fix    a   connected $\GLR$-invariant orbifold $\cL$ defined over
$\Q$. Any \textit{essential  lattice  subset}  $\cD_\Z$  defines  a
sequence of measures  $\mu^{(N,\cD)}$  on  $\cL_1$,  where $N\in\N$.
Namely, we put Dirac masses to all points represented by square-tiled
surfaces tiled with  at  most  $N$  unit  squares which  belong  to
$\cD_\Z$. Then we project  these  points from $\cL$ to $\cL_1$ by the
natural  projection,  and  normalize  the resulting measure    by
$2d\cdot N^{-d}$,   where   $d=\dim_{\C}\cL$.

Taking  \textit{all} square-tiled surfaces in $\cL$, and not only
those which belong to the subset  $\cD_\Z$, we  get  a  sequence of
measures which we denote by $\mu^{(N)}$  and  which  weakly  converges
to  our  canonical  invariant Masur--Veech  measure  $\nu_1$ on
$\cL_1$, see~\eqref{eq:nu1}.

By  definition,  for  any  essential  lattice subset $\cD_\Z$ we have
$\mu^{(N,\cD)}\le \mu^{(N)}$ for we take \textit{only part} of square-tiled
surfaces  of area at most $N$ to  define $\mu^{(N,\cD)}$ while we take
\textit{all} square-tiled  surface  of  area  at most $N$ to  define
$\mu^{(N)}$.  Since  the normalization factor $2d\cdot N^{-d}$ is the
same in both cases, we get the desired   inequality.   This   implies
that   for   any  open  ball $X\subset\cL_1$ we have
$$
\limsup_{N\to+\infty} \mu^{(N,\cD)}(X)\le
\limsup_{N\to+\infty} \mu^{(N)}(X) = \nu_1(X) < +\infty\,.
$$
and any   subsequence   of   $\{\mu^{(N,\cD)}\}$  contains
a weakly converging subsequence.

The  inequality  $\mu^{(N,\cD)}\le  \mu^{(N)}$
implies that any weak limit
$\mu_{J}$  of  $\{\mu^{(N,\cD)}\}_{N\in\N}$ is in the same Lebesgue class
as  the  Masur--Veech  measure  $\nu_1$,  that  is  for  any measurable
$V\subset\cL_1$ with $\nu_1(V)=0$, we have $\mu_{J}(V)=0$.

The  $\Uh$-invariance  of the essential lattice subset implies that
all measures  $\mu^{(N,\cD)}$ are $\Uh$-invariant. Hence, the measure
$\mu_{J}$ is also $\Uh$-invariant.

By Lemma~\ref{lm:Moore} the Masur--Veech measure $\nu_1$
is ergodic with respect to the action of $\Uh$.
Ergodicity  of $\nu_1$ with respect to the action of
$\Uh$,
invariance of $\mu_{J}$  under the action of
$\Uh$, and the fact that
$\mu_{J}$ is in the Lebesgue  measure  class  of  $\nu_1$  all
together imply that the two measures are proportional:
$$
\mu_{J} = k_J\cdot \nu_1\,.
$$

Finally,  equation~\eqref{eq:limit}  implies  that the coefficient of
proportionality  $k_J$  does  not  depend  on the subsequence $J$: it
equals  the  ratio $\cfrac{c(\cD_\Z)}{\nu_1(\cL_1)}$, where
$c(\cD_\Z)$ is the limit in~\eqref{eq:limit}.
Theorem~\ref{th:equidistribution} is proved.
\end{proof}

\subsection{Equidistribution in a linear invariant suborbifold defined over $\Q$}

In certain situations it is convenient to work with subsets $X$ of
the entire linear $\GLR$-invariant suborbifold $\cL$ defined over
$\Q$ instead of the subsets of the unit hyperboloid $\cL_1$. The
statement below provides a version of our equidistribution result
applied to spatial versus hypersurface domains.

Define $\cD_{\epsilon\Z}$ as the image of $\cD_\Z$ under the action of
the uniform contraction with the scaling factor $\epsilon$, namely
$$
\cD_{\epsilon\Z}:=
\begin{pmatrix}\epsilon&0\\0&\epsilon\end{pmatrix}\cD_\Z
$$
Given $X \subset \cL$ we can either apply a homothety
$\begin{pmatrix}\epsilon^{-1}&0\\0&\epsilon^{-1}\end{pmatrix}$
to $X$ and consider the intersection with an essential lattice $\cD_Z$.
Or, equivalently, intersect the original set $X$ with
the rescaled essential lattice subset $\cD_{\epsilon\Z}$.

\begin{Theorem}
\label{th:equidistribution:spatial}
Let $\cD_\Z$ be a $\Uh$- or $\Uv$-invariant essential lattice subset
in some linear $\GLR$-invariant suborbifold $\cL$ defined over $\Q$
of complex dimension $d$. Let $X$ be an open subset in $\cL$.
Then the number
$$
\widetilde\cN_\cD(X,\epsilon)=
\card\{S\in\cD_{\epsilon\Z}\cap X\}
$$
of     square-tiled    surfaces $S$ in $X$
tiled with tiny
$\epsilon\times\epsilon$-squares
that belong to $\cD_\Z$ after rescaling by $\frac{1}{\epsilon}$,
asymptotically   depends   only  on
$\cD_\Z$ and on the Masur--Veech volume $\nu(X)$ of $X$:
\begin{equation}
\label{eq:equidistribution1:spatial}
\lim_{\epsilon\to+0} \epsilon^{2d}\cdot \widetilde\cN_\cD(X,\epsilon)  =
\frac{c(\cD_\Z)}{\nu_1(\cL_1)}\cdot{\nu(X)}\,,
\end{equation}
where  the  constant
$c(\cD_\Z)$
defined in~\eqref{eq:limit}
depends only on $\cD_\Z$.

Equivalently, for any compactly supported or positive continuous function
$f: \cL \to \R$ one has:
\begin{equation}
\label{eq:equidistribution2:spatial}
\lim_{\epsilon \to +0} \epsilon^{2d}
\sum_{S \in \cD_{\epsilon\Z}\cap X}
f(S) = \frac{c(\cD_\Z)}{\nu_1(\cL_1)} \int_{X} f d\nu.
\end{equation}
\end{Theorem}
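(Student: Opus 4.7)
The plan is to derive the spatial equidistribution from the hyperboloid version Theorem~\ref{th:equidistribution} by slicing $\cL\setminus\{0\}$ along the polar diffeomorphism $\Phi\colon(0,\infty)\times\cL_1\to\cL\setminus\{0\}$, $(r,S_1)\mapsto r\cdot S_1$, and then passing from finite disjoint unions of annular cones to arbitrary open sets by a Riemann-type sandwich argument. The bridge between the spatial and the hyperboloid counts is the identity
$$
\widetilde\cN_\cD(C_R X_1,\epsilon)=\cN_\cD\bigl(X_1,\,R^2/\epsilon^2\bigr),
$$
valid for any open $X_1\subset\cL_1$ and $R>0$; it follows from the fact that the rescaled surface $\epsilon S_0$ has area $\epsilon^2\Area(S_0)$ and the same $\cL_1$-projection as $S_0$.

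Plugging this identity into~\eqref{eq:equidistribution1:hyperboloid} and combining it with definition~\eqref{eq:nu1} and the homogeneity $\nu(C_R X_1)=R^{2d}\nu(C_1 X_1)$ gives
$$
\epsilon^{2d}\,\widetilde\cN_\cD(C_R X_1,\epsilon)\;\longrightarrow\;\frac{c(\cD_\Z)}{\nu_1(\cL_1)}\,\nu(C_R X_1),
$$
and by subtraction the same asymptotic holds for every finite disjoint union of annular cones $C_{[R_1,R_2]}X_1:=C_{R_2}X_1\setminus C_{R_1}X_1$. A direct computation from~\eqref{eq:nu1} yields $\Phi^{*}\,d\nu=r^{2d-1}\,dr\otimes d\nu_1$. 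Given a relatively compact open $X\subset\cL\setminus\{0\}$ and $\delta>0$, Radon regularity of this product measure produces finite disjoint unions of product boxes $A_{\text{in}}\subset\Phi^{-1}(X)\subset A_{\text{out}}$ with $(\Phi^{*}\nu)(A_{\text{out}}\setminus A_{\text{in}})<\delta$. Monotonicity of the set-function $\widetilde\cN_\cD(\cdot,\epsilon)$ and the preceding step give
$$
\frac{c(\cD_\Z)}{\nu_1(\cL_1)}\,\nu(\Phi(A_{\text{in}}))\le\liminf_{\epsilon\to 0}\epsilon^{2d}\widetilde\cN_\cD(X,\epsilon)\le\limsup_{\epsilon\to 0}\epsilon^{2d}\widetilde\cN_\cD(X,\epsilon)\le\frac{c(\cD_\Z)}{\nu_1(\cL_1)}\,\nu(\Phi(A_{\text{out}})),
$$
and letting $\delta\to 0$ yields~\eqref{eq:equidistribution1:spatial}. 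An arbitrary open $X$ is handled by exhaustion with relatively compact opens.

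The continuous-function form~\eqref{eq:equidistribution2:spatial} follows by uniform approximation of a compactly supported $f$ by linear combinations of indicators of annular cones (Riemann sums along $\Phi$), and for merely positive continuous $f$ by monotone truncation $g_n\nearrow f$ combined with Fatou's lemma on the discrete sum and monotone convergence on the integral. The only genuine technical point is the Radon sandwich in the passage from cones to arbitrary open sets: since $\widetilde\cN_\cD(\cdot,\epsilon)$ is set-monotone and the cone case already supplies the sharp asymptotic for every finite union of annular cones, one need not control $\partial X$, and the argument reduces to standard outer and inner approximation of open subsets of $(0,\infty)\times\cL_1$ by finite unions of product boxes.
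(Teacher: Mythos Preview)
Your approach is essentially identical to the paper's: both derive the spatial statement from Theorem~\ref{th:equidistribution} via the bridge identity $\widetilde\cN_\cD(C_R X_1,\epsilon)=\cN_\cD(X_1,R^2/\epsilon^2)$, extend to annular cones (the paper's ``hyperbolic trapezoids'') by subtraction, and then pass to general open $X$ by a Riemann-type inner/outer sandwich; you simply supply more detail (the polar pushforward $\Phi^{*}d\nu=r^{2d-1}\,dr\,d\nu_1$, exhaustion by relatively compact opens, and a sketch of the functional form~\eqref{eq:equidistribution2:spatial}, which the paper does not spell out).

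One caveat: your explicit claim that ``one need not control $\partial X$'' is not quite right. Outer approximation of a relatively compact open $X$ by a finite union of boxes $A_{\mathrm{out}}\supset X$ with $\nu(A_{\mathrm{out}})$ arbitrarily close to $\nu(X)$ genuinely requires $\nu(\partial X)=0$; in general the best one obtains from compactness of $\overline X$ is $\nu(A_{\mathrm{out}})$ close to $\nu(\overline X)$, which leaves a gap in the $\limsup$ bound when $\nu(\partial X)>0$. The paper's proof glosses over exactly the same point, so this is a shared informality rather than a divergence from its argument. (A related looseness appears in your treatment of positive non-compactly-supported $f$: the Fatou/monotone-convergence step you describe yields only the $\liminf$ inequality, which suffices when $\int_X f\,d\nu=\infty$ but not obviously when the integral is finite.)
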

The statements of Theorem~\ref{th:equidistribution} and
Theorem~\ref{th:equidistribution:spatial} are rather different for
the simple reason that $\nu_1$ has finite mass while $\nu$ has not.
In particular, in the version above the quantities in the
equations~\eqref{eq:equidistribution1:spatial}
and~\eqref{eq:equidistribution2:spatial} might can be infinite (on both
sides of the equality).

\begin{proof}
The proof is a consequence of the previous Equidistribution
Theorem (Theorem~\ref{th:equidistribution}). Note that the number $\cN_\cD(X_1,a)$
    introduced in the
statement of Theorem~\ref{th:equidistribution}
and the number $\widetilde\cN_\cD(X,\epsilon)$ for $X=C_1 X_1$ and
for $\epsilon=\frac{1}{\sqrt a}$
used in the statement of Theorem~\ref{th:equidistribution:spatial}
coincide:
$$
\cN_\cD(X_1,a)=
\card\{S\in \cD_\Z\cap C_{\sqrt{a}} X_1\}=
\card\{S\in \cD_{\frac{1}{\sqrt a}\Z}\cap C_1 X_1\}=
\widetilde\cN_\cD(C_1 X_1,\frac{1}{\sqrt a})\,.
$$

\begin{figure}[htb]

\includegraphics{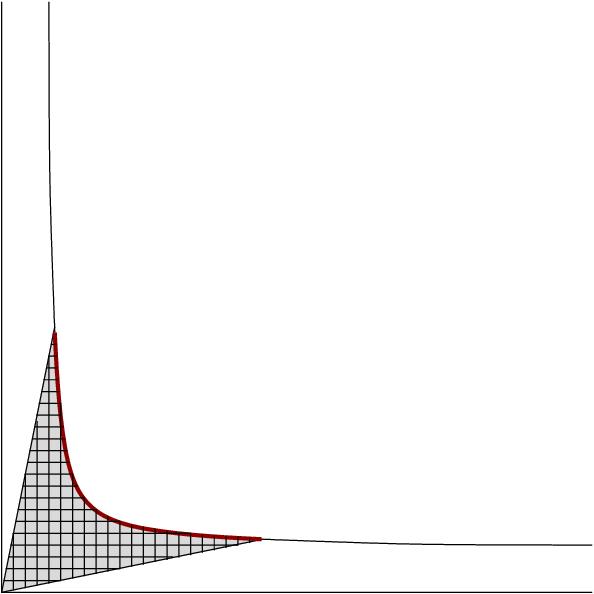}

\includegraphics{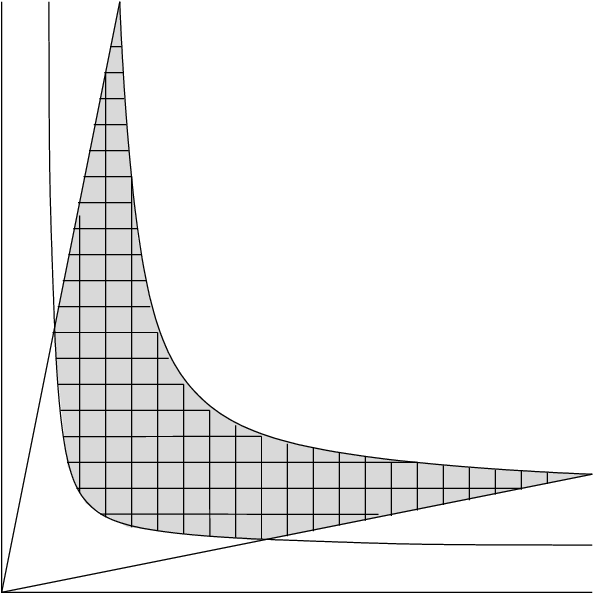}

\vspace{70pt}

\caption{
\label{fig:trapezoid}
Cone based on the ``unit hyperboloid'' on the left
and part of the cone confined between two ``hyperboloids''
on the right.
}
\end{figure}

Hence the particular case of
Theorem~\ref{th:equidistribution:spatial} when $X$ has the form of a
cone $X= C_1 X_1$ based on an open subset of the ``unit hyperboloid''
(see the left picture in Figure~\ref{fig:trapezoid}) is already
proved. By homogeneity, the latter statement is also valid for any
cone $X= C_r X_1$ with any $r>0$. Hence it is also valid for any
complement $X= C_R X_1- C_r X_1$ for any $R>r>0$ (see the right
picture in Figure~\ref{fig:trapezoid}). Following the lines of
Riemann integration, for any set $X$ as in the statement of
Theorem~\ref{th:equidistribution:spatial} we can find a pair of
finite collections of such ``hyperbolic trapezoids'' such that the
trapezoids would be pairwise disjoint in each collection; $X$ would
be a subset of the union of the trapezoids in the first collection;
the union of trapezoids in the second collection would be a subset of
$X$; the difference between two unions of trapezoids would have
arbitrary small measure. This proves
Theorem~\ref{th:equidistribution:spatial}.
\end{proof}

\subsection{Invariance along $\Re$ and $\Im$ foliations}
\label{ss:Invariance:along:Re:and:Im:foliations}

Note that any stratum of Abelian differentials
of complex dimension $d$
is endowed with a pair of transverse foliations
of real dimension $d$ induced from the canonical
direct sum decomposition in period coordinates
$$
H^1(S,\{P_1,\dots,P_\noz\};\C)=
H^1(S,\{P_1,\dots,P_\noz\};\R)\oplus
H^1(S,\{P_1,\dots,P_\noz\};i\R)\,.
$$
In particular, in a neighborhood of any point $(C,\omega)$ of the
stratum one has canonical direct product structure in period
coordinates. Locally, leafs of the $\Im$-foliation are preimages of
points under projection to the first summand, and of the
$\Re$-foliation --- to the second. In other words, pairs $(C,\omega)$
in a leaf of the $\Im$-foliation (respectively of the
$\Re$-foliation) share the cohomology class $[\Re \omega]\in
H^1(S,\{P_1,\dots,P_\noz\};\R)$ (respectively $[\Im \omega]\in
H^1(S,\{P_1,\dots,P_\noz\};\R)$).

By a fundamental theorem of Eskin, Mirzakhani and Mohammadi,
any linear $\GLR$-invariant suborbifold $\cL$ is represented in
period coordinates by a complexification of a real linear subspace
$L$ in the real relative cohomology. Thus, any such suborbifold also has
a direct product structure as above and is endowed with analogous
$\Re$ and $\Im$-foliations. They have real dimension $d$ where $d=\dim_{\R} L=
\dim_{\C}\cL$, and their leaves are intersections of the leaves of
$\Re$ and $\Im$-foliations in the ambient stratum with $\cL$.
If $\cL$ is locally represented in period
coordinates as a finite union of several linear subspaces, every such
subspace is foliated by $\Re$ and $\Im$-foliations.

\begin{Theorem}
\label{th:Re:Im:invariance}
Assume that the vertical foliation of some flat surface $S_0$ is
periodic. Let $\cD$ be its separatrix diagram. Then the vertical
foliation of any flat surface $S$ in the leaf of the $\Im$-foliation
passing through $S_0$ is also periodic and has separatrix diagram
$\cD$.

Similarly, if some flat surface $S_0$ has periodic horizontal
foliation, then all other flat surfaces in the same leaf of the
$\Re$-foliation also have periodic horizontal foliation with the same
separatrix diagram.
\end{Theorem}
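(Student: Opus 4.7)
The plan is to prove the first statement; the second is symmetric (swap $\Re\leftrightarrow\Im$ and horizontal $\leftrightarrow$ vertical). I split the argument into a local analysis in period coordinates near $S_0$ and a global extension along the connected leaf. At $S_0=(C_0,\omega_0)$ the periodic vertical foliation decomposes the surface into maximal vertical cylinders whose core curves $\gamma_1,\dots,\gamma_k$ and boundary vertical saddle connections $\sigma_1,\dots,\sigma_s$ lie in $H_1(S,\{P_1,\dots,P_\noz\};\Z)$. Being vertical means
$$\int_{\gamma_j}\Re\omega_0=0=\int_{\sigma_i}\Re\omega_0,$$
while the heights $h_j=\Im\!\int_{\gamma_j}\omega_0$ are strictly positive; the combinatorial pattern of which $\sigma_i$ bound which cylinder and in what cyclic order is precisely $\cD$.

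For the local step I use period coordinates on $\cL$ centered at $S_0$. A point $S=(C,\omega)\in\cL$ near $S_0$ belongs to the $\Im$-leaf iff $[\Re\omega]=[\Re\omega_0]$ in $H^1(S,\{P_1,\dots,P_\noz\};\R)$, where cohomology groups are canonically identified within the chart. In particular, the equations $\int_{\gamma_j}\Re\omega=0=\int_{\sigma_i}\Re\omega$ remain valid on $S$, so the transported cycles $\gamma_j,\sigma_i$ stay vertical on $S$. Their imaginary parts---the heights and saddle-connection lengths---vary continuously in $S$ and hence stay strictly positive in a neighborhood of $S_0$. Standard stability of cylinder decompositions under small perturbations within a stratum then realizes the $\gamma_j$ as closed vertical geodesics and the $\sigma_i$ as vertical saddle connections on $S$, reassembled with the same combinatorial pattern. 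Thus $S$ has separatrix diagram $\cD$.

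For the global step I let $U$ be the subset of the connected component of the $\Im$-leaf through $S_0$ consisting of surfaces with separatrix diagram $\cD$. The local step shows that $U$ is open and contains $S_0$; to conclude $U$ fills the whole component it suffices to show $U$ is closed in the leaf. Suppose $S_n\in U$ converges to $S_\infty$ in the leaf. Continuity of periods forces $h_j(S_\infty)\ge 0$ and $\ell_i(S_\infty)\ge 0$ (with $\ell_i$ denoting the length of $\sigma_i$), and the decisive point is to exclude equality. A vanishing height or saddle-connection length at $S_\infty$ would force a homologically nontrivial vertical cycle to have vanishing $\omega_{S_\infty}$-period, which geometrically corresponds either to the collapse of a waist curve (nodal pinch) or to the merger of two zeros along a vanishing saddle connection (change of singularity type); in either case $S_\infty$ would leave the ambient stratum and hence $\cL$, contradicting $S_\infty\in\cL$. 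Hence all heights and lengths remain positive, the local step applies at $S_\infty$, and $S_\infty\in U$; by connectedness of the leaf, $U$ fills the whole component.

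The main obstacle is the no-collapse claim in the closedness step. Making it rigorous requires a case analysis of how a vanishing cycle can limit geometrically: one must check that in every combinatorial possibility the limiting surface either acquires a node or has coinciding zeros, hence fails to be a smooth flat surface of the same discrete type as the ambient stratum.
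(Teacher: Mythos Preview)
Your local step and the paper's are close in spirit but differ in execution. You argue by persistence: on a nearby $S$ in the $\Im$-leaf the transported cycles stay vertical (since $[\Re\omega]$ is fixed), lengths stay positive by continuity, and ``standard stability of cylinder decompositions'' rebuilds the decomposition on $S$. The paper argues constructively: using that the saddle connections together with one crossing segment per cylinder span relative homology (its Lemma~\ref{lem:span:hom}), it identifies the decomposition parameters $(\ell_j,\phi_k,h_k)$ with period coordinates, deforms only the $\Re$-periods $\ell_j,\phi_k$, and explicitly glues a new surface from cylinders with the deformed parameters; this surface manifestly has diagram $\cD$ and lies in the same leaf. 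The constructive route is more self-contained than an appeal to an unstated stability principle.

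On the global step you are more explicit than the paper, which after the local argument simply writes ``This implies the statement of the Theorem.'' Your open--closed strategy is the right completion, and the gap you flag in closedness does not require a case analysis: if $S_n\to S_\infty$ in the stratum while some $\ell_i(S_n)\to 0$, then $S_n$ carries an actual saddle connection of that length, so the systole of $S_n$ tends to $0$; but the shortest saddle-connection length is a positive continuous function on the stratum, contradicting $S_n\to S_\infty$.

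One terminological slip: for a vertical cylinder with core curve $\gamma_j$, the quantity $\Im\!\int_{\gamma_j}\omega$ is the \emph{circumference}, not the height; the height (horizontal width) is a $\Re$-period of a crossing cycle and is therefore fixed along the $\Im$-leaf. This does not affect your argument.
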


Let us emphasize that by definition a leaf is always
connected. Note that if one considers
a $\GLR$-invariant suborbifold
$\cL \subset \cH$ then the intersection of $\cL$ with a leaf of
the $\Re$-foliation of the stratum $\cH$ might be non-connected.
The leaves of the $\Re$-foliations on $\cL$ are precisely the
connected components of these intersections.

The two parts of the statement are completely symmetric, so
it is sufficient to prove the second one, where the horizontal
foliation of $S_0$ is periodic. Its proof will use the following
simple Lemma.

\begin{Lemma}
\label{lem:span:hom}
Let $S$ be a translation surface with periodic
horizontal foliation. Consider the collection of all horizontal
saddle connections completed for each cylinder by a choice of a
non-horizontal segment inside the cylinder joining
some pair of singularities on the two boundary components of the
cylinder. Viewed as a collection of relative homology cycles,
such collection of saddle connections
spans the entire relative homology group
$H_1(S,\{P_1,\dots,P_\noz\};\Z)$.
\end{Lemma}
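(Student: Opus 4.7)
The plan is to realize the given collection of curves as the $1$-skeleton of a CW decomposition of $S$ relative to the set of singularities, and then read off the generation statement directly from the relative cellular chain complex.

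First, I build an explicit CW structure on $S$. The $0$-skeleton is the finite set $A = \{P_1, \dots, P_\noz\}$. The $1$-cells are of two types: (a) every horizontal saddle connection, so that the union of all cylinder boundary components is cut into arcs with endpoints in $A$; and (b) for each maximal horizontal cylinder, the one chosen non-horizontal arc joining a singularity on its bottom boundary to a singularity on its top boundary. The $2$-cells are the horizontal cylinders, each cut open along its chosen cross-segment. The geometric point here is that a flat cylinder $[0,h] \times (\R/w\Z)$ cut along a simple transverse arc from one boundary component to the other is a topological disk. Gluing these disks back along their horizontal boundaries (via the horizontal saddle connections) and along the two sides of each cross-segment recovers $S$, so the data above really is a CW decomposition.

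Next, I pass to the relative cellular chain complex $C_*(S, A)$. Since $A$ consists only of $0$-cells, $C_0(S, A) = 0$, while $C_1(S, A)$ is the free abelian group on the horizontal saddle connections and cross-segments, and $C_2(S, A)$ is free on the cut-open cylinders. Hence the boundary $\partial_1 \colon C_1(S,A) \to C_0(S,A)$ is forced to vanish, every $1$-chain is automatically a relative cycle, and
\[
H_1\bigl(S, \{P_1,\dots,P_\noz\}; \Z\bigr) \;=\; C_1(S,A)\big/\partial_2\bigl(C_2(S,A)\bigr).
\]
In particular the $1$-cells themselves — the horizontal saddle connections together with the chosen cross-segments — generate $H_1(S, \{P_1,\dots,P_\noz\}; \Z)$, which is exactly the claim.

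The only point requiring any verification is Step 1, that the described data give a bona fide CW structure; the heart of that verification is the elementary topological fact that a flat cylinder becomes a disk after a single cross-cut. Beyond this, the argument is a clean application of cellular homology, so I do not anticipate any serious obstacle.
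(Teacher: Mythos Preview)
Your proof is correct and follows essentially the same approach as the paper: both realize the given collection of segments as the $1$-skeleton of a CW decomposition of $S$ (with $0$-cells contained in $\{P_1,\dots,P_\noz\}$ and $2$-cells the cylinders slit along their cross-segments), from which generation of $H_1(S,\{P_1,\dots,P_\noz\};\Z)$ follows. The paper's version is terser, simply noting that the complement of the segments is a disjoint union of disks and leaving the cellular-homology conclusion implicit, whereas you spell out the relative cellular chain complex explicitly.
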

\begin{proof}
The complement to the union of our segments is a disjoint union of
topological discs. Thus, the collection of segments as above
defines a $1$-skeleton of a $CW$ decomposition of $S$, where all $0$-cells
belong to the finite set $\{P_1,\dots,P_\noz\}$.
\end{proof}

\begin{proof}[Proof of Theorem~\ref{th:Re:Im:invariance}] Denote the
lengths of the horizontal saddle connections of $S_0$ by $\ell_j$,
the lengths of the waist curves of the cylinders by $w_k$, the
heights of the cylinders by $h_k$ and the ``twists'' (horizontal
projections of the oriented non-horizontal segments crossing the
cylinders) by $\phi_k$. Denote by $\omega_0$ the Abelian differential
representing the initial flat surface $S_0$. Note that the numbers $\ell_j$
and $\phi_k$ are the relative periods of the relative cohomology
class $[\Re\omega_0]\in H^1(S,\{P_1,\dots,P_\noz\};\R)$ and the
numbers $h_k$ are the relative periods of $[\Im\omega_0]\in
H^1(S,\{P_1,\dots,P_\noz\};\R)$. The relative periods of
$[\Im\omega_0]$ corresponding to the horizontal saddle connections
are equal to zero.

Now, change the cohomology class $[\Re\omega_0]\in
H^1(S,\{P_1,\dots,P_\noz\};\R)$ by a small deformation and keep the
cohomology class $[\Im\omega_0]$ fixed. It would deform periods
$\ell_j, \phi_k$. The deformed parameters $\ell^{\mathit{def}}_j,
\phi^{\mathit{def}}_k, h_k$ represent the cylinder decomposition
of a well-defined translation surface $S^{\mathit{def}}$ with
periodic horizontal foliation. Indeed, one can glue a translation
surface living in the original stratum from the collection of deformed horizontal cylinders
corresponding to parameters $\ell^{\mathit{def}}_j,
\phi^{\mathit{def}}_k, h_k$
if and only if the following two conditions are satisfied:
all $\ell^{\mathit{def}}_j$ are strictly positive
and the lengths of the waist curves $w_k^{\mathit{def}}$ of the
deformed cylinders satisfy linear relations imposed by the
combinatorics of the diagram $\cD$. Since these relations are reduced
to relations in integer cohomology cocycles, and since we performed
the deformation of relative periods $\ell_j$ inside the relative
cohomology $H^1(S,\{P_1,\dots,P_\noz\};\R)$, all such relations are
automatically satisfied.

By construction any deformed translation surface $S^{\mathit{def}}$
belongs to the leaf of the $\Re$-foliation passing through $S_0$ and
represents the separatrix diagram $\cD$. Considering deformations in a
small open neighborhood of $[\Re\omega_0]$ in
$H^1(S,\{P_1,\dots,P_\noz\};\R)$ we get an entire small open
neighborhood of $S_0$ in the $\Re$-leaf passing through $S_0$. This
implies the statement of the Theorem.
\end{proof}

Now we are ready to prove Theorem~\ref{th:sep:diagram:is:essential}.

\begin{proof}[Proof of Theorem~\ref{th:sep:diagram:is:essential}]
By assumption, the suborbifold $\cL$ is $\GLR$-invariant,
in particular, $\Uh$-invariant. The set of all square-tiled
surfaces (pillowcase covers) in the ambient stratum represented by
any given separatrix diagram $\cD$ is $\Uh$-invariant. Thus, the
condition  a) of Definition~\ref{def:essential:lattice:subset} is
respected for any $\GLR$-invariant suborbifold $\cL$ and for any
separatrix diagram $\cD$.

We have to show now that the limit in ``b'' exists, that it is
finite, and that it is strictly positive. (Basically, what we have to
prove is that the contribution  of  any  realizable separatrix
diagram to the Masur--Veech volume of any the ambient
$\GLR$-invariant suborbifold $\cL$ defined over $\Q$ is
well-defined and strictly positive.)

Applying if necessary the canonical orienting
double cover construction $\hat S\to S$ to every flat surface $S$ in
$\cL$, we can restrict the proof to the case of a suborbifold in a
stratum of Abelian differentials: the two counts might differ only by
some finite strictly positive normalization factor.

First note that
\begin{equation}
\label{eq:sup}
2d\cdot \limsup_{a\to+\infty} \frac{1}{a^{d}} \cdot
\card\{S\in \cD_\Z(\cL)\,\big|\, \Area(S) \le a\} <+\infty\,.
\end{equation}
This property is a trivial corollary of the nontrivial theorems of
A.~Eskin, M.~Mirzakhani and A.~Mohammadi~\cite{Eskin:Mirzakhani},
\cite{Eskin:Mirzakhani:Mohammadi} telling that the Masur--Veech
volume of $\cL$ is finite. By definition the Masur--Veech volume
$\Vol(\cL)$ is defined as the analogous limit where instead of
$\cD_\Z(\cL)$ we take the set of all integer points in $\cL$. Since
$\cD_\Z(\cL)$ forms a subset of the set of all integer points, the
relation above follows.

Let us prove the relation
\begin{equation}
\label{eq:inf}
2d\cdot \liminf_{a\to+\infty} \frac{1}{a^{d}} \cdot
\card\{S\in \cD_\Z(\cL)\,\big|\, \Area(S) \le a\} >0\,.
\end{equation}
We use notation $\ell_j, \phi_k, w_k, h_k$ for parameters of a
cylinder decomposition as in the proof of
Theorem~\ref{th:Re:Im:invariance}. Let $S_0\in\cL$ be a translation
surface with periodic horizontal foliation realizing the separatrix
diagram $\cD$. By assumption, the $\GLR$-invariant suborbifold $\cL$
is defined over $\Q$ which implies that rescaling the flat surface
$S_0$ in vertical direction by an appropriate linear transformation
$\begin{pmatrix}1& 0\\ 0&R\end{pmatrix}$ we can make all vertical
parameters $h_j$ integer. We will still denote the resulting surface
in $\cL$ by $S_0$, and the Abelian differential representing it by
$\omega_0$.

Consider a small open neighborhood $U(S_0)$ of $S_0$ in the leaf of
the $\Re$-foliation in $\cL$ passing through $S_0$. Denote by $A$ the
supremum of the flat area of translation surfaces in $U(S_0)$. Let
$L$ be the linear subspace in period coordinates
$H^1(S,\{P_1,\dots,P_\noz\};\R)$ representing $\cL$. We can consider
$U(S_0)$  as an open domain $U([\Re(\omega_0)])\subset L$ under the
natural local identification of the $\Re$-leaf through $S_0$ in $\cL$
with $L\subset H^1(S,\{P_1,\dots,P_\noz\};\R)$. Since $L$ is a
rational subspace of the relative cohomology
$H^1(S,\{P_1,\dots,P_\noz\};\R)$ with respect to the natural integer
basis in period coordinates, we conclude that the rational grid $L\cap
H^1(S,\{P_1,\dots,P_\noz\};\frac{1}{N}\Z)$ forms a $d$-dimensional
lattice in $L$, where $d=\dim L=\dim_{\C}\cL$. Consider a translation
surface $S^{\mathit{def}}\in U$ corresponding to a point of this
grid, and apply the transformation
$\begin{pmatrix}N&0\\0&1\end{pmatrix}$ to it.
Theorem~\ref{th:Re:Im:invariance} implies that we get a surface in
$\cD_\Z(\cL)$ of area at most $A\cdot N$. The number of the points of
our rational grid which get inside $U$ is of the order $\Vol(U)\cdot
N^d$. The relation~\eqref{eq:inf} is proved.

It remains to prove that the upper and lower limits~\eqref{eq:sup}
and~\eqref{eq:inf} coincide. This follows from the fact that we can
vary the integer horizontal parameters $\ell_j, \phi_k$ independently
from the integer vertical parameters $h_k$ in such way that the
translation surface stays in $\cD_\Z(\cL)$, and from the fact that we
have flexibility with order of summation when manipulating series of
\textit{positive} terms.

Fix any collection of integer parameters $h_k$ realizable as heights
of the cylinders of some translation surface $S_0$ in $\cD_\Z(\cL)$.
The $\Re$-leaf $\cL_{\Re}(S_0)=\cL_{\Re}(h_1,\dots)$ in $\cL$ passing
through $S_0$ has the following global structure. The admissible
parameters $\ell_j$ representing the horizontal cylinder
decomposition of an actual translation surface in $\cL$ with periodic
horizontal foliation corresponding to the diagram $\cD$ might be
defined as the set of solutions of a system of linear equations and
inequalities in $\ell_j$. This defines a finite union of polyhedral
cones. For any fixed collection of admissible parameters $\ell_j$,
parameters $\phi_k$ belong to a torus of some fixed dimension
determined by $\cD_\Z(\cL)$, and $\cL_{\Re}(h_1,\dots)$ is the total
space of the corresponding torus bundle. Note that it is endowed with
the natural volume element coming from the volume element in the
period coordinates $L\subset H^1(S,\{P_1,\dots,P_\noz\};\R)$
normalized by means of the integer lattice $L\cap
H^1(S,\{P_1,\dots,P_\noz\};\Z)$. (By assumption $L$ is rational, so
this lattice has maximal dimension in $L$.)

The condition $\Area(S)\le a$ is expressed in our coordinates as
$\sum w_k h_k \le a$, where $w_k$ are appropriate sums of
subcollections of $\ell_j$, and where we consider $h_k$ as fixed
parameters. Denote by
$\cD_\Z(h_1,\dots)=\cL_{\Re}(h_1,\dots)\cap\cD_\Z(\cL)$ the subset of
$\cD_\Z(\cL)$ corresponding to the fixed realizable collection of
integer parameters $h_k$. The limit
$$
c(h_1,\dots)=2d\cdot \lim_{a\to+\infty} \frac{1}{a^{d}} \cdot
\card\{S\in \cD_\Z(h_1,\dots)\,\big|\, \Area(S) \le a\} \,,
$$
exists since it computes the well-defined volume of
$\cL_{\Re}(h_1,\dots)$.
We have expressed the limit $c(\cD_\Z(\cL))$
in~\eqref{eq:limit} as an infinite sum of positive terms
\begin{equation}
\label{eq:sum:c:h1:dots}
c(\cD_\Z(\cL))=
\sum_{\substack{\mathit{Realizable}\\ \mathit{integer}\ h_1,\dots}} c(h_1,\dots)\,.
\end{equation}
By~\eqref{eq:sup} the sum is bounded, and hence converging, which
proves that the limit exists.
\end{proof}

\begin{Remark}
The diagram-by-diagram computation of the Masur--Veech volume
performed for some low-dimensional strata of Abelian and quadratic
differentials follows~\eqref{eq:sum:c:h1:dots}.
The reader can find such calculation
of $\Vol\cH(2)$ in~\eqref{eq:D1} and~\eqref{eq:D2} in
Appendix~\ref{ss:separatrix:diagrams}.
\end{Remark}

\begin{Definition}
\label{def:Im:Re:invariant}
We say that a lattice subset $\cD_\Z(\cL)$
in a linear $\GLR$-invariant
orbifold $\cL$ defined over $\Q$ is $\Re$-\textit{invariant}
(respectively $\Im$-\textit{invariant}) if for any integer point
$S_0$ of $\cD_\Z(\cL)$ all integer points
located in the
leaf of the $\Re$-foliation (respectively $\Im$-foliation)
in $\cL$ passing through $S_0$
also belong to $\cD_\Z(\cL)$.
\end{Definition}

Note that the unipotent subgroups
$$
\operatorname{U}_h=\left\{\begin{pmatrix} 1&t\\0&1\end{pmatrix}\right\},\
\operatorname{U}_v=\left\{\begin{pmatrix} 1&0\\t&1\end{pmatrix}\right\},\
\text{ where }t\in\R\,.
$$
act along the leaves of the $\Re$-foliation and $\Im$-foliation
respectively. Thus any $\Re$-invariant (respectively $\Im$-invariant)
lattice subset $\cD_\Z$ is automatically $\Uh$-invariant
(respectively $\Uv$-invariant). If, in addition, condition b) in
Definition~\ref{def:essential:lattice:subset} is satisfied, we will
say that $\cD_\Z$ is a $\Re$-\textit{invariant} (respectively
$\Im$-\textit{invariant}) \textit{essential lattice subset}. Clearly,
the $\Re$-invariance (correspondingly $\Im$-invariance) is a stronger
property than the $\Uh$-invariance (correspondingly $\Uv$-invariance)
of an essential lattice subset.

\begin{Corollary}
\label{cor:DZ:is:Im:invariant}
Fix a separatrix diagram $\cD$ of the horizontal (respectively
vertical) foliation realizable for some
$\GLR$-invariant orbifold $\cL$ defined over $\Q$.
Consider the set $\cD_\Z(\cL)$ of square-tiled surfaces in
$\cL$ tiled by unit squares and having the separatrix diagram $\cD$. The
set $\cD_\Z(\cL)$ is a $\Re$-\textit{invariant} (respectively
$\Im$-\textit{invariant}) essential lattice subset.
\end{Corollary}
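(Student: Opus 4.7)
The plan is to deduce the corollary as a direct consequence of the two results immediately preceding it, namely Theorem~\ref{th:sep:diagram:is:essential} and Theorem~\ref{th:Re:Im:invariance}. Two things must be checked: (i) that $\cD_\Z(\cL)$ satisfies the two defining conditions of an essential lattice subset (invariance under $\Uh$, resp.\ $\Uv$, and existence and positivity of the limit~\eqref{eq:limit}), and (ii) that $\cD_\Z(\cL)$ satisfies the stronger invariance under the full leaf of the $\Re$-foliation (resp.\ $\Im$-foliation) through any of its points, as demanded by Definition~\ref{def:Im:Re:invariant}.

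For (i) no new work is required: Theorem~\ref{th:sep:diagram:is:essential} states precisely that whenever $\cD$ is realizable in a $\GLR$-invariant suborbifold $\cL$ defined over $\Q$, the set of unit-square-tiled surfaces (or pillowcase covers, in the quadratic case) in $\cL$ with separatrix diagram $\cD$ is an essential lattice subset. I would just cite it and move on.

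The actual content of the corollary is (ii). I will treat the horizontal/$\Re$ case, the vertical/$\Im$ case being completely symmetric. Pick $S_0 \in \cD_\Z(\cL)$; by the very definition of $\cD_\Z(\cL)$, the surface $S_0$ is a unit-square-tiled surface in $\cL$ whose horizontal foliation is periodic with separatrix diagram $\cD$. Let $S$ be any integer point of $\cL$ that lies in the leaf of the $\Re$-foliation through $S_0$. By the second half of Theorem~\ref{th:Re:Im:invariance}, the horizontal foliation of $S$ is then also periodic with the same separatrix diagram~$\cD$. Being an integer point in $\cL$, $S$ is itself a unit-square-tiled surface in $\cL$, so we conclude $S \in \cD_\Z(\cL)$. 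This is exactly the $\Re$-invariance required by Definition~\ref{def:Im:Re:invariant}.

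No substantial obstacle is expected, since the two ingredient theorems are already in hand. The only subtle point, worth flagging when writing out the argument, is that the ``leaf of the $\Re$-foliation in $\cL$ passing through $S_0$'' must be interpreted connectedly, as emphasized in the remark following Theorem~\ref{th:Re:Im:invariance}; this is precisely the hypothesis under which that theorem delivers preservation of the separatrix diagram. As a sanity check, recall that the continuous unipotent subgroup $\operatorname{U}_h$ (resp.\ $\operatorname{U}_v$) acts along the leaves of the $\Re$-foliation (resp.\ $\Im$-foliation), so the $\Re$-invariance (resp.\ $\Im$-invariance) we deduce is genuinely stronger than the discrete $\Uh$-invariance (resp.\ $\Uv$-invariance) already provided by Theorem~\ref{th:sep:diagram:is:essential}, and it is this strengthening that constitutes the new information carried by the corollary.
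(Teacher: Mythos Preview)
Your proposal is correct and follows exactly the paper's approach: the paper's proof consists of the single sentence ``The corollary is a simple combination of Theorem~\ref{th:sep:diagram:is:essential} and Theorem~\ref{th:Re:Im:invariance},'' and you have simply spelled out the two steps of that combination in detail. Your remark about connectedness of leaves is a slight over-reading---Theorem~\ref{th:Re:Im:invariance} is stated for leaves in the ambient stratum, and since a leaf of the $\Re$-foliation in $\cL$ is contained in a leaf of the $\Re$-foliation in the stratum, the theorem applies directly without any subtlety---but this does not affect the correctness of your argument.
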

\begin{proof}
The corollary is a simple combination of
Theorem~\ref{th:sep:diagram:is:essential} and
Theorem~\ref{th:Re:Im:invariance}.
\end{proof}

The above Corollary immediately implies that the sets of square-tiled
surfaces in any connected component of a stratum having a fixed
number of horizontal (respectively vertical) cylinders is also
$\Re$-invariant (respectively $\Im$-invariant).

We have seen in the proof of Theorem~\ref{th:Re:Im:invariance} that
assigning any fixed realizable integer values to the heights
$h_1,\dots$ of the maximal cylinders we still get a
$\Re$-\textit{invariant} essential lattice subset
$\cD(h_1,\dots)\subset \cD_\Z(\cL)$ in the ambient $\GLR$-invariant
suborbifold $\cL$. However, imposing any nontrivial realizable
additional linear inequalities on the lengths of the horizontal saddle
connections $\ell_j$ we get a $\Uh$-invariant essential lattice
subset which is already \textit{not} $\Re$-invariant.

\subsection{Equidistribution for interval exchange transformations}

For any interval exchange transformation $T=(\pi,\lambda)$ one can
construct a continuous family $\cS(\pi,\lambda)$
of flat surfaces carrying the structure
of a suspension over $T$.

\begin{Lemma}
\label{lm:suspensions:Im:leaf}
The set $\cS(\pi,\lambda)$ of all suspensions over any fixed interval
exchange transformation $T=(\pi, \lambda)$ forms an open connected
subset of the corresponding leaf of the $\Im$-foliation.
\end{Lemma}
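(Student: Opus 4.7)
The plan is to parameterize $\cS(\pi,\lambda)$ by the admissible suspension data of the standard Masur--Veech polygonal construction, and then verify each of the three properties (containment in a single $\Im$-leaf, openness, connectedness) in these coordinates.

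Given $T=(\pi,\lambda)$ on $n$ intervals, I would build a suspension as the polygon whose bottom boundary is the concatenation of the vectors $v_i=(\lambda_i,\tau_i)$ in the identity order and whose top boundary is the same vectors reordered by $\pi$, with parallel sides identified by translation. The admissible suspension data $\tau\in\R^n$ are those for which the two broken lines lie strictly above and strictly below the base segment without self-intersection; this is cut out by finitely many strict linear inequalities in $\tau$, so the admissible set is an open convex subset of $\R^n$.

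Containment in a single $\Im$-leaf is then immediate: the real parts of the relative periods on the edges of the polygon are the $\lambda_i$, which are fixed by $(\pi,\lambda)$. Since the edges of the polygon descend, after gluing, to a collection of cycles that form the $1$-skeleton of a $CW$ decomposition of $S$ with $0$-cells located at the conical singularities $\{P_1,\dots,P_\noz\}$, they span the relative homology $H_1(S,\{P_1,\dots,P_\noz\};\Z)$ (essentially by the same argument as Lemma~\ref{lem:span:hom}). Hence the class $[\Re\omega]\in H^1(S,\{P_1,\dots,P_\noz\};\R)$ is determined by $(\pi,\lambda)$ alone, and all elements of $\cS(\pi,\lambda)$ lie in the same leaf of the $\Im$-foliation. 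Openness in that leaf is equally direct: local coordinates on the $\Im$-leaf are the imaginary parts of relative periods, and by construction $\tau$ provides precisely such a set of parameters; a small perturbation of an admissible $\tau$ remains admissible and gives a suspension of the same $T$. Connectedness follows at once from the convexity of the admissible set established above.

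The one delicate point, modest though it is, will be checking that this polygonal model captures \emph{every} suspension of $T$ (not just the generic ones) and that distinct admissible $\tau$ give distinct points of the leaf. Both reduce to the standard observation that $\tau$ can be read off from the imaginary parts of the relative periods along the marked edges of the polygon, so the parameterization is a genuine open embedding $\tau\mapsto(C,\omega)$ from the admissible convex set into the $\Im$-leaf with image exactly $\cS(\pi,\lambda)$.
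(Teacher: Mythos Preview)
Your proof is correct and follows essentially the same approach as the paper: both rely on the Masur--Veech suspension construction, with the paper simply citing \cite{Veech:Gauss:measures} and \cite{Masur:82} for the fact that the set of suspensions is an open connected $d$-dimensional domain, while you unpack that construction explicitly via the polygonal model and the convex set of admissible $\tau$. Your added observations that the polygon edges span relative homology and that the admissible $\tau$ are cut out by strict linear inequalities are exactly what underlies the cited results.
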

\begin{proof}
The set $\cS(\pi,\lambda)$ of suspensions is described
in~\cite{Veech:Gauss:measures}; in particular, it is proved there
that it forms an open connected $d$-dimensional domain, where $d$ is
the complex dimension of the stratum. (Actually,
the article~\cite{Veech:Gauss:measures} describes much finer structure of
this set.) The fact that it is a subset of a leaf of the
$\Im$-foliation is a triviality.

The reader might consult the paper~\cite{Masur:82} for a
very explicit realization of a certain concrete suspension over any
given interval exchange transformation
\end{proof}

Theorem~\ref{th:Re:Im:invariance} and
Lemma~\ref{lm:suspensions:Im:leaf} imply that certain properties of
any suspension over an interval exchange transformation are
completely determined by the underlying interval exchange
transformation. In particular, the vertical foliation is
periodic for a suspension if and only if the original interval
exchange transformation is periodic. The separatrix
diagram $\cD$ of the corresponding periodic vertical
foliation is one and the same for all suspensions and, thus, is
encoded by the underlying interval exchange transformation $T$.
Note that if the lengths of all subintervals under exchange are
commensurable, the interval exchange transformation is necessarily
periodic.

Combining the  equidistribution
Theorem~\ref{th:equidistribution:spatial} with
Lemma~\ref{lm:suspensions:Im:leaf} and with
Corollary~\ref{cor:DZ:is:Im:invariant} we obtain a
similar equidistribution   result   for   interval  exchange
transformations.

We say that a permutation $\pi$ on $\{1,2,\ldots,d\}$
is {\em irreducible} if the only
$\pi$-invariant subsets of the form $\{1, 2, \dots, k\}$ are the empty
set and $\{1,2, \dots, d\}$.

\begin{Theorem}
\label{th:iet}
Let $\pi$ be an irreducible
permutation, let $\cH^{comp}$ be the associated connected component
of the stratum of Abelian differentials ambient for suspensions over
interval exchange transformations with permutation $\pi$ and let $d$ be
its complex dimension. Furthermore, let $\cD_\Z$ be an $\Im$-invariant essential
lattice subset of square-tiled surfaces in $\cH^{comp}$.

Consider any open and relatively compact set $I$ in $\R^d_+$.
The number
$$
n_\cD(I,\epsilon):=
\card\{\lambda\in(\epsilon\N)^d\cap I\ \big|\
\cS(\pi,\lambda)\cap\cD_{\varepsilon\Z}\neq\emptyset\}
$$
of interval exchange transformations $(\pi,\lambda)$ in
an $\epsilon$-grid in $I$
such that $(\pi,\lambda)$
has suspensions in $\cD_{\varepsilon\Z}$,
asymptotically   depends   only  on
$\cD_\Z$ and on the Lebesgue measure $\vol(I)$ of $I$:
\begin{equation}
\label{eq:equidistribution1:iet}
\lim_{\epsilon\to+0} \epsilon^{d}\cdot n_\cD(I,\epsilon)  =
\frac{c(\cD_\Z)}{\nu_1(\cH^{comp})}\cdot\vol(I)\,,
\end{equation}
where  the  constant $c(\cD_\Z)$ defined in~\eqref{eq:limit} depends
only on the essential lattice subset.

Equivalently, for any continuous function
$f: \R^d_+ \to \R$ with compact support one has
\begin{equation}
\label{eq:equidistribution2:iet}
\lim_{\epsilon \to +0} {\epsilon^{d}}
\sum_{\substack{\lambda\in(\epsilon\N)^d\cap I\ \text{such}\\
\text{that }\ \cS(\pi,\lambda)\cap\cD_{\varepsilon\Z}\neq\emptyset}}
f(\lambda) =
\frac{c(\cD_\Z)}{\nu_1(\cH^{comp})}
\int_{I} f(\lambda) d\lambda.
\end{equation}
\end{Theorem}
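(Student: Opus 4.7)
The plan is to deduce Theorem~\ref{th:iet} from the spatial equidistribution Theorem~\ref{th:equidistribution:spatial} by means of a suspension parametrization that decouples the interval-exchange data $\lambda$ from the vertical data $\tau$ of the suspension. The idea is to apply the spatial equidistribution to a product set $X=I\times J$ in $(\lambda,\tau)$-coordinates, then extract $n_\cD(I,\epsilon)$ by factoring out the $\tau$-contribution.

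First, fix a standard suspension construction (e.g.\ Veech's zippered rectangles) such that, in a neighborhood of any suspension of $(\pi,\lambda_0)$, the period coordinates of $\cH^{comp}$ are given by a pair $(\lambda,\tau)\in\R_+^d\times\Omega_\pi$, where $\Omega_\pi\subset\R^d$ is an open convex region depending only on $\pi$, and where $(\lambda,\tau)$ are the periods of $\omega$ against an integral basis of $H^1(S,\{P_1,\dots,P_\noz\};\Z)$. Under this identification $d\nu=d\lambda\,d\tau$ and $\cL_\Z$ corresponds to $\Z^d\times(\Z^d\cap\Omega_\pi)$. Now Lemma~\ref{lm:suspensions:Im:leaf} says that $\cS(\pi,\lambda)$ lies inside a single leaf of the $\Im$-foliation; combined with the assumed $\Im$-invariance of $\cD_\Z$, this implies that for each $\lambda\in(\epsilon\N)^d$ the entire integer-grid fiber $\{\lambda\}\times\bigl((\epsilon\Z)^d\cap\Omega_\pi\bigr)$ either lies in $\cD_{\epsilon\Z}$ or is disjoint from it. Denote this $\{0,1\}$-valued indicator by $F(\lambda,\epsilon)$, so that $n_\cD(I,\epsilon)=\sum_{\lambda\in(\epsilon\N)^d\cap I}F(\lambda,\epsilon)$.

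Pick any bounded open Jordan-measurable subset $J\subset\Omega_\pi$ with $\vol(J)>0$ and set $X=I\times J\subset\cH^{comp}$, so that $\nu(X)=\vol(I)\vol(J)<\infty$. A fiberwise count yields
\[
|X\cap\cD_{\epsilon\Z}|=n_\cD(I,\epsilon)\cdot|(\epsilon\Z)^d\cap J|=n_\cD(I,\epsilon)\bigl(\epsilon^{-d}\vol(J)+o(\epsilon^{-d})\bigr),
\]
while Theorem~\ref{th:equidistribution:spatial} applied to $X$ gives
\[
\epsilon^{2d}\,|X\cap\cD_{\epsilon\Z}|\longrightarrow\frac{c(\cD_\Z)}{\nu_1(\cH^{comp})}\vol(I)\vol(J).
\]
Equating the two expressions and dividing through by $\vol(J)$ produces~\eqref{eq:equidistribution1:iet}. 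For the test-function version~\eqref{eq:equidistribution2:iet} one replays the same argument with the tensor-product test function $(\lambda,\tau)\mapsto f(\lambda)g(\tau)$, where $g$ is any nonnegative continuous bump supported in $J$ with $\int g\,d\tau>0$; the factor $\int g\,d\tau$ cancels from both sides.

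The main technical obstacle is the setup step: verifying that in the chosen suspension parametrization the coordinates $(\lambda,\tau)$ form an integral basis of the relative cohomology (so that $\cD_{\epsilon\Z}$ is literally the product lattice in these coordinates) and that the cone $\Omega_\pi$ of valid heights is $\lambda$-independent across a neighborhood of $\overline I$. Both facts are folklore in the Veech zippered-rectangles framework but must be verified against the specific normalization of $\cL_\Z$ used in Section~\ref{s:Main:results}. The residual work --- Jordan-set lattice-point counting in $J$ and the standard passage from indicator test functions to continuous ones whose support may meet $\partial I$ --- is routine once the decoupling is in place.
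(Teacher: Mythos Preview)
Your proposal is correct and follows essentially the same approach as the paper: pick a product box $X=I\times J$ in period coordinates, use $\Im$-invariance to factor the lattice count as $n_\cD(I,\epsilon)\cdot\card\bigl(J\cap(\epsilon\Z)^d\bigr)$, apply Theorem~\ref{th:equidistribution:spatial}, and cancel $\vol(J)$. The paper resolves the technical obstacle you flag (whether the $\tau$-domain can be chosen $\lambda$-independent over all of $I$) simply by first proving the statement for a small $I$ around a fixed $\lambda_0$ admitting a local product chart, and then covering an arbitrary relatively compact $I$ by a finite partition into such neighborhoods.
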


\begin{proof}
We first prove the result for some small enough open neighborhood $I$ of a
point $\lambda_0 \in \R^d_+$. Fix $\lambda_0 \in \R^d_+$ and a
suspension $S_0 \in \cS(\pi, \lambda_0)$. Because of the product structure,
there exists a neighborhood of $S_0$ in the corresponding stratum of the
form $X = I \times J$, where $I$ corresponding to the real part is identified
with a subset of $\R^d_+$ corresponding to the length data of the interval
exchange transformation, and $J$ corresponds to the imaginary part.

Because $\cD_\Z$ is $\Im$-invariant, the property of being in $\cD_\Z$ only depends on
the coordinate in $I$. As a consequence we have
$$
\widetilde{N}_{\cD}(X, \epsilon) = n_\cD(I, \epsilon)
\times \card( J \cap (\epsilon \Z^d))\,,
$$
where $\widetilde{N}_{\cD}(X, \epsilon)$ is the quantity introduced
in Theorem~\ref{th:equidistribution:spatial}. Applying
Theorem~\ref{th:equidistribution:spatial} we get as $\epsilon$
tends to $0$ that
$$
\epsilon^d n_\cD(I, \epsilon) =
\frac{\epsilon^{2d} \widetilde{\cN}_\cD(X, \epsilon)}{\epsilon^d \card( J \cap (\epsilon \Z^d))}
\to \frac{c(\cD_\Z)}{\nu_1(\cL_1)} \frac{\nu(X)}{\vol(J)}.
$$
Because $\nu$ is the product measure of the Lebesgue measures along
$\Re$ and $\Im$ foliations we have $\nu(X) = \vol(I) \times \vol(J)$ and
the result follows.

We just proved the result for an open set $I$ small enough
so that there exists an open set $J$ with a product $X = I \times J$ that
embedds in the corresponding stratum $\cH$. Now the result extends to any
relatively compact $I$ by considering finite partition so that the embedding
can be realized on each element of the partition.
\end{proof}

We are especially interested by the following particular case
of the above Theorem.

\begin{Corollary}
\label{cor:k:cyl:iet}
Let $\pi$ be a permutation in the Rauzy class representing some
connected component $\cH^{comp}(m_1,\dots,m_\noz)$ of a stratum of
Abelian differentials, and let $d$ be the number of elements in $\pi$.

Consider any open bounded set $I$ in $\R^d_+$. The numbers
of interval exchange transformations $(\pi,\lambda)$ in
an $\epsilon$-grid in $I$ satisfying
$$
n_k(I,\epsilon):=
\card\{\lambda\in(\epsilon\N)^d\cap I\ \big|\
(\pi,\lambda)\text{ has $k$ bands of periodic orbits}\}\quad
k=1,2,\dots
$$
have the same asymptotic proportions as the
proportions $p_k(\cH^{comp}(m_1,\dots,m_\noz))$ of
$k$-cylinder square-tiled surfaces
in the ambient component $\cH^{comp}(m_1,\dots,m_\noz)$
of the stratum of Abelian differentials:
\begin{equation}
\label{eq:equidistribution:iet:ratio:pi:pj}
\lim_{\epsilon\to+0} \frac{n_i(I,\epsilon)}{n_j(I,\epsilon)}  =
\frac{\prop_i(\cH^{comp}(m_1,\dots,m_\noz))}{\prop_j(\cH^{comp}(m_1,\dots,m_\noz))}\quad\text{for any }i,j\in\N\,.
\end{equation}
\end{Corollary}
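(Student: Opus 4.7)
The plan is to deduce this corollary from Theorem~\ref{th:iet} by a careful choice of the essential lattice subset. First I would let $\cD^{(k)}_\Z$ denote the set of all square-tiled surfaces in $\cH^{comp}(m_1,\dots,m_\noz)$ having exactly $k$ maximal \emph{vertical} cylinders. Only finitely many separatrix diagrams with $k$ cylinders are realizable in the ambient component, so $\cD^{(k)}_\Z$ is a finite disjoint union of the $\Im$-invariant essential lattice subsets produced by Corollary~\ref{cor:DZ:is:Im:invariant}. A finite union of essential lattice subsets is again essential with constants adding up, and from~\eqref{eq:prop} one reads off
$$
c(\cD^{(k)}_\Z) \;=\; p_k(\cH^{comp}(m_1,\dots,m_\noz))\cdot\nu_1(\cH^{comp}(m_1,\dots,m_\noz)).
$$

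Next I would identify $n_{\cD^{(k)}}(I,\epsilon)$ with $n_k(I,\epsilon)$. By Lemma~\ref{lm:suspensions:Im:leaf} the set of suspensions $\cS(\pi,\lambda)$ is an open connected piece of a single leaf of the $\Im$-foliation, so Theorem~\ref{th:Re:Im:invariance} forces every surface in it to share one and the same vertical separatrix diagram; in particular every suspension has the same number of vertical cylinders, and this common number is exactly the number of bands of periodic orbits of the rational IET $(\pi,\lambda)$ (each band lifts to a vertical cylinder in every suspension). For $\lambda\in(\epsilon\N)^d$ one can explicitly produce at least one suspension with all heights in $\epsilon\Z$, so rescaling by $1/\epsilon$ places it in $\cD^{(k)}_\Z$ exactly when $(\pi,\lambda)$ has $k$ bands. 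Therefore $\cS(\pi,\lambda)\cap\cD^{(k)}_{\epsilon\Z}\neq\emptyset$ iff $(\pi,\lambda)$ has $k$ bands, i.e.\ $n_{\cD^{(k)}}(I,\epsilon)=n_k(I,\epsilon)$.

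Applying Theorem~\ref{th:iet} with $\cD_\Z=\cD^{(k)}_\Z$ then yields
$$
\lim_{\epsilon\to+0}\epsilon^{d}\,n_k(I,\epsilon)
\;=\;p_k(\cH^{comp}(m_1,\dots,m_\noz))\cdot\vol(I),
$$
and taking the ratio of these limits for indices $i$ and $j$ cancels the common factor $\vol(I)$, producing exactly~\eqref{eq:equidistribution:iet:ratio:pi:pj}.

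The only genuine bookkeeping step is to match the probability $p_k$ to the count of \emph{vertical} cylinders of the suspension (which is what $\Im$-invariance naturally gives), rather than to horizontal ones. This is painless: $p_k$ is defined symmetrically in the horizontal and vertical decompositions in the discussion preceding Theorem~\ref{th:sep:diagram:is:essential}, the symmetry being induced by the rotation $\bigl(\begin{smallmatrix}0&-1\\1&0\end{smallmatrix}\bigr)$ which preserves the integer lattice of square-tiled surfaces while swapping the two cylinder decompositions. Hence no additional argument is required, and the proof reduces to assembling the pieces above.
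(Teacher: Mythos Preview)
Your proposal is correct and follows exactly the route the paper intends: the paper presents this corollary as ``the following particular case of the above Theorem'' with no separate proof, and your argument simply makes explicit the implicit specialization of Theorem~\ref{th:iet} to the $\Im$-invariant essential lattice subsets $\cD^{(k)}_\Z$ of $k$-vertical-cylinder square-tiled surfaces. The only step worth a brief remark is your claim that for $\lambda\in(\epsilon\N)^d$ one can always find a suspension with $\epsilon$-integer imaginary periods; this holds because $\cS(\pi,\lambda)$ is invariant under the vertical rescalings $\bigl(\begin{smallmatrix}1&0\\0&t\end{smallmatrix}\bigr)$, hence is an open cone in the $\Im$-leaf and therefore meets $(\epsilon\Z)^d$ for every $\epsilon>0$.
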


Theorem~\ref{th:iet} allows a refinement for any linear
$\GLR$-invariant orbifold $\cL$ defined over $\Q$ and for any
$\Im$-invariant essential lattice subset $\cD_\Z$ in it. This time
the number of elements in $\pi$ would be different from the dimension
$d$ of $\cL$. One has to consider an initial interval exchange
transformation $(\pi,\lambda_0)$ in such a way that some suspension
over it belongs to $\cL$. One has to take an $\epsilon$-grid in an
appropriate open neighborhood $I$ of $(\pi,\lambda_0)$ in $L$, where
$L$ is the real linear subspace in $H^1(S,\{P_1,\dots,P_\noz\};\R)$
defined over $\Q$ representing $\cL$ in period coordinates. The power
$d$ in the normalization $\epsilon^d$ should be understood now as the
complex dimension of $\cL$, or, equivalently, as the real dimension
of $L$. Under these adjustments the statement and the proof become
completely analogous to the ones of Theorem~\ref{th:iet}.

\begin{Remark}
\label{rem:lin:invol}
Statements completely analogous to Theorem~\ref{th:iet}
and to Corollary~\ref{cor:k:cyl:iet} hold for rational linear
involutions and $\Im$-invariant essential lattice subsets of
pillowcases covers in the strata of quadratic differentials.
We specify the meaning of \textit{``integer''} (or \textit{``rational''})
linear involutions in Lemma~\ref{lm:integer:linear:involutions} in
the next section.
\end{Remark}

\subsection{Dependence on the definition of ``integer points''}\label{sect:dep}

Clearly, everything which was stated for linear $\GLR$-invariant
suborbifolds defined over $\Q$ in strata of Abelian differentials can
be generalized to analogous linear invariant suborbifolds $\cL$
defined over $\Q$ in strata of quadratic differentials. Applying the
canonical orientation double cover construction $\hat S\to S$ to
every flat surface $S$ in $\cL$ we transform $\cL$ into a linear
$\GLR$-invariant suborbifold $\hat\cL$ defined over $\Q$ located
already in the stratum of Abelian differentials ambient for $\hat S$.

Note that the choice of the ``integer lattice'' in the period
coordinates under this construction is a matter of convention. Recall
that a stratum of quadratic differentials is modeled on the subspace
$H^1_-(\hat S, \{\hat{P}_1,\dots, \hat{P}_\noz\};\C)$ antiinvariant
under the canonical involution of $\hat S$.

\begin{Convention}
\label{conv:lattice}
We chose as a distinguished lattice    in    $H^1_-(\hat
S,\{\hat{P}_1,\ldots,\hat{P}_\noz\};\C{})$ the  subset  of those
linear  forms  which  take  values  in  $\Z\oplus i\Z$ on $H^-_1(\hat
S,\{\hat   P_1,\dots,\hat   P_\noz\};\Z)$.
\end{Convention}

Alternatively, we can define the lattice as
$$
H^1_-(\hat S, \{\hat{P}_1,\dots, \hat{P}_\noz\};\C)\cap H^1(\hat S,
\{\hat{P}_1,\dots, \hat{P}_\noz\};\Z\oplus i\Z)\,.
$$
The former lattice is a sublattice of index $4^{2g+\noz-1}$ in the latter
one, where $g$ is the genus of $S$ and $\noz$ is the number of true
zeroes on $\hat S$, forgetting the ``false ones'', i.e. forgetting
the marked points corresponding to preimages of the poles (see
section~2.2 in~\cite{AEZ:Dedicata} and section~5.8
in~\cite{Goujard:volumes} for details on comparison of these two
conventions).

Note also, that the choice of such convention might considerably
affect the value of the constant $c(\cD_\Z)$ in the
definition~\eqref{eq:limit} of an essential lattice subset. For
example, the contribution of a fixed separatrix diagram to the volume
of the ambient stratum of quadratic differentials might change by a
factor different from $4^{2g+\noz-1}$ when passing from the second
lattice to the first one, though the total sum of contributions of all
separatrix diagrams differs for two choices of the lattice by the
factor $4^{2g+\noz-1}$, see an example considered in detail in
section~\ref{s:contibution:of:diag:for:two:lattices}.

Throughout this paper we follow Convention~\ref{conv:lattice}.
Lemma~\ref{lm:integer:linear:involutions} below shows that this
choice is coherent with the naive choice of integer (rational) linear
involutions.

Consider some linear involution $\pi$ of
$d+1$ elements (see~\cite{Danthony:Nogueira};
in alternative terminology --- a
\textit{generalized permutation}, see~\cite{Boissy:Lanneau}).
The lengths of subintervals of the corresponding generalized interval
exchange transformation satisfy a linear relation of the form
\begin{equation}
\label{eq:relation:generalized:iet}
\lambda_{j_1}+\dots+\lambda_{j_m}=\lambda_{k_1}+\dots+\lambda_{k_n}\,.
\end{equation}
Choose any parameter in the right or left hand side of
relation~\eqref{eq:relation:generalized:iet}. The remaining $d$
lengths of intervals under exchange in our generalized interval
exchange transformation provide local coordinates in the space of
generalized interval exchange transformations.

Consider a suspension over the generalized interval exchange
transformation $(\lambda,\pi)$, where $\lambda$ satisfies the
relation~\eqref{eq:relation:generalized:iet}. Note that each
interval under exchange is the projection of the corresponding saddle
connection to the real axes. In particular, under appropriate
orientation of the relative cycles in $H_1^-(\hat S,
\{\hat{P}_1,\dots, \hat{P}_\noz\};\Z)$ induced by such saddle
connections, the lengths of the subintervals under exchange represent
the real parts of half-periods of $\hat\omega$ on $\hat S$ over the
corresponding cycles, and thus, our coordinates in the space of
generalized interval exchange transformations can be viewed as period
coordinates $H^1_-(\hat S, \{\hat{P}_1,\dots, \hat{P}_\noz\};\R)$.

Choosing values in $(\epsilon\Z)^d$ for the chosen parameters we get
a subset $S_{\epsilon\Z}$ in the intersection of a neighborhood
$U(\lambda_0,\pi)\subset H^1_-(\hat S, \{\hat{P}_1,\dots,
\hat{P}_\noz\};\R)$ with the lattice $H^1_-(\hat S,
\{\hat{P}_1,\dots, \hat{P}_\noz\};\epsilon\Z)$. Note that when all
lengths chosen as coordinates are integer multiples of $\epsilon$,
the single remaining length is also an integer multiple of $\epsilon$
by relation~\eqref{eq:relation:generalized:iet}. This observation
implies the following simple Lemma.

\begin{Lemma}
\label{lm:integer:linear:involutions}
The resulting subset $S_{\epsilon\Z}$ in a small open neighborhood
$U(\lambda_0,\pi)\subset H^1_-(\hat S, \{\hat{P}_1,\dots,
\hat{P}_\noz\};\R)$ coincides with the intersection of $U$ with the
lattice defined by Convention~\ref{conv:lattice} scaled by the factor
$2\epsilon$.
\end{Lemma}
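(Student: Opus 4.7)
The plan is to pass through the explicit identification already set up just above the lemma: the $d$ coordinates chosen among the $\lambda_i$'s are linear coordinates on the real vector space $H^1_-(\hat S,\{\hat P_1,\ldots,\hat P_\noz\};\R)$, and under this identification the two lattices in question (those defined by integer lengths in $\epsilon\Z$, and the lattice of Convention~\ref{conv:lattice} scaled by $2\epsilon$) have to be shown to coincide.

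First I would recall the precise normalization. Fix an orientation of each horizontal saddle connection $s_i$ on the base surface corresponding to the $i$-th interval under exchange. Its two lifts to the orienting double cover $\hat S$ form an antiinvariant relative cycle $c_i\in H_1^-(\hat S,\{\hat P_1,\ldots,\hat P_\noz\};\Z)$. By construction, for a horizontal suspension represented by $\hat\omega$ one has $\int_{c_i}\hat\omega=2\lambda_i$, so $\int_{c_i}\Re(\hat\omega)=2\lambda_i$. This is exactly the ``half-period'' convention mentioned in the discussion preceding the lemma; it accounts for the factor $2$ in the statement.

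Second, I would observe that the cycles $c_1,\ldots,c_{d+1}$ satisfy the single homological relation obtained from~\eqref{eq:relation:generalized:iet} and that, modulo this single relation, they form a $\Z$-basis of $H_1^-(\hat S,\{\hat P_1,\ldots,\hat P_\noz\};\Z)$. To justify this I would use that the (lifts of) horizontal saddle connections, completed inside each cylinder by (lifts of) generic transversal segments joining singularities, give a $1$-skeleton of a $CW$-decomposition of $\hat S$ equivariant with respect to the hyperelliptic-type involution $\sigma$; its relative cellular chain complex splits as a direct sum of $\sigma$-invariant and $\sigma$-antiinvariant subcomplexes, and the antiinvariant piece in dimension $1$ is freely generated by the $c_i$'s modulo the relation coming from~\eqref{eq:relation:generalized:iet}. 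Since $\dim_\R H^1_-(\hat S,\{\hat P_1,\ldots,\hat P_\noz\};\R)=d$ and the number of $c_i$'s modulo one relation is also $d$, this is consistent and what remains is to check that the lattice generated is the full one, not merely a finite-index sublattice; this is precisely the content of the cellular splitting above.

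Third, granted the $\Z$-basis claim, the conclusion is a one-line verification. Membership of $[\hat\omega]$ in the lattice of Convention~\ref{conv:lattice} rescaled by $2\epsilon$ amounts to $\int_c\hat\omega\in 2\epsilon(\Z\oplus i\Z)$ for every $c$ in a $\Z$-basis of $H_1^-$; using the basis $\{c_i\}/(\text{relation})$ and taking real parts this reads $2\lambda_i\in 2\epsilon\Z$, i.e.\ $\lambda_i\in\epsilon\Z$ for all $i$, which is exactly the defining condition for $S_{\epsilon\Z}$; note that by the relation~\eqref{eq:relation:generalized:iet} (whose coefficients are $\pm 1$), it is immaterial whether one imposes integrality on the $d$ chosen lengths or on all $d+1$ of them. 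The only genuinely delicate point is the $\Z$-basis statement in the second paragraph; the rest is bookkeeping and the factor $2\epsilon$ is built into the half-period normalization.
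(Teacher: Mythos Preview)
Your approach is essentially the one the paper has in mind, but carried out with considerably more care. The paper gives no formal proof at all: it merely notes that relation~\eqref{eq:relation:generalized:iet} has $\pm 1$ coefficients (so integrality of $d$ lengths forces integrality of the last), declares the lemma ``simple'', and moves on. The half-period identification $\int_{c_i}\hat\omega=2\lambda_i$ that you invoke is exactly what the paper sets up in the paragraph preceding the lemma, so your steps~1 and~3 match the paper's implicit reasoning verbatim.

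What you add is step~2: the explicit claim that the antiinvariant lifts $c_i$ of the saddle connections generate $H_1^-(\hat S,\{\hat P_j\};\Z)$ as a $\Z$-module (modulo the single relation), not merely a finite-index sublattice. You are right that this is the only point with actual content, and the paper simply takes it for granted. Your proposed justification via an equivariant CW-decomposition of $\hat S$ is the natural way to go and is correct in outline; to make it airtight you would want to be explicit that the transversal segments you add inside each cylinder can be chosen $\sigma$-antiinvariantly (so the decomposition really is equivariant) and that the antiinvariant part of the relative $1$-chains is then \emph{freely} generated by the $c_i$'s together with the transversal pieces. But this is a routine check, and your identification of where the substance lies is accurate.
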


This lemma implies that stating
direct analogs of Theorem~\ref{th:iet} and of Corollary~\ref{cor:k:cyl:iet}
for linear involutions with lengths of the subintervals in the mesh
$(\epsilon\N)^d$ we get absolute contributions $c(\cD_\Z)$
in the analog of Theorem~\ref{th:iet}
and the relative contributions $p_i(\cQ^{comp}(d_1,\dots,d_k))$
in the analog of Corollary~\ref{cor:k:cyl:iet}
which correspond to Convention~\ref{conv:lattice} on the choice of the
integer lattice in
$H^1_-(\hat S, \{\hat{P}_1,\dots,
\hat{P}_\noz\};\Z\oplus i\Z)$.

Returning to the discussion of the choice of ``integer points'',
one can redefine ``integer points'' more radically than choosing a
finite index sublattice in cohomology. The Equidistribution
Theorem~\ref{th:equidistribution} shows that \textit{any}
$\Uh$-invariant (or $\Uv$-invariant) essential lattice subset $\cD_\Z$
might serve as a ``set of integer points''. We have to adjust the
definition of the Masur--Veech measure in terms of such a set of
integer points defining it as
\begin{equation}
\label{eq:measure:through:integer:points}
\nu'_1(X):=\lim_{R\to+\infty} 2d\cdot\frac{1}{R^d}\cdot
\card\{\cD_\Z\cap C_R X_1\}
\end{equation}
(normalization factor $2d$ is kept here by purely esthetic
reasons). By Theorem~\ref{th:equidistribution} the resulting measure
$d\nu'_1$ is proportional to the original Masur--Veech measure
with the constant factor $p(\cD_\Z)$.

\subsection{Horizontal and vertical cylinder decompositions
are uncorrelated}
\label{ss:Uncorrelated:properties}

Essential lattice subsets $\cD_\Z$ and $\cD'_\Z$ are, in general,
``correlated'':
$p(\cD_\Z\cap\cD'_\Z)$ is not necessarily equal to $p(\cD_\Z)\cdot
p(\cD'_\Z)$.
The results below show, however,
that any horizontally-invariant and any vertically-invariant
essential properties detectable by interval exchange transformations
or by the real part of relative cohomology are ``uncorrelated'', that is the
equality:
\begin{equation}
\label{eq:uncorrelated}
p(\cD_\Z\cap\cD'_\Z)=p(\cD_\Z)\cdot p(\cD'_\Z)
\end{equation}
is valid.

Fix some lattice subset $\cD_\Z$ of square-tiled surfaces tiled with
unit squares in some linear $\GLR$-invariant suborbifold defined over
$\cQ$. Recall that $\cD_{\epsilon\Z}$ denotes the image of $\cD_\Z$
under the action of the uniform contraction with the scaling factor
$\epsilon$, namely
$$
\cD_{\epsilon\Z}:=
\begin{pmatrix}\epsilon&0\\0&\epsilon\end{pmatrix}\cD_\Z\,.
$$

\begin{Theorem}
\label{th:uncorrelated}
Let $\cD^h_\Z,\cD^v_\Z$ be a pair of essential lattice subsets
in some linear $\GLR$-invariant suborbifold $\cL$ defined over $\Q$
that are respectively $\Re$-invariant and $\Im$-invariant. Denote by $d$
the complex dimension of $\cL$. Let $X$ be an open subset in $\cL$. The number
$$
\widetilde\cN_{\cD^h,\cD^v}(X,\epsilon)=
\card\{S\in\cD^h_{\epsilon\Z}\cap \cD^v_{\epsilon\Z}\cap X\}
$$
asymptotically   depends   only  on
$\cD^h_\Z, \cD^v_\Z$ and on the Masur--Veech measure $\nu(X)$:
\begin{equation}
\label{eq:uncorrelated:count}
\lim_{\epsilon\to+0} \epsilon^{2d} \widetilde\cN_\cD(X,N)  =
\frac{c(\cD^h_\Z)}{\nu_1(\cL_1)}\cdot \frac{c(\cD^v_\Z)}{\nu_1(\cL_1)}\cdot
\nu(X)\,,
\end{equation}
where  the  constants
$c(\cD^h_\Z)$ and $c(\cD^v_\Z)$ are
defined in~\eqref{eq:limit}.

Equivalently, for any continuous function $f: \cL \to \R$, which is compactly
supported or positive (or satisfies both of these
properties) one has:
\begin{equation}
\label{eq:uncorrelated:f}
\lim_{\epsilon \to +0}
\epsilon^{2d}
\sum_{S \in \cD^h_{\epsilon\Z}\cap\cD^v_{\epsilon\Z}\cap X}
f(S) = \frac{c(\cD^h_\Z)}{\nu_1(\cL_1)}\cdot \frac{c(\cD^v_\Z)}{\nu_1(\cL_1)}\cdot
 \int_{X} f d\nu.
\end{equation}
\end{Theorem}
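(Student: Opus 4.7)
The plan is to exploit the local product structure of $\cL$ coming from the transverse $\Re$- and $\Im$-foliations, together with the observation that $\Re$-invariance of $\cD^h_\Z$ forces membership in $\cD^h_{\epsilon\Z}$ to depend only on the imaginary period coordinates, while symmetrically $\Im$-invariance of $\cD^v_\Z$ makes it depend only on the real ones. This decouples the joint count into a product of two half-dimensional counts, each of which is controlled by the method already developed in the proof of Theorem~\ref{th:iet}.

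As a first step I would reduce to the case where $X$ is a product box. Around any point $S_0\in\cL$ the product decomposition of period coordinates furnishes a neighborhood of the form $U=I\times J$, with $I$ an open set in the real $d$-dimensional factor $L_\R$ of $L$ and $J$ an open set in the imaginary factor $L_{i\R}$. Any open $X\subset\cL$ can be sandwiched, up to arbitrarily small $\nu$-measure, between finite disjoint unions of such product boxes, and both sides of~\eqref{eq:uncorrelated:count} are additive in $X$, so it is enough to prove the statement for a single product box $X=I\times J$.

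On such a box the key factorization is available. Since $L$ is $\Q$-rational, its integer lattice splits as the direct product of the integer lattices in $L_\R$ and $L_{i\R}$, and the scaling by $\epsilon$ preserves this product structure. Combined with the $\Re$-invariance of $\cD^h_\Z$, this shows that $\cD^h_{\epsilon\Z}\cap X=(I\cap(\epsilon\Z)^d)\times J^h_\epsilon$ for some $J^h_\epsilon\subset J\cap(\epsilon\Z)^d$, and symmetrically $\cD^v_{\epsilon\Z}\cap X=I^v_\epsilon\times(J\cap(\epsilon\Z)^d)$, whence
\begin{equation*}
\widetilde\cN_{\cD^h,\cD^v}(X,\epsilon)=\card(I^v_\epsilon)\cdot\card(J^h_\epsilon).
\end{equation*}
Applying Theorem~\ref{th:equidistribution:spatial} to $\cD^v_\Z$ on a product box $I\times J'$ and dividing by $\card(J'\cap(\epsilon\Z)^d)\sim\epsilon^{-d}\vol(J')$ gives the half-dimensional asymptotic $\epsilon^d\card(I^v_\epsilon)\to\tfrac{c(\cD^v_\Z)}{\nu_1(\cL_1)}\vol(I)$, exactly as in the derivation of Theorem~\ref{th:iet}. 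The analogous computation for $\cD^h_\Z$ yields $\epsilon^d\card(J^h_\epsilon)\to\tfrac{c(\cD^h_\Z)}{\nu_1(\cL_1)}\vol(J)$. Multiplying these two limits and using $\nu(X)=\vol(I)\cdot\vol(J)$ produces~\eqref{eq:uncorrelated:count}; the statement~\eqref{eq:uncorrelated:f} then follows by a standard approximation of $f$ by step functions supported on product boxes.

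The main technical obstacle is verifying the factorization step: one must check that $\Re$-invariance (respectively $\Im$-invariance), together with $\Q$-rationality of $L$, really does force the rescaled essential subsets $\cD^h_{\epsilon\Z}$ and $\cD^v_{\epsilon\Z}$ to be ``cylinders'' in the product chart --- that is, sets of the form $A\times(J\cap(\epsilon\Z)^d)$ and $(I\cap(\epsilon\Z)^d)\times B$ respectively. Once this geometric observation is in place, the remainder of the argument is merely an assembly of two applications of the previously established Theorem~\ref{th:equidistribution:spatial}.
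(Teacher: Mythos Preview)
Your proof is correct and follows essentially the same route as the paper: reduce to product boxes $X=I\times J$ in local period coordinates, use the $\Re$- and $\Im$-invariance to factorize the joint count as a product of two half-dimensional counts, and then control each factor by dividing out a full-lattice count and invoking Theorem~\ref{th:equidistribution:spatial}, exactly as in the proof of Theorem~\ref{th:iet}. Your write-up is in fact a bit more explicit than the paper's about the reduction to product boxes and about why the factorization holds, but the underlying argument is identical.
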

This result which can be thought of as a refinement of
Theorem~\ref{th:equidistribution:spatial} in the case of $\Re$ and
$\Im$-invariant essential lattices also has an equivalent statement
in terms of the ``unit hyperboloid'' $\cL_1$ as in
Theorem~\ref{th:equidistribution}.

In particular, the properties of a square-tiled surface in a given
connected component of a stratum of Abelian differentials to have
certain fixed separatrix diagram of the horizontal foliation and
certain fixed separatrix diagram of the vertical foliation, or to
have $i$ horizontal and $j$ vertical maximal cylinders in the
cylinder decomposition are uncorrelated: in these situations
equation~\eqref{eq:uncorrelated} is valid.

\begin{proof}
The proof is similar to the one of Theorem~\ref{th:iet}.

It is enough to consider sets $X$ of the form $I \times J$ where $I$
and $J$ are respectively open sets in the natural product structure
of $\cL$ provided by $\Re(\omega)$ and $\Im(\omega)$. For an open set
$X = I \times J$ the quantity
$\widetilde\cN_{\cD^h,\cD^v}(X,\epsilon)$ is exactly a product since
being in $\cD^h_\Z$ (respectively in
$\cD^v_\Z$) depends only on the coordinates in $I$
(respectively in $J$)):
$$
\widetilde\cN_{\cD^h,\cD^v}(X,\epsilon) =
\frac{\widetilde{\cN}_{\cD^h}(X, \epsilon)}{\card(J \cap \cD_{\epsilon \Z})} \times
\frac{\widetilde{\cN}_{\cD^v}(X,\epsilon)}{\card(I \cap \cD_{\epsilon \Z})}
$$
where $\widetilde{\cN}_{\cD^h}(X, \epsilon)$ and $\widetilde{\cN}_{\cD^v}(X,\epsilon)$ are the same
as in Theorem~\ref{th:equidistribution:spatial} and $\cD_\Z$ is the essential lattice of all square
tiled surfaces. Now by Theorem~\ref{th:equidistribution:spatial} we have
$$
\epsilon^d \frac{\widetilde{\cN}_{\cD^h}(X, \epsilon)}{\card(J \cap \cD_\Z)}
\to
\frac{c(\cD_\Z^h) \nu(X)}{\nu_1(\cL_1) \vol(J)}
$$
	as $\epsilon$ tends to $0$
A similar formula holds for the other term in the product, and the result follows from
the fact that $\nu(X) = \vol(I) \times \vol(J)$.

\end{proof}

\section{Contribution of $1$-cylinder square-tiled surfaces to Masur--Veech volumes}
\label{s:contribution:of:1:cylinder}

In   this   section  we  consider square-tiled surfaces and
pillowcase covers represented  by  a  \emph{single} maximal flat
cylinder $C$ filled by closed horizontal leaves and their
contributions to the Masur--Veech volumes of strata of Abelian
differentials and of meromorphic quadratic differentials with at most
simple poles.

In section~\ref{ss:Contribution:of:1:cylinder:diagrams} we state the
main results. Their proofs are postponed to
sections~\ref{ss:contribution:of:one:1:cylinder:diagram:computation}
and~\ref{ss:1:cylinder:diagrams:Abelian}. In
section~\ref{ss:Asymptotics:in:large:genera} we apply our results to
strata of Abelian differentials in large genus and discuss how they
compare with the conjectures on the large genus asymptotic behavior of
Masur--Veech volumes and with recent
results of D.~Chen, M.~M\"oller and
D.~Zagier. In
section~\ref{ss:Application:experimental:evaluation:of:MV:volumes} we
describe the experimental approach to the computation of Masur--Veech
volumes unifying our equidistribution and counting results.

We proceed in section~\ref{ss:contribution:of:one:1:cylinder:diagram:computation}
with a detailed discussion of relevant combinatorial aspects
and with a computation of the contribution of a single $1$-cylinder
separatrix diagram to the Masur--Veech volume of the ambient stratum
proving Propositions~\ref{pr:contribution:Abelian}
and~\ref{pr:contribution:quadratic}.

In section~\ref{ss:1:cylinder:diagrams:Abelian} we count the number
of $1$-cylinder diagrams for strata of Abelian differentials.
Combining our count with the result of
section~\ref{ss:contribution:of:one:1:cylinder:diagram:computation}
we derive very sharp
bounds~\eqref{eq:contribution:all:1:cyl:estimate} for the absolute
contribution of $1$-cylinder square-tiled surfaces to the
Masur--Veech volume claimed in
Theorem~\ref{th:contribution:all:1:cyl:estimate}. We also obtain
exact closed formulas for the absolute contributions
of $1$-cylinder square-tiled surfaces to the Masur--Veech volumes of
the minimal and principal strata stated in
Corollary~\ref{cor:total:contribution:min:and:principal}.

\subsection{Contribution of $1$-cylinder diagrams}
\label{ss:Contribution:of:1:cylinder:diagrams}

The volume of a stratum consists of contributions of $1$-cylinder
surfaces, 2-cylinder surfaces, etc.
\begin{eqnarray*}
\Vol \cH_1(m_1,\dots, m_\noz) & = & c_1+c_2+\dots\\
&= &  \sum_{\textrm{realizable }\; \cD} c(\cD)\,,
\end{eqnarray*}
where $c_i=c_i(m_1, \dots, m_\noz)$ is the contribution of the
surfaces with $i$ cylinders to the volume, and $c(\cD)=c(\cD_\Z)$ is
the contribution of a realizable separatrix diagram $\cD$ for the
stratum $\cH(m_1, \dots, m_\noz)$. The same decomposition holds for
the strata of quadratic differentials.

\begin{Proposition}
\label{pr:contribution:Abelian}
The   contribution   of   any   $1$-cylinder   orientable  separatrix
diagram    $\cD$    to   the   volume   $\Vol\cH_1(m_1,\dots,m_\noz)$
of   the   corresponding  stratum  of  Abelian  differentials  equals
\begin{equation}
\label{eq:contribution:numbered}
c(\cD)=\cfrac{2}{|\Gamma(\cD)|}\cdot
\cfrac{\mult_1!\cdot \mult_2! \cdots}{(d-2)!}\,\cdot \zeta(d)\,.
\end{equation}
Here  $|\Gamma(\cD)|$ is the order of the symmetry group of the
separatrix  diagram  $\cD$; $\mult_i$ is the number of zeroes of order
$i$,  i.e.  the  multiplicity  of  the  entry  $i$  in  the  set
$\{m_1,\dots,m_\noz\}$;      and      $d=\dim_\C
\cH(m_1,\dots,m_\noz)=2g+\noz-1$.  Speaking  of  the  volume  of  the
stratum  we  assume that the zeroes $P_1,\dots,P_\noz$ of the Abelian
differentials are numbered (labeled).
\end{Proposition}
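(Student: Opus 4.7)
My plan is to count 1-cylinder square-tiled surfaces in $\cH(m_1,\dots,m_\noz)$ sharing the combinatorics of $\cD$ and having area at most $a$, and then apply the definition~\eqref{eq:limit} of $c(\cD_\Z)$. A 1-cylinder square-tiled surface with diagram $\cD$ is determined, up to the natural action of $\Gamma(\cD)$, by positive integer lengths $\ell_1,\dots,\ell_n$ of the horizontal saddle connections, a positive integer height $h$ of the unique horizontal cylinder, and an integer twist $\phi\in\{0,1,\dots,w-1\}$, where $w=\ell_1+\dots+\ell_n$ is the length of the waist curve. The area equals $wh$, and counting the outgoing horizontal prongs at the zeros yields
\[
n=\sum_{i=1}^{\noz}(m_i+1)=2g-2+\noz=d-1.
\]

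Since we work in a stratum with numbered zeros, each unlabeled surface contributes $\mult_1!\mult_2!\cdots$ distinct labelings of the vertices of $\cD$ by $\{P_1,\dots,P_\noz\}$ respecting the orders. Hence
\[
\card\{S\in\cD_\Z\mid \Area(S)\le a\}
=\frac{\mult_1!\mult_2!\cdots}{|\Gamma(\cD)|}\sum_{h\ge 1}\sum_{\substack{\ell_1,\dots,\ell_n\ge 1\\ wh\le a}} w,
\]
where the factor $w$ enumerates the allowed twists $\phi$. Grouping compositions $\ell_1+\dots+\ell_n=w$ using $\binom{w-1}{n-1}$, then applying the identity $w\binom{w-1}{n-1}=n\binom{w}{n}$ together with the hockey-stick identity, the inner double sum reduces for each $h$ to $n\binom{\lfloor a/h\rfloor+1}{n+1}$. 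As $a\to\infty$ this is asymptotic to $(a/h)^{n+1}/((n+1)(n-1)!)$, and since the tail $h>a/n$ contributes negligibly, summing over $h\ge 1$ produces the Dirichlet-type asymptotic
\[
\card\{S\in\cD_\Z\mid \Area(S)\le a\}
\sim\frac{\mult_1!\mult_2!\cdots}{|\Gamma(\cD)|}\cdot\frac{a^{n+1}}{(n+1)(n-1)!}\cdot\zeta(n+1)
=\frac{\mult_1!\mult_2!\cdots\,\zeta(d)}{|\Gamma(\cD)|\cdot d\cdot(d-2)!}\cdot a^d,
\]
using $n+1=d$. Substituting into~\eqref{eq:limit} yields $c(\cD)=\frac{2\,\mult_1!\mult_2!\cdots\,\zeta(d)}{|\Gamma(\cD)|\,(d-2)!}$, which is~\eqref{eq:contribution:numbered}.

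The main delicacy is combinatorial rather than analytic: pinning down the precise meaning of $|\Gamma(\cD)|$ (what it does, and does not, permute), and verifying that square-tiled surfaces corresponding to parameters with non-generic stabilizer form a lower-dimensional sublocus of $(\ell,h,\phi)$-space that does not affect the leading-order count. Once this bookkeeping is in place, the content of the proof is the asymptotic lattice-point count above.
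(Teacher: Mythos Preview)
Your proof is correct and follows essentially the same approach as the paper: parametrize by the saddle-connection lengths $\ell_1,\dots,\ell_{d-1}$, the height $h$, and the twist $\phi\in\{0,\dots,w-1\}$, sum $w$ over compositions of $w$ and over heights, and extract the leading asymptotic $\zeta(d)\,a^{d}/\big(d\,(d-2)!\big)$ before applying the normalization in~\eqref{eq:limit}. The only cosmetic differences are that the paper replaces the composition count by the approximation $w^{d-2}/(d-2)!$ directly (rather than going through the exact binomial and hockey-stick identities) and introduces the labeling factor $\mult_1!\mult_2!\cdots$ at the end instead of at the outset.
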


For the case of quadratic differentials, consider a non-orientable measured foliation on a closed surface such
that  all  its  regular  leaves  are  closed  and  fill a single flat
cylinder.  Cut  the surface along all saddle connections to unwrap it
into  a  cylinder.  Every  saddle connection is presented exactly two
times  on the boundary of the resulting cylinder. Call any of the two
boundary   components  of  the  cylinder  the  ``top''  one  and  the
complementary  component  ---  the  ``bottom'' one. Denote by $l$ the
number of saddle connections which are presented once on top and once
on  the  bottom;  by  $m$  the number of saddle connections which are
presented  twice  on  the  top,  and  by  $n$  the  number  of saddle
connections  which are presented twice on the bottom. It is immediate
to  see  that  if  the  original flat surface belongs to some stratum
$\cQ(d_1,\dots,d_k)$  of  meromorphic quadratic differentials with at
most    simple    poles,    then    $l+m+n=d$,    where    $d=\dim_\C
\cQ(d_1,\dots,d_k)=2g+k-2$.  Since  the  measured  foliation  is  non
orientable, both $m$ and $n$ are strictly positive.

In the rest of the paper, for quadratic differentials, we normalize
the volume using the first convention in \S\ref{sect:dep}, that is,
as a lattice in $H^1_-(\hat S, \{\hat P_1, \dots, \hat P_k\}; \C)$,
we choose the subset of those linear forms which, restricted on
$H_1^-(\hat S, \{\hat P_1, \dots, \hat P_k\};\Z)$, take values in
$\Z\oplus i\Z$.

\begin{Proposition}
\label{pr:contribution:quadratic}
The  contribution  of  any  $1$-cylinder  non  orientable  separatrix
diagram    $\cD$    to    the    volume    $\Vol\cQ_1(d_1,\dots,d_k)$
of    the    corresponding    stratum    of   meromorphic   quadratic
differentials with at most simple poles equals:
\begin{equation}
\label{eq:general:contribution}
c(\cD)=
\cfrac{2^{l+2}}{|\Gamma(\cD)|}\cdot
\frac{(m+n-2)!}{(m-1)!(n-1)!}\,
\cdot\cfrac{\mult_{-1}!\cdot\mult_1!\cdot \mult_2! \cdots}{(d-2)!}\,
\cdot\zeta(d)\,.
\end{equation}
Here  $|\Gamma(\cD)|$ is the order of the symmetry group of the
separatrix  diagram  (ribbon graph) $\cD$; $\mult_{-1}$ is the number
of  simple  poles;  $\mult_i$  is  the number of zeroes of order $i$;
$d=\dim_\C \cQ(d_1,\dots,d_k)=2g+k-2$; $m$ and $n$ are the numbers of
saddle  connections  which  are  presented only on top (respectively, on
bottom) boundary components of the cylinder.

Defining  the symmetry group $\Gamma(\cD)$ we assume that none of the
vertices,  edges, or boundary components of the ribbon graph $\cD$ is
labeled;  however,  we  assume  that  the orientation of the ribbons is
fixed.  Defining the volume $\Vol\cQ_1(d_1,\dots,d_k)$ we assume that
the zeroes and poles are numbered (labeled).
\end{Proposition}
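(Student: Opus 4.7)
The plan is to parametrize 1-cylinder quadratic differentials with fixed separatrix diagram $\cD$ by geometric data, enumerate integer points in the lattice of Convention~\ref{conv:lattice} having area at most $N$, and extract the leading $N^d$ coefficient from the resulting count. The parameters are the saddle connection lengths $\ell_1,\dots,\ell_D$ (with $D=l+m+n=d$), the cylinder height $h$, and the twist $\phi\in[0,w)$, subject to the waist-matching constraint $\sum_{\text{top-twice}}\ell_j=\sum_{\text{bottom-twice}}\ell_j=:w_t$. Together with $w_s:=\sum_{\text{shared}}\ell_j$, the cylinder waist is $w=2w_t+w_s$ and the area is $wh$.

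Translating the lattice of Convention~\ref{conv:lattice} into conditions on $(\ell_j, h, \phi)$ requires analyzing the lifts of the 1-cylinder structure to the orientation double cover $\hat S$: a shared saddle connection on $S$ lifts to two antipodal saddle connections whose signed difference lies in $H_1^-$; a top-twice (or bottom-twice) saddle connection lifts to a single saddle connection that is $\sigma$-related to itself with reversed orientation; and the cylinder itself lifts to two disjoint cylinders swapped by $\sigma$, whose height and twist assemble into anti-invariant cycles with periods $2(\phi+ih)$. Keeping track of how each homology class pairs with $\hat\omega$ in $H^1_-$, and of the global normalization of the lattice, yields the precise half/integer spacings on $(\ell_j,h,\phi)$ and produces the numerical prefactor $2^{l+2}$.

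The combinatorial enumeration then counts compositions of $w_s$ and $w_t$: for each pair $(w_s,w_t)$ the numbers of admissible positive length tuples are $\binom{w_s-1}{l-1}$, $\binom{w_t-1}{m-1}$, and $\binom{w_t-1}{n-1}$, so
\begin{equation*}
c_\cD(w):=\sum_{w_s+2w_t=w}\binom{w_s-1}{l-1}\binom{w_t-1}{m-1}\binom{w_t-1}{n-1}\;\sim\;\frac{(m+n-2)!}{2^{m+n-1}(m-1)!(n-1)!(d-2)!}\,w^{d-2}
\end{equation*}
by identifying the sum as a Riemann sum for the Beta-function integral $B(l,m+n-1)=\frac{(l-1)!(m+n-2)!}{(d-2)!}$. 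Multiplying by the $w$ twist choices and summing over heights with $wh\le N$ converts the total count into a divisor sum, and the classical estimate $\sum_{n\le N}\sigma_{d-1}(n)\sim\zeta(d)N^d/d$ delivers both the factor $\zeta(d)$ and the correct power $N^d$.

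Finally, the multiplicative factor $\mu_{-1}!\mu_1!\mu_2!\cdots$ arises from promoting the unlabeled vertices of the ribbon graph $\cD$ to labeled singularities of the quadratic differential, while division by $|\Gamma(\cD)|$ corrects the overcounting induced by diagram automorphisms. Combining these with the normalization factor $2d$ from~\eqref{eq:limit} assembles the pieces into~\eqref{eq:general:contribution}. The main obstacle is the lattice analysis of the second paragraph: deriving the exact prefactor $2^{l+2}$ requires carefully tracking how the anti-invariant cohomology $H^1_-$ interacts with each of the three types of saddle connections and with the two cylinder lifts, so that the half-integer versus integer conditions distribute correctly across the parameters and all factors of $2$ conspire to reproduce precisely $2^{l+2}$.
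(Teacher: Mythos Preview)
Your approach coincides with the paper's: parametrize by lengths, height, and twist; count compositions under the waist constraint; reduce to a Beta integral; sum over heights to get $\zeta(d)$; then correct for labeling and automorphisms. The paper, however, does not derive the lattice spacings from a double-cover analysis---it simply asserts that under Convention~\ref{conv:lattice} all of $\ell_j, h, \phi$ lie in $\tfrac12\Z$, and it uses the area bound $wh\le N/2$ of Convention~\ref{con:area:1:2} rather than your $wh\le N$.

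Two concrete problems with the execution. First, your description of the lifts is off: every horizontal saddle connection---shared, top-twice, or bottom-twice alike---lifts to \emph{two} arcs on $\hat S$, since the cover is unramified over its interior; the anti-invariant relative cycle $\hat\gamma-\sigma_*\hat\gamma$ has period $2\ell$ in every case, giving $\ell\in\tfrac12\Z$ uniformly. The three types differ in how their lifts sit relative to the two lifted cylinders, not in the integrality condition they impose. Second, and more to the point, your powers of $2$ do not assemble. Taking paragraph~3 literally (integer compositions yielding the displayed $2^{-(m+n-1)}$ in $c_\cD(w)$, then $w$ twist choices, $wh\le N$, and the $2d$ normalization) and then multiplying by an extra $2^{l+2}$ leaves the answer too small by a factor $2^{m+n-2}$. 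In the paper the factor $2^{l+2}$ is not an isolated lattice prefactor but the net outcome of: the $2^{l-1}$ from half-integer compositions of the shared part $w_1$; the $2w$ twist choices; the substitutions $W=2w$, $H=2h$ together with the area bound $WH\le 2N$; and the $(W/2)^{d-2}$ arising in the Beta integral. Only after combining all of these with the final $\times 2$ does one see $2^{l+2}$. Your skeleton is correct, but the bookkeeping of where each power of $2$ lives needs to be redone along these lines.
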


Propositions~\ref{pr:contribution:Abelian}
and~\ref{pr:contribution:quadratic} are proved in
section~\ref{ss:contribution:of:one:1:cylinder:diagram:computation}.

Following    notation   of   \S A.2   in~\cite{Zagier}, denote by
$\mathbf{St}_n=\C^n/\C$  the  standard  irreducible representation of
dimension $n-1$ of the symmetric group $\mathfrak{S}_n$ and define
$$
\chi_j(g):=\operatorname{tr}(g,\pi_j)\qquad
\pi_j:=\wedge^j(\mathbf{St}_n)\qquad
(0\le j\le n-1)\,.
$$

\begin{Theorem}
\label{th:contribution:all:1:cyl}
The absolute contribution $c_1(m_1,\dots,m_\noz)$  of   all
$1$-cylinder orientable  separatrix diagrams   $\cD_\alpha$    to
the   volume $\Vol\cH_1(m_1,\dots,m_\noz)$ of   the   corresponding
stratum  of Abelian  differentials  equals
\begin{equation}
\label{eq:contribution:all:1:cyl}
 c_1(m_1,\dots,m_\noz)=
\frac{2}{n!}\cdot
\prod_k \frac{1}{(k+1)^{\mult_k}}\cdot
\sum_{j=0}^{n-1} j!\,(n-1-j)!\,\chi_j(\nu)
\,\cdot \zeta(n+1)\,.
\end{equation}
Here     $n=(m_1+1)+\dots+(m_\noz+1)=\dim_\C\cH(m_1,\dots,m_\noz)-1$;
$\nu\in \mathfrak{S}_n$ is any permutation which decomposes into cycles
of  lengths  $(m_1+1),\dots,(m_\noz+1)$;  $\mult_i$  is the number of
zeroes of order $i$, i.e. the multiplicity of the entry $i$ in the set
$\{m_1,\dots,m_\noz\}$. Speaking of the volume of
the  stratum  we  assume  that  the  zeroes $P_1,\dots,P_\noz$ of the
Abelian differentials are numbered (labeled).
\end{Theorem}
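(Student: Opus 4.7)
The plan is to combine the per-diagram volume formula of Proposition~\ref{pr:contribution:Abelian} with a character-theoretic count of $1$-cylinder separatrix diagrams. Setting $n := \sum_i(m_i+1) = 2g+r-2 = d-1$, Proposition~\ref{pr:contribution:Abelian} specializes to $c(\cD) = \tfrac{2\,\mu_1!\mu_2!\cdots\,\zeta(n+1)}{|\Gamma(\cD)|\,(n-1)!}$, so summing over all realizable $1$-cylinder orientable diagrams gives
\[
c_1(m_1,\dots,m_\noz) \;=\; \frac{2\,\mu_1!\mu_2!\cdots\,\zeta(n+1)}{(n-1)!}\cdot S,
\qquad S := \sum_{\cD}\frac{1}{|\Gamma(\cD)|}.
\]
A direct comparison with the target formula~\eqref{eq:contribution:all:1:cyl} reduces the theorem to proving the identity
\[
S \;=\; \frac{1}{n\,\prod_k(k+1)^{\mult_k}\,\prod_k\mult_k!}\,\sum_{j=0}^{n-1}j!\,(n-1-j)!\,\chi_j(\nu),
\]
which I would establish in two steps: a combinatorial reduction and a character-theoretic evaluation.

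First I would encode each $1$-cylinder orientable diagram by a pair of $n$-cycles. The $1$-cylinder hypothesis forces (via a ray-count at the zeros) the number of horizontal saddle connections to be exactly $n$; labelling them by $\{1,\dots,n\}$, the cyclic order along the top boundary of the cylinder defines an $n$-cycle $\sigma_t\in\mathfrak{S}_n$ and the cyclic order along the bottom an $n$-cycle $\sigma_b\in\mathfrak{S}_n$. Tracing around a vertex alternately along the top and the bottom identifies the zeros with the cycles of $\nu_\cD := \sigma_t^{-1}\sigma_b$, a zero of order $m$ producing a cycle of length $m+1$; in particular $\cD$ realizes $\cH(m_1,\dots,m_\noz)$ iff $\nu_\cD$ has cycle type $(m_1+1,\dots,m_\noz+1)$. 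The group $\mathfrak{S}_n$ acts on the saddle-labeled diagrams by simultaneous conjugation, and $|\Gamma(\cD)|$ is the stabilizer of the underlying diagram with both saddles and zeros unlabeled. Using that the centralizer of $\nu$ in $\mathfrak{S}_n$ has order $\prod_k(k+1)^{\mult_k}\mult_k!$, an orbit--stabilizer count yields
\[
S = \frac{N(\nu)}{\prod_k(k+1)^{\mult_k}\,\prod_k\mult_k!},
\qquad
N(\nu) := \#\{(\sigma_t,\sigma_b) : \sigma_t,\sigma_b\text{ are $n$-cycles in }\mathfrak{S}_n,\ \sigma_t^{-1}\sigma_b=\nu\}
\]
for any fixed $\nu$ of the given cycle type.

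Next I would evaluate $N(\nu)$ via the classical Frobenius formula for the number of solutions of $\alpha\beta=\nu$ with $\alpha,\beta$ in prescribed conjugacy classes of $\mathfrak{S}_n$. Taking both classes to be that of $n$-cycles gives
\[
N(\nu) = \frac{((n-1)!)^2}{n!}\sum_{\chi\in\widehat{\mathfrak{S}_n}}\frac{\chi(\alpha_0)^2\,\chi(\nu)}{\chi(1)},
\]
where $\alpha_0$ is any fixed $n$-cycle. By the Murnaghan--Nakayama rule, the character $\chi_\lambda$ of an irreducible representation of $\mathfrak{S}_n$ vanishes on an $n$-cycle unless $\lambda$ is a hook $(n-j,1^j)$ with $0\le j\le n-1$, in which case $\chi_\lambda(\alpha_0)=(-1)^j$ and $\chi_\lambda(1)=\binom{n-1}{j}$; these hook representations are precisely $\pi_j=\wedge^j\mathbf{St}_n$, with character $\chi_j$. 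The substitutions $((-1)^j)^2=1$ and $1/\binom{n-1}{j}=j!(n-1-j)!/(n-1)!$ then collapse the sum to $N(\nu)=\tfrac{1}{n}\sum_{j=0}^{n-1}j!(n-1-j)!\,\chi_j(\nu)$, and substituting back into $S$ produces~\eqref{eq:contribution:all:1:cyl}.

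The main obstacle will be the orbit--stabilizer bookkeeping of the first step: Proposition~\ref{pr:contribution:Abelian} implicitly treats the zeros $P_1,\dots,P_\noz$ as labeled while $|\Gamma(\cD)|$ counts symmetries of $\cD$ as an abstract (saddle- and zero-unlabeled) ribbon graph, and one has to align these conventions while tracking the factor $\prod_k\mult_k!$ accounting for labelings of equal-order zeros. Once $S$ is recast as a clean enumeration of pairs of $n$-cycles, the remaining Frobenius--Murnaghan--Nakayama computation is standard.
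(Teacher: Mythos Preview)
Your proposal is correct and follows essentially the same route as the paper: reduce to the weighted count $S=\sum_\cD 1/|\Gamma(\cD)|$ via Proposition~\ref{pr:contribution:Abelian}, encode $1$-cylinder diagrams by pairs of $n$-cycles whose quotient lies in the conjugacy class $C(\nu)$, and evaluate the resulting factorization count with the Frobenius formula using that only the hook representations $\pi_j=\wedge^j\mathbf{St}_n$ survive on an $n$-cycle. The paper packages the orbit--stabilizer step as the Lemma $\cN_1(m_1,\dots,m_\noz)=\tfrac{1}{n!}\cN(\mathfrak{S}_n;C(\sigma),C(\sigma),C(\nu))$ and invokes Zagier's Lemma~A.2.2 where you invoke Murnaghan--Nakayama, but these are the same arguments in slightly different language.
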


Applying  Theorem~\ref{th:contribution:all:1:cyl}  to  two particular
strata,  namely  to  the principal stratum and to the minimal one, we
get         a         close         expression        given        by
Corollary~\ref{cor:total:contribution:min:and:principal}.  For  other
strata     Theorem~\ref{th:contribution:all:1:cyl:estimate}     below
provides                           very                          good
estimate~\eqref{eq:contribution:all:1:cyl:estimate}               for
$c_1(m_1,\dots,m_\noz)$.

\begin{Corollary}
\label{cor:total:contribution:min:and:principal}
The absolute contribution  of all $1$-cylinder orientable separatrix
diagrams to  the  volume $\Vol\cH_1(1^{2g-2})$ of the principal
stratum and to the  volume  $\Vol\cH_1(2g-2)$  of  the  minimal
stratum  of Abelian differentials equals
\begin{align}
\label{eq:contribution:principal}
c_1(\underbrace{1,\dots,1}_{2g-2})=
\frac{\zeta(4g-3)}{4g-2}\cdot\frac{4}{2^{2g-2}}
\\
\label{eq:contribution:minimal}
c_1(2g-2)=\frac{\zeta(2g)}{2g}\cdot\frac{4}{2g-1}
\end{align}
\end{Corollary}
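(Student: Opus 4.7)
The plan is to obtain both closed forms by specializing formula~\eqref{eq:contribution:all:1:cyl} of Theorem~\ref{th:contribution:all:1:cyl} to the two strata and evaluating the resulting character sum explicitly. For the minimal stratum $\cH_1(2g-2)$ I take $\noz=1$, $n=2g-1$, so the prefactor $\prod_k(k+1)^{-\mult_k}$ reduces to $1/(2g-1)$ and $\nu\in\mathfrak{S}_n$ is a single $n$-cycle. For the principal stratum $\cH_1(1^{2g-2})$ I take $\noz=2g-2$, $n=4g-4$, so the prefactor becomes $2^{-(2g-2)}$ and $\nu$ is a product of $2g-2$ disjoint transpositions.

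Next I would read off the characters $\chi_j(\nu)$ from the eigenvalues of $\nu$ on $\mathbf{St}_n=\C^n/\C$. For an $n$-cycle the eigenvalues on $\C^n$ are the $n$-th roots of unity, so on $\mathbf{St}_n$ they are the nontrivial ones, and $\chi_j(\nu)$ equals the coefficient of $x^j$ in $(1-(-x)^n)/(1+x)$; this simplifies to $\chi_j(\nu)=(-1)^j$ for every $j$. For a product of $2g-2$ disjoint transpositions each factor contributes eigenvalues $\pm 1$ to $\C^n$, so on $\mathbf{St}_n$ one has $+1$ with multiplicity $2g-3$ and $-1$ with multiplicity $2g-2$; hence $\chi_j(\nu)=[x^j]\,(1+x)^{2g-3}(1-x)^{2g-2}$.

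The remaining step, and the main technical obstacle, is to evaluate the two sums $\sum_{j=0}^{n-1}j!(n-1-j)!\chi_j(\nu)$ in closed form. For this I would use the Beta-function identity $j!(n-1-j)!=n!\int_0^1 t^j(1-t)^{n-1-j}\,dt$ to rewrite each sum as
\[
n!\int_0^1(1-t)^{n-1}\,P\!\left(\tfrac{t}{1-t}\right)dt,
\]
where $P(x)=\sum_j\chi_j(\nu)\,x^j$. The hope is that the non-trivial algebraic simplification of $(1-t)^{n-1}P(t/(1-t))$ yields an elementary integrand. In the minimal case it collapses to $t^n+(1-t)^n$, whose integral over $[0,1]$ equals $2/(n+1)$, yielding the sum $(2g-1)!/g$. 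In the principal case, using $1+t/(1-t)=1/(1-t)$ and $1-t/(1-t)=(1-2t)/(1-t)$, the integrand reduces to $(1-2t)^{2g-2}$, which integrates via the substitution $u=1-2t$ to $1/(2g-1)$, giving the sum $(4g-4)!/(2g-1)$. Substituting these values together with the prefactors into~\eqref{eq:contribution:all:1:cyl} and simplifying factorials produces exactly~\eqref{eq:contribution:minimal} and~\eqref{eq:contribution:principal}.
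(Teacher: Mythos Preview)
Your proof is correct. Both your argument and the paper's start from Theorem~\ref{th:contribution:all:1:cyl} and identify the characters $\chi_j(\nu)$ exactly as you do (the paper cites Lemma~A.2.2 of~\cite{Zagier} for the $n$-cycle and the formula below~(A.26) there for the fixed-point-free involution, which match your eigenvalue computation).

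The genuine difference is in how the sum $\sum_{j=0}^{n-1} j!\,(n-1-j)!\,\chi_j(\nu)$ is evaluated. The paper treats the two strata by separate combinatorial identities: for $\cH(2g-2)$ it invokes the identity $\sum_{k=0}^m (-1)^k/\binom{x}{k}=\frac{x+1}{x+2}\bigl(1+(-1)^m/\binom{x+1}{m+1}\bigr)$ from~\cite{Gould}, and for $\cH(1^{2g-2})$ it proves a dedicated lemma combining two further identities~(4.22)--(4.23) from~\cite{Gould}. Your Beta-integral trick $j!\,(n-1-j)! = n!\int_0^1 t^j(1-t)^{n-1-j}\,dt$ replaces all of this by a single uniform device: since $P(x)=\sum_j\chi_j(\nu)x^j=\prod_i(1+\lambda_i x)$ has a factored form in both cases, the integrand $(1-t)^{n-1}P(t/(1-t))$ collapses algebraically to $t^n+(1-t)^n$ and $(1-2t)^{2g-2}$ respectively. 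Your route is more self-contained (no external identity tables) and makes transparent why the two answers are so clean; the paper's route has the advantage of staying purely in binomial-coefficient arithmetic.
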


Theorem~\ref{th:contribution:all:1:cyl}                           and
Corollary~\ref{cor:total:contribution:min:and:principal}  are  proved
in section~\ref{ss:1:cylinder:diagrams:Abelian}.

\begin{Example}
We recall in section~\ref{ss:separatrix:diagrams} that a square-tiled
surface  in  the  stratum $\cH(2)$ may have one of the two separatrix
diagrams   $\cD_1,   \cD_2$   shown   in   Figure~\ref{fig:diag} (Appendix~\ref{s:overview:of:MV:volumes}).  By
Theorem~\ref{th:sep:diagram:is:essential}   each   separatrix   diagram
defines   an   essential   lattice   subset.   Square-tiled  surfaces
corresponding  to  separatrix  diagrams $\cD_1,\cD_2$ have one or two
maximal  cylinders  filled  with  closed regular horizontal geodesics
respectively.   Direct  computations~\eqref{eq:D1}  and~\eqref{eq:D2}
reproduced  from~\cite{Zorich:square:tiled}  imply that the constants
$c(\cD_i)$  corresponding  to  the  induced essential lattice subsets
have values
\begin{equation}
\label{eq:contributions:1:2:for:H2}
 c(\cD_1)=\frac{2}{3!}\cdot \zeta(4)
\qquad\qquad
c(\cD_2)=\cfrac{2}{3!} \cdot \cfrac{5}{4} \cdot \zeta(4)\,.
\end{equation}
Note that the  separatrix  diagram  $\cD_1$  has  symmetry  of  order
$3$,  so $|\Gamma(\cD_1)|=3!$,       and       the       value
$c(\cD_1)$ matches~\eqref{eq:contribution:numbered}. We have :

\[\Vol\cH_1(2)=c(\cD_1)+c(\cD_2)=\frac{3}{4}\zeta(4)=\frac{\pi^4}{120}.\]

Morally,    Theorem~\ref{th:equidistribution}    implies    that    a
``random''  Abelian  differential  with  rational periods in any open
subset   in   $\cH(2)$   would   have  single  maximal  horizontal
cylinder   filling   the   entire   surface  with  probability  $4/9$
and   two   horizontal  cylinders  of  different  perimeters  filling
together the   entire  surface with probability $5/9$.
\end{Example}

The  next  example  shows  that the values of similar proportions for
more complicated strata become much more elaborate.

\begin{Example}
\label{ex:H31}
A  square-tiled surface in the stratum $\cH(3,1)$ might have from $1$
to  $4$  cylinders.  Taking  the  sums  of  $c(\cD_\alpha)$  for  $4$
one-cylinder   diagrams   in   the   stratum   $\cH(3,1)$,  $30$
two-cylinder  diagrams,  $44$  three-cylinder  diagrams, and $10$
four-cylinder  diagrams  (here  the  numbers  of  oriented separatrix
diagrams are given without any weights), and computing the
proportions, or probabilities
\[p_i(\cH(3,1))=\frac{c_i(3,1)}{\Vol\cH_1(3,1)}\] we get
(see~\cite{Zorich:square:tiled}):
\begin{equation*}
\begin{split}
 \prop_1(\cH(3,1))& =
\cfrac{3\,\zeta(7)}{16\,\zeta(6)}\approx 0.19 \,,  \\  & \\ 
 \prop_2(\cH(3,1))& =
\cfrac{55\,\zeta(1,6) + 29\,\zeta(2,5) + 15\,\zeta(3,4) + 8\,\zeta(4,3) + 4\,\zeta(5,2)}
    {16\,\zeta(6)}\approx 0.47 \,, \\  & \\ 
\prop_3(\cH(3,1))&  =
\cfrac{1}{32\,\zeta(6)}
\bigg( 12\,\zeta(6) - 12\,\zeta(7) + 48\,\zeta(4)\,\zeta(1,2) + 48\,\zeta(3)\,\zeta(1,3) \\ &
  + 24\,\zeta(2)\,\zeta(1,4) + 6\,\zeta(1,5) - 250\,\zeta(1,6) - 6\,\zeta(3)\,\zeta(2,2)
\\ &
 -  5\,\zeta(2)\,\zeta(2,3) +
6\,\zeta(2,4) - 52\,\zeta(2,5) + 6\,\zeta(3,3) - 82\,\zeta(3,4)
\\ &
 + 6\,\zeta(4,2) - 54\,\zeta(4,3) + 6\,\zeta(5,2) + 120\,\zeta(1,1,5) - 30\,\zeta(1,2,4)
\\ &
  - 120\,\zeta(1,3,3) - 120\,\zeta(1,4,2) - 54\,\zeta(2,1,4) - 34\,\zeta(2,2,3)
\\ &
  - 29\,\zeta(2,3,2) - 88\,\zeta(3,1,3) - 34\,\zeta(3,2,2) - 48\,\zeta(4,1,2)
\bigg)\approx 0.30  \,, \\  & \\ 
 \prop_4(\cH(3,1))& = \cfrac{\zeta(2)}{8\,\zeta(6)}
\,\bigg( \zeta(4) - \zeta(5) + \zeta(1,3) + \zeta(2,2) - \zeta(2,3) - \zeta(3,2) \bigg)\approx 0.04 \,.
\end{split}
\end{equation*}
\end{Example}

Note  that for separatrix diagrams $\cD_\alpha$ with $k>1$ cylinders,
the  contribution  $c(\cD_\alpha)$ of the diagram varies from diagram
to  diagram, and even in the example above the contribution of an
individual diagram is not necessarily reduced
to  a  polynomial in multiple zeta values with rational coefficients.

\begin{Question}
Is it true that the total contribution of all $k$-cylinder separatrix
diagrams  to  the volume of any stratum of Abelian differentials is a
polynomial  in  multiple  zeta values with rational (or even integer)
coefficients?
\end{Question}

\subsection{Asymptotics in large genera}
\label{ss:Asymptotics:in:large:genera}

Theorem~\ref{th:contribution:all:1:cyl}   combined   with   Theorem~2
in~\cite{Zagier:bounds} provides the following result which is proved
in section~\ref{ss:1:cylinder:diagrams:Abelian}.

\begin{Theorem}
\label{th:contribution:all:1:cyl:estimate}
The absolute  contribution   $c_1(m_1,\dots,m_\noz)$   of   all
$1$-cylinder orientable      separatrix      diagrams      to
the      volume $\Vol\cH_1(m_1,\dots,m_\noz)$    of    any    stratum
of   Abelian differentials satisfies the following bounds
\begin{equation}
\label{eq:contribution:all:1:cyl:estimate}
\frac{\zeta(d)}{d+1}\cdot
\frac{4}{(m_1+1)\dots(m_\noz+1)}
\le c_1(m_1,\dots,m_\noz)
\le
\frac{\zeta(d)}{d-\frac{10}{29}}
\cdot
\frac{4}{(m_1+1)\dots(m_\noz+1)}
,
\end{equation}
where $d=\dim_\C\cH(m_1,\dots,m_\noz)$.
\end{Theorem}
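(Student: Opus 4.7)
The plan is to reduce the claimed inequalities to a purely combinatorial bound on the alternating character sum appearing in Theorem~\ref{th:contribution:all:1:cyl}, and then invoke the cited result of Zagier.

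First I would simplify the exact formula. In Theorem~\ref{th:contribution:all:1:cyl} one has $n=(m_1+1)+\dots+(m_\noz+1)$, and since $\dim_\C\cH(m_1,\dots,m_\noz)=2g+\noz-1$ while $\sum m_i=2g-2$, it follows that $n=d-1$ and hence $\zeta(n+1)=\zeta(d)$. Regrouping the product over distinct values $k$ by multiplicity gives
\[
\prod_{k}\frac{1}{(k+1)^{\mult_k}}=\frac{1}{(m_1+1)(m_2+1)\cdots(m_\noz+1)}.
\]
Therefore the formula for $c_1$ takes the form
\[
c_1(m_1,\dots,m_\noz)=\frac{2\zeta(d)}{(m_1+1)\cdots(m_\noz+1)}\cdot\frac{1}{n!}\sum_{j=0}^{n-1}j!\,(n-1-j)!\,\chi_j(\nu),
\]
where $\nu\in\mathfrak{S}_n$ has cycle type $(m_1+1,\dots,m_\noz+1)$.

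Next, observe that the bounds claimed in \eqref{eq:contribution:all:1:cyl:estimate} are equivalent, after cancelling the common prefactor $\zeta(d)/((m_1+1)\cdots(m_\noz+1))$ and using $n=d-1$, to the assertion
\[
\frac{2}{n+2}\ \le\ \frac{1}{n!}\sum_{j=0}^{n-1}j!\,(n-1-j)!\,\chi_j(\nu)\ \le\ \frac{2}{n+\frac{19}{29}},
\]
uniformly over all permutations $\nu\in\mathfrak{S}_n$ (equivalently, over all cycle types, i.e. over all strata in dimension $d$). This is precisely the content of Theorem~2 of \cite{Zagier:bounds}, which I would now invoke verbatim: the notation $\chi_j=\operatorname{tr}(\,\cdot\,,\wedge^j\mathbf{St}_n)$ adopted in our statement is taken from §A.2 of \cite{Zagier} in order to match Zagier's conventions precisely, so no translation is required.

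The main technical obstacle is therefore not ours but is taken care of by Zagier: establishing that the ratio $S_\nu/n!$ of his alternating character sum to $n!$ is squeezed between $2/(n+2)$ and $2/(n+\tfrac{19}{29})$ uniformly in the cycle type of $\nu$. Our task reduces to the routine algebraic manipulation above, after which multiplying through by the prefactor $\zeta(d)\cdot 4/((m_1+1)\cdots(m_\noz+1))$ yields \eqref{eq:contribution:all:1:cyl:estimate} immediately.
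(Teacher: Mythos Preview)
Your proposal is correct and follows essentially the same route as the paper: reduce to Zagier's uniform bounds and read off the inequalities. The only cosmetic difference is that the paper's written proof does not quote Theorem~\ref{th:contribution:all:1:cyl} but instead reassembles $c_1$ directly from Proposition~\ref{pr:contribution:Abelian} and the counting Lemma~\eqref{eq:N:m1:mr}, introducing Zagier's quantity $R(\psi)$ (the number of factorizations of $\psi$ as a product of two $n$-cycles) and obtaining $c_1=\dfrac{R(\psi)}{(n-1)!}\cdot\dfrac{2\zeta(n+1)}{(m_1+1)\cdots(m_\noz+1)}$ before applying the bound $\dfrac{2(n-1)!}{n+2}\le R(\psi)\le\dfrac{2(n-1)!}{n+19/29}$. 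Since the Frobenius computation in the proof of Theorem~\ref{th:contribution:all:1:cyl} gives precisely $R(\nu)=\dfrac{1}{n}\sum_{j=0}^{n-1}j!\,(n-1-j)!\,\chi_j(\nu)$, your character-sum inequality and the paper's $R(\psi)$ inequality are literally the same statement; one minor caveat is that Theorem~2 in \cite{Zagier:bounds} is phrased in terms of $R(\psi)$ rather than the character sum, so strictly speaking a one-line translation (via the Frobenius identity above) is needed rather than ``no translation.''
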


To   discuss   the   asymptotic  behavior  of  the  \textit{relative}
contribution $\prop_1(\cH(m_1,\dots,m_\noz))$ for the strata of large
genera  we  need to recall the conjecture on asymptotics of volumes
of the  connected components of the strata of Abelian differentials,
see \cite{Eskin:Zorich} for details. Let $m=(m_1,\dots,m_\noz)$  be
an  unordered  partition  of  an even number $2g-2$,  i.e.,  let
$|m|=m_1+\dots+m_\noz=2g-2$. Denote by $\Pi_{2g-2}$ the
set              of             all             such partitions.

\begin{Conjecture}[{\cite[Main Conjecture 1]{Eskin:Zorich}}]
\label{conj:vol}
For any $m\in\Pi_{2g-2}$ one has
\begin{equation}
\label{eq:asymptotic:formula:for:the:volume}
\Vol\cH_1(m_1,\dots,m_\noz)=\cfrac{4}{(m_1+1)\cdot\dots\cdot(m_\noz+1)}
\cdot (1+\varepsilon_1(m)),
\end{equation}
where
$$
\lim_{g\to\infty} \max_{m\in\Pi_{2g-2}} \varepsilon_1(m) = 0.
$$
   %
\end{Conjecture}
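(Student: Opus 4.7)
The plan is to reduce Conjecture~\ref{conj:vol} to the companion conjecture stated in the introduction that $\prop_1(\cH(m_1,\dots,m_\noz)) \sim 1/d$ uniformly in the stratum as $g \to \infty$, where $d = \dim_\C\cH(m_1,\dots,m_\noz) = 2g+\noz-1$. Since by definition $c_1(m_1,\dots,m_\noz) = \prop_1(\cH)\cdot \Vol\cH_1(m_1,\dots,m_\noz)$, the two-sided bound of Theorem~\ref{th:contribution:all:1:cyl:estimate} combined with the trivial asymptotic $\zeta(d) = 1 + O(2^{-d})$ yields
\[
d\cdot c_1(m_1,\dots,m_\noz) \;=\; \frac{4}{(m_1+1)\cdots(m_\noz+1)}\bigl(1 + o(1)\bigr),
\]
the error being uniform in $(m_1,\dots,m_\noz)\in\Pi_{2g-2}$. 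Consequently
\[
\Vol\cH_1(m_1,\dots,m_\noz) \;=\; \frac{4}{(m_1+1)\cdots(m_\noz+1)} \cdot \frac{1+o(1)}{d\cdot \prop_1(\cH)},
\]
so the conjecture becomes equivalent to proving $d\cdot \prop_1(\cH)\to 1$ uniformly over $\Pi_{2g-2}$.

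The main obstacle is precisely this uniform asymptotic for $\prop_1$. Theorem~\ref{th:uncorrelated} supplies the conceptual input that the horizontal and vertical cylinder decompositions of a random square-tiled surface are asymptotically independent, so that $\prop_1$ has a genuine meaning as the asymptotic probability that a ``random'' square-tiled surface has a single horizontal cylinder. To control this probability one is led to compare the total count of $1$-cylinder separatrix diagrams — for which Theorem~\ref{th:contribution:all:1:cyl} gives the explicit sum $\sum_{j=0}^{n-1} j!(n-1-j)!\,\chi_j(\nu)$ — against the total weighted count of multi-cylinder diagrams, and the latter is controlled precisely by the volume itself. Breaking this apparent circularity is the crux of the problem and requires an independent asymptotic input of a genuinely new sort.

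Two routes appear feasible. The first is restricted to the principal stratum $\cH(1^{2g-2})$, where Corollary~\ref{cor:total:contribution:min:and:principal} gives the closed formula~\eqref{eq:contribution:principal} for $c_1$, while the work of Chen, M\"oller and Zagier~\cite{Chen:Moeller:Zagier} supplies a matching closed formula for $\Vol\cH_1(1^{2g-2})$ through intersection numbers on $\overline{\cM}_g$; comparing the two asymptotics term by term should dispose of this case, as claimed in the introduction. The second, far more speculative route aims at establishing $\prop_1\sim 1/d$ for all strata, for instance via a Siegel--Veech-style count of $1$-cylinder configurations against the mean cylinder count, or via a recursive structure on Rauzy classes generalizing the Chen--M\"oller--Zagier recursion. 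Producing such an intersection-theoretic or combinatorial identity uniformly in $(m_1,\dots,m_\noz)$ is the fundamental open problem on which the general case hinges, and I expect it to be the principal difficulty; absent it, only the principal-stratum case lies within the reach of currently available techniques.
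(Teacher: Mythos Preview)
Your proposal is an accurate analysis of the situation, but you should be aware that the paper does \emph{not} prove this statement either: it is labeled and treated as a conjecture (quoted from~\cite{Eskin:Zorich}), and remains open in general. What the paper does establish is precisely the equivalence you derive in your first two displayed equations---namely that, via the bounds of Theorem~\ref{th:contribution:all:1:cyl:estimate} and the relation $c_1 = \prop_1\cdot\Vol$, Conjecture~\ref{conj:vol} is equivalent to the uniform asymptotic $d\cdot\prop_1(\cH(m_1,\dots,m_\noz))\to 1$. This is exactly the content of Conditional Theorem~\ref{th:contribution:1:cyl:g:to:infty}, and your derivation of it is correct.

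Your assessment of what is currently provable also matches the paper: the principal stratum case follows by combining the closed formula~\eqref{eq:contribution:principal} with the volume asymptotics of Chen--M\"oller--Zagier~\cite{Chen:Moeller:Zagier}, and the general case remains open for exactly the reason you identify---no independent handle on $\prop_1$ (or equivalently on $\Vol$) is available beyond this stratum. One small remark: your appeal to Theorem~\ref{th:uncorrelated} is not needed for the logical reduction; the equivalence rests entirely on Theorem~\ref{th:contribution:all:1:cyl:estimate}, and the uncorrelatedness result plays no role here.
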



\begin{condTheorem}
\label{th:contribution:1:cyl:g:to:infty}
Denote  by  $d=\dim_\C \cH(m_1,\dots,m_\noz)=2g+\noz-1$ the dimension
of  the  stratum  $\cH(m_1,\dots,m_\noz)$  of  Abelian differentials in
genus  $g$.  Let  $\prop_1(\cH(m_1,\dots,m_\noz))$  be  the  relative
contribution of $1$-cylinder separatrix diagrams to the volume of the
stratum $\cH(m_1,\dots,m_\noz)$.

Conjecture~\ref{conj:vol} is equivalent to the following statement:
\begin{equation}
\label{eq:asymptotics:for:1:cyl}
d\cdot\prop_1(\cH(m_1,\dots,m_\noz)) \to 1 \text{ as }g\to+\infty\,,
\end{equation}
where the convergence is uniform
for all strata in genus $g$.
\end{condTheorem}

This Theorem is a straightforward corollary of Theorem~\ref{th:contribution:all:1:cyl:estimate}.

Note    that    for    the    principal    stratum   $\cH(1,\dots,1)$
Conjecture~\ref{conj:vol}    was    recently   proved   by   D.~Chen,
M.~M\"oller,   and  D.~Zagier~\cite{Chen:Moeller:Zagier}.  Thus,  the
Conditional Theorem \ref{th:contribution:1:cyl:g:to:infty} becomes
unconditional   for   the   principal strata.
Formula~\eqref{eq:contribution:principal} combined with the
asymptotic expansion evaluated in Theorem 19.2
in~\cite{Chen:Moeller:Zagier} provide the following statement.

\begin{Theorem}
The  relative
contribution $\prop_1(\cH(1^{2g-2}))$ of $1$-cylinder separatrix
diagrams to the volume of the principal stratum
of Abelian differentials
$\cH(1^{2g-2}))$
in genus $g$ satisfies the following asymptotic formula:
\begin{equation}
\label{eq:asymptotics:for:1:cyl:princ}
(4g-3)\cdot\prop_1(\cH(\underbrace{1,\dots,1}_{2g-2}))=
1+\frac{\pi^2-6}{24g} +o\left(\frac{1}{g}\right) \text{ as }g\to+\infty\,.
\end{equation}
\end{Theorem}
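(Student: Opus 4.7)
The plan is to combine two exact inputs: Corollary~\ref{cor:total:contribution:min:and:principal}, which gives the closed formula
$$
c_1(\underbrace{1,\dots,1}_{2g-2}) \;=\; \frac{\zeta(4g-3)}{4g-2}\cdot\frac{4}{2^{2g-2}}\,,
$$
and Theorem~19.2 of \cite{Chen:Moeller:Zagier}, which gives the full asymptotic expansion of $\Vol\cH_1(1^{2g-2})$ in powers of $1/g$. Since $\prop_1 = c_1/\Vol$ and the complex dimension of the principal stratum is $d = 2g + (2g-2) - 1 = 4g-3$, the target quantity factors cleanly as
$$
(4g-3)\,\prop_1(\cH(1^{2g-2})) \;=\; \frac{4g-3}{4g-2}\,\cdot\,\zeta(4g-3)\,\cdot\,\frac{4/2^{2g-2}}{\Vol\cH_1(1^{2g-2})}\,.
$$

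First I would dispose of the trivial factors. The Riemann zeta factor satisfies $\zeta(4g-3) = 1 + O(2^{-(4g-3)})$, a super-exponentially small correction that can be absorbed into the $o(1/g)$ remainder. The rational prefactor expands as
$$
\frac{4g-3}{4g-2} \;=\; 1 - \frac{1}{4g-2} \;=\; 1 - \frac{1}{4g} + O\!\left(\tfrac{1}{g^2}\right)\,.
$$

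The substantive step is to extract from \cite[Thm.~19.2]{Chen:Moeller:Zagier} the first two terms of the asymptotic expansion of the volume in the normalization used here, namely an expansion of the shape
$$
\Vol\cH_1(\underbrace{1,\dots,1}_{2g-2}) \;=\; \frac{4}{2^{2g-2}}\left(1 + \frac{a_1}{g} + o\!\left(\tfrac{1}{g}\right)\right),
$$
so that the ratio $(4/2^{2g-2})/\Vol\cH_1(1^{2g-2})$ equals $1 - a_1/g + o(1/g)$. Multiplying the three factors together yields
$$
(4g-3)\,\prop_1(\cH(1^{2g-2})) \;=\; 1 - \frac{1+4a_1}{4g} + o\!\left(\tfrac{1}{g}\right),
$$
and the claimed coefficient $(\pi^2-6)/24$ is obtained iff $a_1 = -\pi^2/24$, which is exactly what the CMZ expansion delivers.

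The main obstacle, and really the only nontrivial point, is the careful reconciliation of normalizations between \cite{Chen:Moeller:Zagier} and the present paper: the various conventions (area normalization, labeled versus unlabeled zeros, the factor $2d$ in \eqref{eq:nu1}, and the choice of integer lattice) are a recurrent source of spurious powers of $2$, and any mismatch here would introduce a wrong rational constant in front of $\pi^2$ in the final answer. Once the first subleading coefficient $a_1 = -\pi^2/24$ is verified in our normalization, the asymptotic identity follows by the elementary Taylor expansion above together with the one-line bound $0 \le \zeta(s) - 1 \le 2^{1-s}$ for $s \ge 2$.
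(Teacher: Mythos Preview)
Your proposal is correct and follows exactly the approach the paper indicates: combine formula~\eqref{eq:contribution:principal} for $c_1(1,\dots,1)$ with the asymptotic expansion of $\Vol\cH_1(1^{2g-2})$ from Theorem~19.2 of \cite{Chen:Moeller:Zagier}. The paper gives no further details beyond naming these two inputs, so your explicit Taylor computation---isolating the factors $\tfrac{4g-3}{4g-2}$, $\zeta(4g-3)$, and $(4/2^{2g-2})/\Vol$, and reading off $a_1=-\pi^2/24$ from the CMZ expansion---is precisely what is needed to fill in the argument, and your caution about matching normalizations is well placed.
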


  It  would  be  very interesting to find an argument proving
asymptotics~\eqref{eq:asymptotics:for:1:cyl}    for    $\prop_1(\cH)$
directly,    and    thus    prove    the    conjectural    asymptotic
formula~\eqref{eq:asymptotic:formula:for:the:volume}  for the volumes
of all strata.

Recall  that  some  strata  are not connected. However, all the above
results  can  be easily generalized to connected components. We start
with  the  hyperelliptic  connected  components $\cH^{hyp}(2g-2)$ and
$\cH^{hyp}(g-1,g-1)$,  which  are  always very special and do not fit
the  general picture. The situation is particularly simple with them.
The results  in~\cite{AEZ:genus:0} provide a simple closed formula for the
volume  of  these components. These volumes are completely negligible
with               respect               to               conjectural
volume~\eqref{eq:asymptotic:formula:for:the:volume}   of  the  entire
strata.  On the other hand, each hyperelliptic component has a unique
$1$-cylinder  separatrix diagram $\cD$, which has the cyclic symmetry
group    $\Gamma(\cD)$    of    order    $d-1$   (see~Proposition   5
in~\cite{Zorich:representatives}).  Thus,  the  contribution $c_1$ of
all     $1$-cylinder     diagrams  is basically    given    by
Proposition~\ref{pr:contribution:Abelian}.

\begin{Proposition}
\label{pr:proportion:hyp}
The  relative contribution $p_1$ of $1$-cylinder separatrix diagrams to
the volumes of the hyperelliptic components is given by the following
expressions:
\begin{align*}
\prop_1(\cH^{hyp}_1(2g-2))&=\cfrac{\zeta(2g)}{\pi^{2g}}\cdot
2g(2g+1)\cdot
\cfrac{(2g-2)!!}{(2g-3)!!} \sim
4\cdot\frac{g^{5/2}}{\pi^{2g-1/2}}\,.
\\
\prop_1(\cH^{hyp}_1(g-1,g-1))&=\cfrac{\zeta(2g+1)}{2\pi^{2g}}\cdot
(2g+1)(2g+2)\cdot
\cfrac{(2g-1)!!}{(2g-2)!!} \sim
4\cdot\frac{g^{5/2}}{\pi^{2g+1/2}}\,.
\end{align*}
\end{Proposition}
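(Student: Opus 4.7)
The plan is to combine Proposition~\ref{pr:contribution:Abelian} with the closed-form Masur--Veech volumes of the hyperelliptic components computed in~\cite{AEZ:genus:0}. Since each hyperelliptic component carries a \emph{single} $1$-cylinder separatrix diagram, $p_1$ reduces to the elementary ratio $c(\cD)/\Vol$, both factors of which have explicit formulas, so the task is purely algebraic.

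First, by Proposition~5 of~\cite{Zorich:representatives} each of $\cH^{hyp}(2g-2)$ and $\cH^{hyp}(g-1,g-1)$ carries a unique $1$-cylinder orientable separatrix diagram $\cD$ whose automorphism group is cyclic of order $d-1$, where $d=\dim_\C\cH^{hyp}$. Substituting into Proposition~\ref{pr:contribution:Abelian} with $(d,\mult_{2g-2})=(2g,1)$ in the minimal case and $(d,\mult_{g-1})=(2g+1,2)$ in the two-zero case, one obtains directly
$$
c_1\bigl(\cH^{hyp}(2g-2)\bigr)=\frac{2\,\zeta(2g)}{(2g-1)!},\qquad c_1\bigl(\cH^{hyp}(g-1,g-1)\bigr)=\frac{2\,\zeta(2g+1)}{g\,(2g-1)!}.
$$

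Next, I would divide by the known volumes from~\cite{AEZ:genus:0} (each equal to $\pi^{2g}$ times a rational expression in factorials and a central binomial coefficient, derived via the hyperelliptic double cover onto genus zero quadratic strata), and then repackage the resulting factorials via the elementary identities $(2g-2)!=(2g-2)!!\,(2g-3)!!$ and $(2g-1)!=(2g-1)!!\,(2g-2)!!$, transforming the ratio into the double-factorial form given in the statement. The asymptotic is then a standard Stirling computation: $\zeta(2g),\zeta(2g+1)\to 1$, and Stirling yields $(2g-2)!!/(2g-3)!!\sim\sqrt{\pi g}$ and $(2g-1)!!/(2g-2)!!\sim 2\sqrt{g/\pi}$; combined with the polynomial prefactors $2g(2g+1)$ and $(2g+1)(2g+2)$ one recovers the claimed $4g^{5/2}\pi^{-2g+1/2}$ and $4g^{5/2}\pi^{-2g-1/2}$ behavior.

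The main obstacle is combinatorial bookkeeping rather than conceptual. One must verify that the unique $1$-cylinder diagram of each hyperelliptic component really does have cyclic symmetry group of order $d-1$ (a consequence of the hyperelliptic involution acting as a rotation of the single cylinder), and one must carefully track the normalizations coming from the hyperelliptic double cover and from Convention~\ref{conv:lattice} in order to match the AEZ volume formula cleanly. Once these normalizations are settled the proof is pure algebraic simplification of an explicit ratio.
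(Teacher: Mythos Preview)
Your proposal is correct and follows essentially the same approach as the paper: quote the AEZ closed-form volumes of the hyperelliptic components, use the uniqueness of the $1$-cylinder diagram with $|\Gamma(\cD)|=d-1$ from~\cite{Zorich:representatives}, plug into Proposition~\ref{pr:contribution:Abelian}, and take the ratio. Your explicit intermediate values for $c_1$ and your Stirling asymptotics are correct; the only superfluous worry is about Convention~\ref{conv:lattice}, which concerns quadratic differentials and plays no role here.
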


Proposition~\ref{pr:proportion:hyp} shows that the resulting relative
contribution  $p_1$  of $1$-cylinder separatrix diagrams to the volumes
of the hyperelliptic components is completely negligible with respect
to~\eqref{eq:asymptotics:for:1:cyl}.  It  is  proved  in  the  end of
section~\ref{ss:1:cylinder:diagrams:Abelian}.

It     remains     to     consider     nonhyperelliptic    components
$\cH^{even}(2m_1,\dots,2m_\noz)$ and $\cH^{odd}(2m_1,\dots,2m_\noz)$.
Recall another conjecture from \cite{Eskin:Zorich}:


\begin{Conjecture}[{\cite[Conjecture 2]{Eskin:Zorich}}]
\label{conj:even:odd}
The ratio of volumes
of  even and odd components of strata $\cH(2m_1,\dots,2m_\noz)$ tends
to  $1$  uniformly for all partitions $m_1+\dots+m_\noz=g-1$ as genus
$g$ tends to infinity, i.~e.
$$
\lim_{g\to+\infty}\frac
{\Vol\cH^{even}(2m_1,\dots,2m_\noz)}
{\Vol\cH^{odd}(2m_1,\dots,2m_\noz)}
=1
$$
	uniformly in $m_1,\dots , m_\noz$.
\end{Conjecture}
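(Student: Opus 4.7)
The plan is to reduce Conjecture~\ref{conj:even:odd} to the component-wise version of Conjecture~\ref{conj:vol} by exploiting the representation-theoretic count of $1$-cylinder square-tiled surfaces in Theorem~\ref{th:contribution:all:1:cyl}, refined to distinguish the two spin parities. The starting identity is $\Vol\cH^{even/odd} = c_1^{even/odd}/\prop_1(\cH^{even/odd})$, so proving the two ratios $c_1^{even}/c_1^{odd}\to 1$ and $\prop_1(\cH^{even})/\prop_1(\cH^{odd})\to 1$ separately would establish the conjecture. These two steps have very different character.

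For the first ratio, I would split each $1$-cylinder separatrix diagram in $\cH(2m_1,\dots,2m_\noz)$ according to the Arf invariant of its induced spin structure. Since the spin invariant of a $1$-cylinder flat surface is determined combinatorially by the top-to-bottom pairing of saddle connections on the unique maximal cylinder, it should be encodable as a $\pm 1$-weight on the permutations $\nu\in\mathfrak{S}_n$ appearing in Theorem~\ref{th:contribution:all:1:cyl}, leading to a formula
$$
c_1^{even/odd}(2m_1,\dots,2m_\noz) \;=\; \tfrac{1}{2}\,c_1(2m_1,\dots,2m_\noz) \;\pm\; \tfrac{1}{2}\,\Delta(m_1,\dots,m_\noz),
$$
where $\Delta$ is a signed character sum parallel in form to that of Theorem~\ref{th:contribution:all:1:cyl}. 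Using the representation-theoretic bounds of~\cite{Zagier:bounds} that feed into Theorem~\ref{th:contribution:all:1:cyl:estimate}, I would argue that the sign cancellations in $\Delta$ force $\Delta(m_1,\dots,m_\noz)=o(c_1(2m_1,\dots,2m_\noz))$ uniformly over $\Pi_{2g-2}$ as $g\to\infty$, hence $c_1^{even}/c_1^{odd}\to 1$.

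For the second ratio, I would apply the component-wise analog of Conditional Theorem~\ref{th:contribution:1:cyl:g:to:infty}. Running the same argument separately on $\cH^{even}(2m_1,\dots,2m_\noz)$ and $\cH^{odd}(2m_1,\dots,2m_\noz)$, and using the component-wise version of Conjecture~\ref{conj:vol}, would yield $d\cdot\prop_1(\cH^{even/odd})\to 1$, and in particular $\prop_1(\cH^{even})/\prop_1(\cH^{odd})\to 1$. Multiplying this by the first ratio closes the chain and delivers Conjecture~\ref{conj:even:odd}.

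The main obstacle is that this last step invokes a component-wise refinement of Conjecture~\ref{conj:vol}, which is itself open and currently established only for the principal stratum through~\cite{Chen:Moeller:Zagier}. Consequently, the scheme above only produces a conditional reduction, parallel in spirit to Conditional Theorem~\ref{th:contribution:1:cyl:g:to:infty}. A secondary but still nontrivial difficulty is the uniform cancellation estimate for $\Delta$: while heuristically a $\pm 1$-weighted sum over $\mathfrak{S}_n$ should be dominated by its unsigned counterpart, making the bound uniform across all partitions $m\in\Pi_{2g-2}$ appears to demand representation-theoretic input sharper than the class-function estimates directly available in~\cite{Zagier:bounds}, and this is where I expect the bulk of the technical work to lie.
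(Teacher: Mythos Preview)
The statement you are attempting to prove is a \emph{conjecture}; the paper does not prove it and does not claim to. It is quoted from~\cite{Eskin:Zorich} and used only as an input to the Conditional Corollary~\ref{cor:even:odd:1:cyl}. So there is no ``paper's own proof'' to compare your proposal against.

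That said, your scheme has a structural circularity that makes it vacuous as a reduction. In Step~2 you invoke a ``component-wise version of Conjecture~\ref{conj:vol}'', i.e.\ an assertion that $\Vol\cH^{even}(2m_1,\dots,2m_\noz)$ and $\Vol\cH^{odd}(2m_1,\dots,2m_\noz)$ are each asymptotic to $\tfrac{2}{(2m_1+1)\cdots(2m_\noz+1)}$. But dividing these two asymptotics gives Conjecture~\ref{conj:even:odd} \emph{immediately}, with no need for any $1$-cylinder analysis, character sums, or the identity $\Vol=c_1/\prop_1$. Your Step~1 and the machinery around it become irrelevant once you grant yourself that hypothesis. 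In other words, the component-wise refinement of Conjecture~\ref{conj:vol} is at least as strong as Conjectures~\ref{conj:vol} and~\ref{conj:even:odd} combined, so you are assuming more than you are trying to prove.

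On Step~1 itself: the conclusion $c_1^{even}/c_1^{odd}\to 1$ is not something the paper leaves open for you to establish via a new signed-character argument. The paper explicitly cites this as a known theorem, namely~\cite[Theorem~4.19]{Delecroix}, immediately after stating Conjecture~\ref{conj:even:odd}. That result is exactly what allows the paper to formulate Conditional Corollary~\ref{cor:even:odd:1:cyl} as an \emph{equivalence} between the pair (Conjecture~\ref{conj:vol} restricted to even-degree strata, Conjecture~\ref{conj:even:odd}) and the statement $d\cdot\prop_1(\cH^{even/odd})\to 1$. Your proposal effectively re-derives one direction of this equivalence while assuming the other, which is not progress toward the conjecture.
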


By the result in~\cite[Theorem 4.19]{Delecroix}, the ratio of the weighted numbers
of $1$-cylinder separatrix diagrams in the connected components
$\cH^{even}_1(2m_1,\dots,2m_\noz)$ and $\cH^{odd}_1(2m_1,\dots,2m_\noz)$ also tends to $1$
uniformly for all partitions $m_1+\dots+m_\noz=g-1$ as genus
$g$ tends to infinity. Thus,  we  obtain the following statement.

\begin{condCorollary}
\label{cor:even:odd:1:cyl}
Conjecture~\ref{conj:even:odd} and Conjecture~\ref{conj:vol}
restricted  to  the  strata with zeroes of even  degrees  are
together  equivalent  to  the  following  statement: for any
partition  $(m_1,\dots,m_\noz)$  of $g-1$ into a sum of strictly
positive integers $m_1+\dots+m_\noz=g-1$ one has
\begin{align*}
d\cdot\prop_1(\cH^{even}(2m_1,\dots,2m_\noz)) &\to 1 \text{ as }g\to+\infty\\
d\cdot\prop_1(\cH^{odd}(2m_1,\dots,2m_\noz)) &\to 1 \text{ as }g\to+\infty\,,
\end{align*}
where  $d=2m_1+\dots+2m_\noz+\noz+1$  and  convergence is uniform for
all strata in genus $g$.
\end{condCorollary}


Since we do not want to overload the current paper, the questions
concerning the asymptotic proportions
$\prop_k(\cH(m_1,\dots,m_\noz))$ of   $k$-cylinder   diagrams   for
$k=2,3,\dots$  for  strata  of  high  genera will be address in a
separate paper. In this forthcoming paper we will treat, in particular,
the question of the dependence of $p_k$ on the genus and the
dimension of the stratum, and the question of the limit distribution
of $p_k$ with respect   to   all   possible   $k$   for  strata  of
large  genera.

\subsection{Application: experimental evaluation of the Masur--Veech volumes}
\label{ss:Application:experimental:evaluation:of:MV:volumes}
Let $\cH^{comp}$ (respectively, $\cQ^{comp}$) be a component of a stratum of Abelian differentials
(respectively, a component of a stratum of meromorphic
quadratic differentials with at most simple poles). We first present
a Monte-Carlo method to approximate $\prop_1(\cH^{comp})$
(respectively $\prop_1(\cQ^{comp})$). Pick  a random trajectory of
the generalized Rauzy induction in the  space of interval exchange
transformations (respectively, in the space  of linear involutions).
Stop at a random time, and take a small box  $\Pi$  around  the
endpoint  of the trajectory. Then collect the statistics  of
frequency of those interval exchange transformations whose
suspensions are filled with a single  vertical cylinder. By Theorem
\ref{th:iet} and Corollary \ref{cor:k:cyl:iet} (respectively Remark
\ref{rem:lin:invol}), this frequency gives an approximation of the
relative  contribution $\prop_1(\cH^{comp})$ (respectively
$\prop_1(\cQ^{comp})$)  of $1$-cylinder diagrams to the volume of the
chosen component of the stratum.

Now, one can perform an  exact  count  of  the weighted number of
$1$-cylinder separatrix   diagrams  (where  the  weight  is
reciprocal to the order of  the  symmetry  group  of  the
diagram). Applying Proposition~\ref{pr:contribution:Abelian}
(respectively, Proposition~\ref{pr:contribution:quadratic})   we
obtain  the  exact value $c_1(\cH^{comp})$ (respectively $c_1(\cQ^{comp})$) of  the  contribution  of
$1$-cylinder diagrams to the volume. Since,  we  already  know
approximately, what part of the total value makes  the  resulting
volume, we obtain an approximate value of the volume of the ambient
stratum. The experimental and theoretical values of the volumes of
low dimensional strata of quadratic differentials are compared in Appendix
\ref{s:tables:volumes}.

\subsection{Contribution of a single $1$-cylinder separatrix diagram: computation}
\label{ss:contribution:of:one:1:cylinder:diagram:computation}

Consider  Jenkins--Strebel  differentials
represented  by  a  \emph{single}  flat cylinder $C$ filled by closed
horizontal leaves. Note that all zeroes and poles (critical points of
the  horizontal  foliation)  of  such differential are located on the
boundary of this cylinder.

\begin{figure}[htb]
\includegraphics{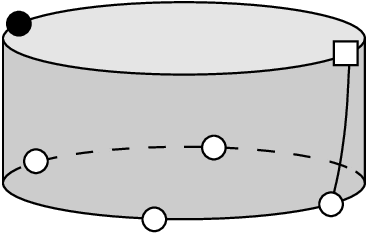}
\includegraphics{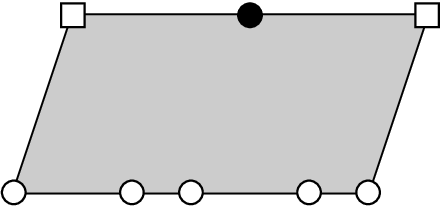}
\includegraphics{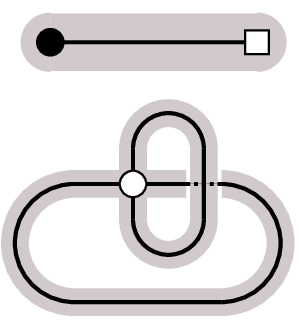}
\begin{picture}(0,0)(160,5)
\put(40,7){$X_1$}
\put(25,-10){$X_1$}
\put(17,-29){$X_3$}
\put(63,-27){$X_0$}
\put(0,-54){$X_2$}
\put(48,-56){$X_3$}
\put(76,-51){$X_2$}
\end{picture}
\begin{picture}(0,0)(40,-105)
\put(25,-110){$X_1$}
\put(69,-110){$X_1$}
\put(2,-168){$X_2$}
\put(24,-168){$X_3$}
\put(46,-168){$X_2$}
\put(67,-168){$X_3$}
\put(-16,-140){$X_0$}
\put(87,-140){$X_0$}
\end{picture}
\begin{picture}(0,0)(-73,-100)
\put(38,-100){$X_1$}
\put(2,-138){$X_2$}
\put(28,-155){$X_3$}
\end{picture}
\vspace{60bp}
\caption{
\label{fig:Jenkins:Strebel}
A  Jenkins--Strebel  differential  with a single cylinder,
one of its parallelogram  patterns, and its ribbon graph representation.
We have $l=0, m=1, n=2$. The stratum is $\cQ(2,-1^2)$.
}
\end{figure}

Each of the two boundary components $\partial C^+$ and $\partial C^-$
of  the cylinder is subdivided into a collection of horizontal saddle
connections $\partial C^+=X_{\alpha_1}\sqcup\dots\sqcup X_{\alpha_r}$
and  $\partial  C^-=X_{\alpha_{r+1}}\sqcup\dots\sqcup  X_{\alpha_s}$.
The  subintervals are naturally organized in pairs of
equal  length; subintervals in every pair are identified by a natural
isometry  which  preserves  the  orientation of the surface. Denoting
both subintervals in the pair representing the same saddle connection
by  the same symbol, we encode the combinatorics of identification of
the boundaries of the cylinder by two lines of symbols,

\begin{equation}
\label{eq:js:prepermutation}
\begin{picture}(0,0)(-2,0)
\put(-3,10){\vector(1,0){0}}
\put(-5,15){\oval(10,10)[bl]}
\put(30,15){\oval(80,10)[t]}
\put(65,15){\oval(10,10)[br]}
\put(-3,-4){\vector(1,0){0}}
\put(-5,-9){\oval(10,10)[tl]}
\put(40,-9){\oval(100,10)[b]}
\put(85,-9){\oval(10,10)[tr]}
\end{picture}
\begin{matrix}\alpha_1&\dots&\alpha_r&\\ \alpha_{r+1}\!\!&\dots\!&\!\dots\!&\!\alpha_s\end{matrix}
\vspace{8pt}
\end{equation}

\noindent
where the symbols in each line are organized in a cyclic order.
\medskip

\noindent\textbf{Choice of cyclic ordering.}
There  are  two  alternative conventions on the choice of this cyclic
order.  Note  that our surface is oriented (and not only orientable).
Hence,  this  orientation  induces  a  natural orientation of each of
$\partial  C^+$ and of $\partial C^-$ which defines a cyclic order on
the symbols labeling the segments.

Note  also  that  if  we have an Abelian differential, its horizontal
foliation  is  oriented.  The  corresponding  orientation  of  leaves
defines  the  same  cyclic  order as the previous one on one boundary
component  of the cylinder and the opposite cyclic order on the other
boundary component of the cylinder.

For  quadratic differentials the foliation is nonorientable. However,
for  a  Jenkins--Strebel  differential  we  can coherently choose the
orientation  of  all  regular  leaves in the interior of each maximal
cylinder,  and  it  induces  the  cyclic order of symbols labeling the
segments on $\partial C^+$ and $\partial C^-$. Similarly to the case of
Abelian  differentials, this  cyclic  ordering coincides with the one
induced  by  the  orientation  of  the  surface  on  one  of  the two
components  $\partial C^+, \partial C^-$ and provides the opposite cyclic
ordering on the other component.

In~\eqref{eq:js:prepermutation}
we  use  the  cyclic  ordering  coming  from  the orientation of the
foliation and not from the orientation of the surface.
\medskip

\noindent\textbf{Abelian versus quadratic differentials.}
By   construction,   every   symbol  appears  exactly  twice  in  two
lines~\eqref{eq:js:prepermutation}.  If  all the symbols in each line
are  distinct, the resulting flat surface has trivial linear holonomy
and  corresponds  to  an  Abelian  differential.  In  this case every
interval  on one side of the cylinder is identified with and interval
on  the  other  side  and vice versa, so there are no relations
between  the lengths of the intervals. In other words, any orientable
separatrix diagram having only two boundary components is realizable.

Otherwise, a flat metric of the resulting closed surface has holonomy
group  $\ZZ$;  in  the  latter  case  it corresponds to a meromorphic
quadratic  differential with at most simple poles. In this case there
is a linear relation between the lengths of the intervals: the sum of
lengths  of all intervals on one side of the cylinder is equal to the
sum  of  lengths of all intervals on the other side. This implies the
following  combinatorial  restriction: the set of symbols in one line
cannot  be a proper subset of the set of symbols on the complimentary
line.  This  condition  is  a  necessary  and sufficient condition of
realizability  for  a non-orientable separatrix diagram. For example,
the following combinatorial data

\begin{equation*}
\begin{picture}(0,0)(-2,0)
\put(7,10){\vector(1,0){0}}
\put(5,15){\oval(10,10)[bl]}
\put(25,15){\oval(50,10)[t]}
\put(45,15){\oval(10,10)[br]}
\put(7,-4){\vector(1,0){0}}
\put(5,-9){\oval(10,10)[tl]}
\put(40,-9){\oval(80,10)[b]}
\put(75,-9){\oval(10,10)[tr]}
\end{picture}
\begin{matrix}&1&2&3&\\ &3&4&1&2&4\end{matrix}
\vspace{8pt}
\end{equation*}
do  not  admit  any  strictly  positive  solution  for the lengths of
subintervals, while

\begin{equation*}
\begin{picture}(0,0)(-2,0)
\put(-3,10){\vector(1,0){0}}
\put(-5,15){\oval(10,10)[bl]}
\put(30,15){\oval(80,10)[t]}
\put(65,15){\oval(10,10)[br]}
\put(-3,-4){\vector(1,0){0}}
\put(-5,-9){\oval(10,10)[tl]}
\put(30,-9){\oval(80,10)[b]}
\put(65,-9){\oval(10,10)[tr]}
\end{picture}
\begin{matrix}5&1&2&3&\!5\\ 3&4&1&2&\!4\end{matrix}
\vspace{8pt}
\end{equation*}
admits   strictly   positive   solutions   satisfying   the  relation
$\ell_4=\ell_5$.
\bigskip

\noindent\textbf{Contribution of each individual $1$-cylinder separatrix diagram.}
Now everthing is ready for the proofs of
Propositions~\ref{pr:contribution:Abelian}
and~\ref{pr:contribution:quadratic}.

\begin{proof}[Proof of Proposition~\ref{pr:contribution:Abelian}]
An  orientable  $1$-cylinder  separatrix diagram $\cD$ representing a
stratum  of  Abelian differentials of complex dimension $d$ has $d-1$
separatrices (horizontal saddle connections). Denote the length of
the $i$-th separatrix by $\ell_i$. The  perimeter  $w$  of  the
cylinder  is  equal  to  the sum of the lengths          of
all         separatrices,         namely
$w=\ell_1+\ell_2+\dots+\ell_{d-1}$.  Denote  by $h$ the height of the
cylinder.   Finally,   denote   by   $\phi$   the   ``twist'',  where
$0\le\phi<w$.  The  number  of  square-tiled  surfaces  tiled with at
most  $N$  unit  squares  and  having $\cD$ as the separatrix diagram
equals
\begin{multline*}
\cfrac{1}{|\Gamma(\cD)|}\ \sum_{\substack{\ell_1,\dots,\ell_{d-1},h\in\N\\
w=\ell_1+\dots+\ell_{d-1}\\
w\cdot h\le N}}
w \ \approx \
\cfrac{1}{|\Gamma(\cD)|}\
\sum_{\substack{w,h\in\N\\w\cdot h\le N}}
w\cdot\cfrac{w^{d-2}}{(d-2)!}
\ =\
\cfrac{1}{|\Gamma(\cD)|}\
\cfrac{1}{(d-2)!}\ \sum_{\substack{w,h\in\N\\w\le \frac{N}{h} }} w^{d-1}
\ \approx \\ \approx \,
\cfrac{1}{|\Gamma(\cD)|}\
\cfrac{1}{(d-2)!}\, \sum_{h\in\N} \cfrac{1}{d}\,\cdot
\left(\cfrac{N}{h}\right)^d =\,
\cfrac{1}{|\Gamma(\cD)|}\
\cfrac{N^d}{(d-2)!}\,
\cfrac{1}{d}\,
\cdot\sum_{h\in\N} \cfrac{1}{h^d}
\, =\\= \,
\cfrac{1}{|\Gamma(\cD)|}\
\cfrac{1}{d}\cdot
\cfrac{N^d}{(d-2)!}\,\cdot
\zeta(d)\,,
\end{multline*}
(compare                     to~\eqref{eq:D1}).                    By
equation~\eqref{eq:volume:from:square:tiled}  the contribution of any
such  term  to  the  volume $\Vol\cH_1(m_1,\dots,m_\noz)$ of the stratum
with   \textit{unnumbered}  zeroes  is  computed  evaluating the derivative
$\left.2\cfrac{d}{dN}\right|_{N=1}$.   Thus,   the   contribution   of   the
$1$-cylinder  separatrix  diagram  $\cD$ to the volume of the ambiant
stratum is
\begin{equation*}
\label{eq:contribution:not:numbered}
\cfrac{1}{|\Gamma(\cD)|}\cdot
\cfrac{2}{(d-2)!}\,\cdot \zeta(d)\,.
\end{equation*}

Representing  the  set  $\{m_1,\dots,m_\noz\}$  as
$\{1^{\mult_1},2^{\mult_2},\dots\}$  we get the following formula for
the  contribution  of  an  individual  rooted  diagram  to the volume
$\Vol\cH^{numbered}_1(m_1,\dots,m_\noz)$    of    the    stratum    with
\textit{numbered} zeroes:
$$
\cfrac{2}{|\Gamma(\cD)|}\cdot
\cfrac{\mult_1!\cdot \mult_2! \cdots}{(d-2)!}\,\cdot \zeta(d)\,.
$$
which completes the proof of Proposition~\ref{pr:contribution:Abelian}.
\end{proof}

\begin{proof}[Proof of Proposition~\ref{pr:contribution:quadratic}]
The  evaluation of the contribution of an $1$-cylinder diagram to the
volume of a stratum of quadratic differentials is analogous. The only
difference  is  that  it  gets  an  extra  weight  depending  on  the
additional discrete parameters $l,m,n$ of the diagram.

Consider a nonorientable $1$-cylinder separatrix diagram. Each
separatrix (i.e. each horizontal saddle  connection) is represented
by two intervals on  the boundary of the cylinder. One may have one
interval on each of the two boundary  components,  both intervals on
the ``top'' boundary component of the cylinder, or both on the
``bottom'' boundary component. Recall that we denote the number   of
corresponding   saddle connections  by  $l,m,n$  correspondingly.

We start with a more general situation when $l>0$. Introduce
the following notation:
\begin{align*}
w_1&:=\ell_{i_1}+\dots+\ell_{i_l}\\
w_2&:=2(\ell_{j_1}+\dots+\ell_{j_m})=2(\ell_{k_1}+\dots+\ell_{k_n})\,,
\end{align*}
where  by  $\ell_{i_s}$,  $s=1,\dots,l$  we denote the lengths of the
segments  which  are  present on the both  sides  of the cylinder, by
$\ell_{j_s}$,  $s=1,\dots,m$  we  denote  the lengths of the segments
which are present only on top of the cylinder, and by $\ell_{k_s}$,
$k=1,\dots,n$  we  denote  the  lengths  of  the  segments  which are
present  only  on  the  bottom  of  the  cylinder.  For example, on
Figure~\ref{fig:Jenkins:Strebel} the segment  $X_1$ is present only on the
top,  the segments  $X_2,  X_3$  --- only on the bottom, and there are no
other  segments,  so  we have $l=0, m=1, n=2$.

In this notation the
length $w$ of the waist curve (perimeter) of the cylinder is equal to
$w=w_1+w_2$.  When  $l>0$  (that  is  when the boundary components of the
cylinder share at least one common interval) the waist curve $\gamma$
of  the  cylinder is not homologous to zero. Under our assumptions on
the normalization (see Convention~\ref{conv:lattice} for details)
the   lengths   $\ell_s$  of  all  subintervals  are
half-integers,  $w_1$ is a half-integer, $w_2$ is automatically an integer,
and $w$ is a half-integer.

The  leading term in the number of ways to represent $w_1$ as a sum
of $l$ half-integers
$$
w_1=\ell_{i_1}+\dots+\ell_{i_l}
$$
is
$$
2^{l-1}\cfrac{w_1^{l-1}}{(l-1)!}\,.
$$
The  leading term in the number of ways to represent $w_2$ as a sum
of $m$ (respectively $n$) integers
$$
w_2=2\ell_{j_1}+\dots+2\ell_{j_m}=2\ell_{k_1}+\dots+2\ell_{k_n}
$$
is
$$
\cfrac{w_2^{m-1}}{(m-1)!}
\qquad\left(\text{respectively }\
\cfrac{w_2^{n-1}}{(n-1)!}
\right)
\,.
$$

Denote by $h$ the half-integer height of our single cylinder and
introduce the integer parameter $H=2h$. The condition $w\cdot h\le
N/2$ on the area of the surface translates as $w\cdot H\le N$ in
terms of the parameter $H$. Thus, introducing the notation $W:=2w$,
we can represent the leading term in the corresponding sum as
\begin{multline*}
\sum_{\substack{w\in\frac{1}{2}\N\\H\in\N\\
w\cdot H\le N}}\
\sum_{\substack{w_2\in\N\\ w_2< w}}
2w\cdot 2^{l-1}\cfrac{(w-w_2)^{l-1}}{(l-1)!}
\cdot\cfrac{w_2^{m-1}}{(m-1)!}\cdot\cfrac{w_2^{n-1}}{(n-1)!}
=\\=
\frac{2^{l-1}}{(l-1)!(m-1)!(n-1)!}
\sum_{\substack{W, H\in\N\\W\cdot H\le 2N}} W
\sum_{w_2=1}^{\lfloor W/2\rfloor}(W/2-w_2)^{l-1}w_2^{m+n-2}
\sim \\ \sim
\frac{2^{l-1}}{(l-1)!(m-1)!(n-1)!}
 \cdot
 \sum_{H\in \N}\sum_{W=1}^{\lfloor{2N/H}\rfloor}
W\cdot
\left(\frac{W}{2}\right)^{l+m+n-2}
\cdot \int_0^1(1-u)^{l-1}u^{m+n-2}\,du
 \\ \sim
\frac{2^{l-1}}{(l-1)!(m-1)!(n-1)!}
\cdot \frac{(l-1)!(m+n-2)!}{(l+m+n-2)!}
\cdot\\
 \cdot \frac{1}{2^{l+m+n-2}} \cdot
\sum_{H\in\N} \cfrac{1}{l+m+n}\cdot\left(\cfrac{2N}{H}\right)^{l+m+n}
\\ =
\frac{2^{l+1}(m+n-2)!}{(m-1)!(n-1)!(l+m+n-2)!}\cdot
\cfrac{N^{l+m+n}}{l+m+n}\cdot\zeta(l+m+n)\,.
\end{multline*}
where we used the relation
$$
\int_0^1 u^a (1-u)^b\, du = \cfrac{a!\, b!}{(a+b+1)!}\,.
$$
Taking the derivative $\left.2\cdot\cfrac{d}{dN}\right|_{N=1}$ we get
the following contribution to the volume of the corresponding stratum
with \textit{anonymous} (\textit{non-numbered}) zeroes and poles:
\begin{equation*}
\frac{2^{l+2}(m+n-2)!}{(m-1)!(n-1)!(l+m+n-2)!}
\cdot\zeta(l+m+n)
\end{equation*}

Multiplying  the  result by the product of factorials responsible for
numbering    the    zeroes    and   poles,   we   get   the   desired
formula~\eqref{eq:general:contribution}.

In  the  remaining  particular  case  when  $l=0$  (that  is, when the
boundary  components  of  the  cylinder  do not share a single common
saddle  connection)  the  waist  curve  $\gamma$  of  the cylinder is
homologous  to zero, while $\hat\gamma$ is not. Under our assumptions
on the normalization,  the  lengths  $\ell_s$  of  all  subintervals are
half-integers,  and  $w=w_2$  is  automatically an integer, as it should
be. Performing a completely analogous computation we get a particular
case of formula~\eqref{eq:general:contribution} where $l=0$.
\end{proof}

\subsection{Counting $1$-cylinder diagrams for strata of Abelian
differentials based on Frobenius formula and Zagier bounds}
\label{ss:1:cylinder:diagrams:Abelian}

Enumeration  of  orientable  $1$-cylinder  separatrix
diagrams     through     Frobenius     formula     was     elaborated
in~\cite{Delecroix}.  Consider  some stratum of Abelian differentials
$\cH(m_1,\dots,m_\noz)$. Let
\begin{equation}
\label{eq:n}
n=\sum_{i=1}^r  (m_i+1)=2g-2+r=\dim_\C\cH(m_1,\dots,m_\noz)-1\,.
\end{equation}
Denote  by  $C(\psi)$  the conjugacy class of a permutation $\psi$ in
the  symmetric  group  $\mathfrak{S}_n$;  denote  by  $C(\sigma)$ the
conjugacy  class  of the cyclic permutation $\sigma=(1,2,\dots,n)$ in
$\mathfrak{S}_n$. Finally, denote by $C(\nu)$  the  conjugacy  class
of the product of $r$ cycles of lengths $(m_1+1,\dots,m_\noz+1)$.

Following~\cite{Zagier}                   denote                   by
$\cN(\mathfrak{S}_n;C(\sigma),C(\sigma),C(\nu))$    the    number    of
solutions  of the equation $c_1 c_2 c_3 =1$, where the permutations $c_1$
and   $c_2$  belong  to  the  conjugacy  class  $C(\sigma)$  and  the
permutation $c_3$ belongs to the conjugacy class $C(\nu)$:
\begin{multline}
\label{eq:N:C:C:C}
\cN(\mathfrak{S}_n;C(\sigma),C(\sigma),C(\nu))
=\\=
\#\{(c_1,c_2,c_3)\in C(\sigma)\times C(\sigma)\times C(\nu)\,|\,
c_1 c_2 c_3 =1\}\,.
\end{multline}

Every  such  solution  defines  a    $1$-cylinder  separatrix diagram
corresponding to the stratum $\cH(m_1,\dots,m_\noz)$. Indeed, consider a
horizontal  cylinder $S^1\times [0;1]$ such that each of its boundary
components  is  subdivided into $n$ segments. Choose the orientation of
the  boundary  components  induced by the orientation of the circle
$S^1$  (on  one of the two components it differs from the orientation
induced  from the orientation on the cylinder) and assign labels from
$1$  to $n$ to the subintervals of one boundary component in such a way
that  they  appear  in  the cyclic order $c_1$, and assign labels to the
remaining boundary component in such a way that they appear in the cyclic
order  $c^{-1}_2$.  Cut the cylinder along the horizontal waist curve
and  identify  pairs  of  subintervals  on  the  boundary  components
carrying  the  same  labels  respecting  the orientation induced from
$S^1$. Consider the $1$-cylinder separatrix diagram $\cD$ represented
by   the  resulting  ribbon  graph.  The  relation  $c_1\cdot  c_2  =
c^{-1}_3$,  where  $c_3\in C(\nu)$, guarantees that $\cD$ corresponds
to the stratum $\cH(m_1,\dots,m_\noz)$.

\begin{Example}(See~\cite{Zorich:representatives} for details.)
Consider the pair of cyclic permutations
$c_1=(1,2,3,4,5,6,7,8)$ and
$c_2=(4,3,2,5,8,7,6,1)$
in $\mathfrak{S}_8$. The two boundary components of the corresponding
horizontal cylinder get the following labeling:

\begin{equation}
\label{eq:H1111:cyclic}
\begin{picture}(-4,6)
\put(-3,10){\vector(1,0){0}}
\put(-5,15){\oval(10,10)[bl]}
\put(70,15){\oval(160,10)[t]}
\put(145,15){\oval(10,10)[br]}
\put(-3,-4){\vector(1,0){0}}
\put(-5,-9){\oval(10,10)[tl]}
\put(70,-9){\oval(160,10)[b]}
\put(145,-9){\oval(10,10)[tr]}
\end{picture}
\begin{matrix}
1\to 2\to 3\to 4\to 5\to 6\to 7\to 8\\
4\to 3\to 2\to 5\to 8\to 7\to 6\to 1
\end{matrix}
\vspace{8pt}
\end{equation}

The    corresponding    translation   surface   is   represented   in
Figure~\ref{fig:merging:zeroes}  in two different ways: as a cylinder
(rather a parallelogram) with pairs of corresponding sides identified
by  parallel translations and as a ribbon graph (separatrix diagram).
The  core  of  the  corresponding  ribbon  graph has four vertices of
valence  four  representing  four  conical  singularities  of  angles
$4\pi$, or, equivalently, four simple zeroes of the resulting Abelian
differential.  Each  edge of the ribbon graph represents a horizontal
saddle   connection   (separatrix).   Turning   around  zeroes  in  a
counterclockwise  direction,  see Figure~\ref{fig:merging:zeroes}, we
see  the  incoming  horizontal  separatrix  rays appear in the cyclic
orders  given  by  the  cyclic  decomposition of $c_1\cdot c_2^{-1}$,
namely
$$
c_1\cdot c_2^{-1}=(1,3)(2,4)(5,7)(6,8)
$$

\begin{figure}[htb]
\includegraphics{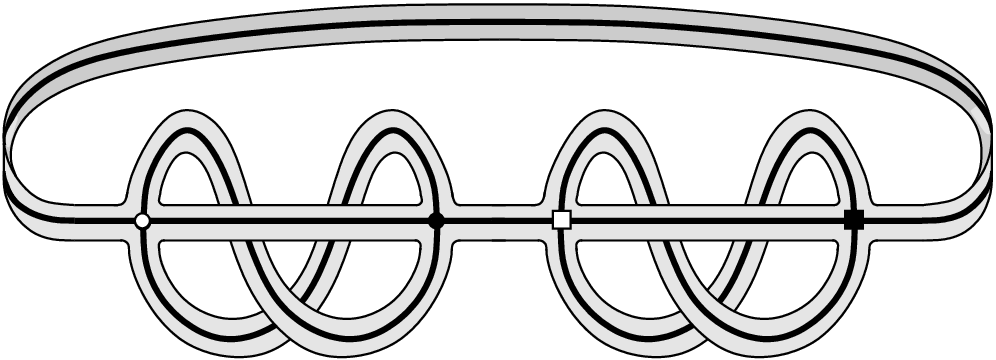}
\includegraphics{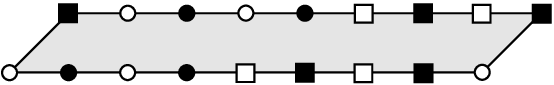}
\begin{picture}(0,0)(35,-89)
\put(-88,-138){$X_1$}
\put(-59,-112){$X_2$}
\put(-32,-138){$X_3$}
\put(-1,-112){$X_4$}
\put(30,-138){$X_5$}
\put(61,-112){$X_6$}
\put(90,-138){$X_7$}
\put(120,-112){$X_8$}
\put(146,-138){$X_1$}
\end{picture}

\begin{picture}(0,0)(3,-13)
\begin{picture}(0,0)(0,5)
\put(-88,-112){1}
\put(-60,-112){2}
\put(-32,-112){3}
\put(-4,-112){4}
\put(24,-112){5}
\put(53,-112){6}
\put(82,-112){7}
\put(110,-112){8}
\end{picture}
\begin{picture}(0,0)(28,5)
\put(-89,-155){4}
\put(-60,-155){3}
\put(-32,-155){2}
\put(-4,-155){5}
\put(24,-155){8}
\put(53,-155){7}
\put(82,-155){6}
\put(110,-155){1}
\end{picture}
\begin{picture}(0,0)(3,5)
\put(-122,-131){0}
\put(115,-136){0}
\end{picture}
\end{picture}

\vspace{150bp}
\caption{
\label{fig:merging:zeroes}
The  ribbon  graph  representation of a Jenkins--Strebel differential
with   a   single   cylinder   (top   picture)  versus  the  cylinder
representation  (bottom  picture).  All vertices marked with the same
symbols are identified to a single conical singularity. }
\end{figure}
\end{Example}

It  is  clear  that  a simultaneous conjugation of permutations $c_1,
c_2,  c_3$  by  the same permutation does not change the $1$-cylinder
diagram.  In  particular, we can choose $c_1=\sigma$. Note also, that
our  diagrams  do  not  have any distinguished (marked) intervals. We
have $|C(\sigma)|=(n-1)!$ for cardinality of $C(\sigma)$, and we have
$n$  ways  to  attribute  index  $1$  to  one of the intervals at the
bottom.    Thus,    we    have    proved    the    following    Lemma
from~\cite{Delecroix}:

\begin{Lemma}
The  weighted  number $\cN_1(m_1,\dots,m_\noz)$ of $1$-cylinder diagrams
$\cD$  for  a given stratum $\cH(m_1,\dots,m_\noz)$, where the weight is
the  inverse of the order of the group of symmetries, is expressed as
\begin{equation}
\label{eq:N:m1:mr}
\cN_1(m_1,\dots,m_\noz)
=\sum_{\substack{\text{One-cylinder}\\ \text{diagrams }\cD\\
\text{in the stratum}\\ \cH(m_1,\dots,m_\noz)}}
\frac{1}{|\Gamma(\cD)|}
=
\cfrac{1}{n!}\cdot
\cN(\mathfrak{S}_n;C(\sigma),C(\sigma),C(\nu))
\end{equation}
\end{Lemma}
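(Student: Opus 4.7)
The plan is to exhibit a bijection between triples $(c_1,c_2,c_3)\in C(\sigma)\times C(\sigma)\times C(\nu)$ satisfying $c_1 c_2 c_3 = 1$ and \emph{labeled} $1$-cylinder separatrix diagrams for the stratum $\cH(m_1,\dots,m_\noz)$, and then to quotient by the $\mathfrak{S}_n$-action of relabeling. Given such a triple, I follow the construction already described in the excerpt: take a horizontal cylinder whose top and bottom boundary circles are each subdivided into $n$ segments; label the top segments $1,\dots,n$ so that they appear in the cyclic order encoded by $c_1$ and label the bottom segments so that they appear in the cyclic order encoded by $c_2^{-1}$. Identify the two copies of each label by the unique orientation-reversing isometry (after choosing the natural leaf orientation on each boundary, as discussed in the ``Choice of cyclic ordering'' paragraph). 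The resulting flat surface is a $1$-cylinder Jenkins--Strebel differential whose zeroes correspond to the vertices of the associated ribbon graph, i.e.\ to the cycles of $c_1 c_2^{-1}$. Under the constraint $c_1 c_2 c_3 = 1$ we have $c_1 c_2^{-1}$ conjugate to $c_3^{-1}\in C(\nu)$, so the cones at the zeroes have angles $2\pi(m_i+1)$, placing the diagram exactly in $\cH(m_1,\dots,m_\noz)$.

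Next I would verify that this map from triples to labeled diagrams is a bijection: injectivity is immediate because the labeled cyclic orders on the boundary determine $c_1$ and $c_2$, and surjectivity amounts to observing that any labeled $1$-cylinder diagram for the stratum recovers a unique triple by reading off the cyclic orders and setting $c_3 = (c_1 c_2)^{-1}$, which automatically lies in $C(\nu)$ by the angle computation just sketched. Thus
\[
\cN(\mathfrak{S}_n;C(\sigma),C(\sigma),C(\nu))
\;=\;\#\{\text{labeled $1$-cylinder diagrams in }\cH(m_1,\dots,m_\noz)\}.
\]

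Now I pass to unlabeled diagrams. The group $\mathfrak{S}_n$ acts on labelings (equivalently on triples, by simultaneous conjugation of the three components), and two labelings yield isomorphic ribbon graphs if and only if they lie in the same orbit. The stabilizer of a labeling is, by definition, the automorphism group $\Gamma(\cD)$ of the underlying unlabeled diagram, so by orbit--stabilizer the orbit of a labeled diagram $\cD$ has size $n!/|\Gamma(\cD)|$. Summing over isomorphism classes of unlabeled $1$-cylinder diagrams,
\[
\cN(\mathfrak{S}_n;C(\sigma),C(\sigma),C(\nu))
\;=\; \sum_{\cD}\frac{n!}{|\Gamma(\cD)|},
\]
and dividing by $n!$ gives the desired weighted count $\cN_1(m_1,\dots,m_\noz)$.

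The only genuinely delicate step is the identification of the stabilizer of a labeling under simultaneous conjugation with the automorphism group $\Gamma(\cD)$ of the ribbon graph. I would make this precise by noting that a simultaneous conjugation fixing $(c_1,c_2)$ is exactly a relabeling of the $n$ edges of $\cD$ that preserves both cyclic orders on the boundary of the cylinder, hence extends to a ribbon-graph automorphism of $\cD$; conversely any such automorphism permutes the $n$ edges and is recorded by the conjugating permutation. The ``Choice of cyclic ordering'' conventions are the same on both sides, so no spurious factors of $2$ enter. Everything else (the description of the bijection, and the orbit--stabilizer counting) is then a routine combinatorial bookkeeping in the spirit of Frobenius' formula, with no analytic input.
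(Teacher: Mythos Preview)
Your proof is correct and follows the same approach as the paper (whose argument is the discussion immediately preceding the Lemma): build the map from triples to labeled $1$-cylinder diagrams, then quotient by the $\mathfrak{S}_n$-action of simultaneous conjugation --- you invoke orbit--stabilizer directly, while the paper fixes $c_1=\sigma$ (a factor of $|C(\sigma)|=(n-1)!$) and then notes the residual $n$ cyclic relabelings of the bottom, which is the same $n!$ overcount phrased differently. One small slip to correct: from $c_1c_2c_3=1$ you get $c_1c_2=c_3^{-1}$, so it is $c_1c_2$ (not $c_1c_2^{-1}$) that lies in $C(\nu)$ and whose cycle structure records the cone angles; the paper's own exposition wobbles between the two expressions, but only $c_1c_2$ is forced by the constraint as written.
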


Now we are ready to prove Theorem~\ref{th:contribution:all:1:cyl:estimate}.

\begin{proof}[Proof of Theorem~\ref{th:contribution:all:1:cyl:estimate}]
Following~\cite{Zagier:bounds} denote by $R(\psi)$ the number of
ways to represent an even permutation $\psi$ in $\mathfrak{S}_n$
as a product of two $n$-cycles. Clearly,
\begin{equation}
\label{eq:N:through:R}
\cN(\mathfrak{S}_n;C(\sigma),C(\sigma),C(\psi)=R(\pi)\cdot|C(\psi)|\,.
\end{equation}
From now on choose any $\psi\in C(\nu)$, where $C(\nu)$ is
the conjugacy class of the
product of $r$ cycles of lengths $m_1+1,\dots,m_\noz+1$
respectively. The cardinality of $C(\psi)$ is given by
\begin{equation}
\label{eq:card:C:m}
|C(\psi)|=|C(\nu)|=n!\cdot\prod_k \frac{1}{\mult_k! (k+1)^{\mu_k}}\,,
\end{equation}
where  $\mult_k$  is  the  multiplicity of the entry $k=1,2,\dots$ in
$(m_1,\dots,m_\noz)$.

Denote  by  $c(m_1,\dots,m_\noz)$  the  absolute
contribution of all $1$-cylinder
diagrams    to    the   volume   $\Vol\cH_1(m_1,\dots,m_\noz)$   as   in
equation~\eqref{eq:contribution:all:1:cyl:estimate}              from
Theorem~\ref{th:contribution:all:1:cyl:estimate}.     Recall     that
$d=\dim\cH(m_1,\dots,m_\noz)=n+1$.

Nesting~\eqref{eq:card:C:m}                 in~\eqref{eq:N:through:R}
in~\eqref{eq:N:m1:mr}      and      combining     it     with     the
formula~\eqref{eq:contribution:numbered}                         from
Proposition~\ref{pr:contribution:Abelian}  for the contribution of an
individual    $1$-cylinder    diagram    to   the   volume   we   get
\begin{multline*}
c(m_1,\dots,m_\noz)=
\frac{1}{n!}
\cdot
\left(n!\cdot\prod_k \frac{1}{\mult_k! (k+1)^{\mu_k}}\right)
\cdot R(\psi)
\cdot
\cfrac{\mult_1!\cdot \mult_2! \cdots}{(n-1)!}\,\cdot 2\zeta(n+1)
=\\=
\frac{R(\psi)}{(n-1)!}\cdot
\frac{2\zeta(n+1)}{(m_1+1)\cdot\dots\cdot(m_\noz+1)}\,.
\end{multline*}
By Theorem~2 in~\cite{Zagier:bounds} the following universal
bounds are valid:
$$
\frac{2(n-1)!}{n+2}\le R(\psi) \le
\frac{2(n-1)!}{n+\frac{19}{29}}\,.
$$
Plugging these bounds in the latter expression for $c(m_1,\dots,m_\noz)$
in  terms  of  $R(\psi)$  and returning to notation $d=n+1$ we obtain the
bounds~\eqref{eq:contribution:all:1:cyl:estimate}                from
Theorem~\ref{th:contribution:all:1:cyl:estimate}.
\end{proof}
\medskip

\noindent\textbf{Frobenius formula.}
We      now      apply      Frobenius      formula      to      prove
Theorem~\ref{th:contribution:all:1:cyl}    and   then   we   evaluate
explicitly  the  contribution  of  all  $1$-cylinder  diagrams to the
volume of the ambient stratum for the minimal stratum $\cH(2g-2)$ and
for   the   principal   stratum   $\cH(1,\dots,1)$,  and  thus  prove
Corollary~\ref{cor:total:contribution:min:and:principal}.  Note  that
for   $g>3$   the   stratum   $\cH(2g-2)$  contains  three  connected
components.  Contribution  of all $1$-cylinder diagrams to individual
components is described in Proposition~\ref{pr:proportion:hyp} and in
the Conditional Corollary~\ref{cor:even:odd:1:cyl}.

\begin{proof}[Proof of Theorem~\ref{th:contribution:all:1:cyl}]
Applying Frobenius formula in the notation of~(A.8) in~\cite{Zagier}, we
express the quantity~\eqref{eq:N:C:C:C}  as a sum over characters $\chi$
of the symmetric group $\mathfrak{S}_n$:
\begin{multline}
\label{eq:Frobenius:Formula:1}
\cN(\mathfrak{S}_n;C(\sigma),C(\sigma),C(\nu))
=\\=
\cfrac{|C(\sigma)|\cdot|C(\sigma)|\cdot|C(\nu)|}{|\mathfrak{S}_n|}
\,
\sum_\chi \frac{\chi(C(\sigma))\chi(C(\sigma))\chi(C(\nu))}{\chi(1)^{3-2}}\,.
\end{multline}
In  our particular case the cardinality of the conjugacy class of the
long cycle $\sigma$ is $|C(\sigma)|=(n-1)!$ and $|\mathfrak{S}_n|=n!$.

Following  the  notation  of   \S A.2   in~\cite{Zagier},   denote   by
$\mathbf{St}_n=\C^n/\C$  the  standard  irreducible representation of
dimension $n-1$ of the group $\mathfrak{S}_n$ and put
$$
\chi_j(g):=\operatorname{tr}(g,\pi_j)\qquad
\pi_j:=\wedge^j(\mathbf{St}_n)\qquad
(0\le j\le n-1)\;,
$$
where $g\in \mathfrak{S}_n$ is any permutation.
It  is  known  that  the  representations $\pi_j$ are irreducible and
pairwise distinct for $0\le j\le n-1$ (Lemma~A.2.1 in~\cite{Zagier}).
Moreover, by Lemma~A.2.2 in~\cite{Zagier} for any irreducible
representation $\pi$ one has
$$
\chi_\pi(\sigma)=
\begin{cases}
(-1)^j,&\text{if }\pi\simeq\pi_r\text{ for some }j,\ 0\le j\le n-1\\
0&\text{otherwise\,,}
\end{cases}
$$
where  $\sigma=(1,2,\dots,n)$  is the maximal cycle in $\mathfrak{S}_n$.

Finally, $\chi_j(1)=\dim\pi_j=\binom{n-1}{j}$.

Substituting   all   these   values  in  the  Frobenius  formula  we  can
rewrite~\eqref{eq:Frobenius:Formula:1} as
\begin{multline}
\label{eq:Frobenius:Formula:2}
\cN(\mathfrak{S}_n;C(\sigma),C(\sigma),C(\nu))
=
\cfrac{(n-1)!\cdot(n-1)!\cdot|C(\nu)|}{n!}
\,\cdot\\ \cdot
\sum_{j=0}^{n-1}
(-1)^j\cdot(-1)^j\cdot\chi_j(C(\nu))\cdot\frac{j!(n-1-j)!}{(n-1)!}
=\\=
\cfrac{|C(\nu)|}{n}
\,\cdot
\sum_{j=0}^{n-1} j!\,(n-1-j)! \cdot \chi_j(C(\nu))
\end{multline}
Plugging the expression~\eqref{eq:Frobenius:Formula:2}
into~\eqref{eq:N:m1:mr} and applying~\eqref{eq:contribution:numbered}
we complete the proof of Theorem~\ref{th:contribution:all:1:cyl}.
\end{proof}

The latter formula becomes particularly simple in the case of the
minimal stratum  $\cH(2g-2)$  when  $C(\nu)=C(\sigma)$ and in the
case of the principal  stratum  $\cH(1,\dots,1)$  when the cyclic
decomposition of $\nu$ is composed of $2g-2$ cycles of length $2$.
\medskip

\begin{proof}[Proof of
Corollary~\ref{cor:total:contribution:min:and:principal}
for the minimal stratum $\cH(2g-2)$.]

In the case of the minimal stratum we get
\begin{equation}
\label{eq:Frobenius:Formula:long:cycle}
\cN(\mathfrak{S}_n;C(\sigma),C(\sigma),C(\sigma))
=
\cfrac{(n-1)!}{n}
\,\cdot
\sum_{j=0}^{n-1}(-1)^j j!\,(n-1-j)!\,.
\end{equation}
Using the combinatorial identity
$$
\sum_{k=0}^m \frac{(-1)^k}{\binom{x}{k}}=
\frac{x+1}{x+2}\left(1+\frac{(-1)^m}{\binom{x+1}{m+1}}\right)
$$
(see~(2.1) in~\cite{Gould}) we can simplify~\eqref{eq:Frobenius:Formula:long:cycle}
as
\begin{equation}
\label{eq:Frobenius:Formula:long:cycle:answer}
\cN(\mathfrak{S}_n;C(\sigma),C(\sigma),C(\sigma))
=\begin{cases}
2\cdot\cfrac{\big((n-1)!\big)^2}{n+1}&\text{for odd }n\\
0&\text{for even }n
\end{cases}
\end{equation}
Plugging the expression~\eqref{eq:Frobenius:Formula:long:cycle:answer}
into~\eqref{eq:N:m1:mr} and applying~\eqref{eq:contribution:numbered}
we complete the proof of formula~\eqref{eq:contribution:minimal}.
\end{proof}

The Lemma below will be used in the proof of
Corollary~\ref{cor:total:contribution:min:and:principal}.

\begin{Lemma}
The following identity is valid
\begin{equation}
\label{eq:combinatorial:identity}
\sum_{k=0}^m (-1)^k \left(\frac{\binom{m}{k}}{\binom{2m+1}{2k}}-\frac{\binom{m}{k}}{\binom{2m+1}{2k+1}}\right)
=
\begin{cases}
0\,,&\text{ when $m$ is even}\\
2\cdot\frac{m+1}{m+2}\,,&\text{ when $m$ is odd\,.}
\end{cases}
\end{equation}
\end{Lemma}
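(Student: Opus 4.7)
The plan is to convert each reciprocal binomial coefficient into a Beta integral, so that the alternating sum becomes the integral of a binomial expansion which collapses to a power of $(1-2t)$.

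First, I would write, using the Beta integral $B(a,b) = (a-1)!(b-1)!/(a+b-1)!$, the two identities
\begin{equation*}
\frac{\binom{m}{k}}{\binom{2m+1}{2k}} = \binom{m}{k}(2m+2)\int_0^1 t^{2k}(1-t)^{2m+1-2k}\,dt,
\end{equation*}
\begin{equation*}
\frac{\binom{m}{k}}{\binom{2m+1}{2k+1}} = \binom{m}{k}(2m+2)\int_0^1 t^{2k+1}(1-t)^{2m-2k}\,dt.
\end{equation*}
Subtracting these two and pulling out the common factor $t^{2k}(1-t)^{2m-2k}$ turns the difference inside the sum into
\begin{equation*}
\binom{m}{k}(2m+2)\int_0^1 (1-2t)\, t^{2k}(1-t)^{2m-2k}\,dt.
\end{equation*}

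Summing over $k$ with signs $(-1)^k$ and interchanging sum and integral yields
\begin{equation*}
(2m+2)\int_0^1 (1-2t)\sum_{k=0}^{m}(-1)^k\binom{m}{k}t^{2k}(1-t)^{2(m-k)}\,dt.
\end{equation*}
The key collapse is that the binomial theorem identifies the inner sum as $\bigl((1-t)^2-t^2\bigr)^m = (1-2t)^m$, so the whole expression reduces to $2(m+1)\int_0^1 (1-2t)^{m+1}\,dt$.

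The substitution $u=1-2t$ then gives $(m+1)\int_{-1}^{1}u^{m+1}\,du$. This integral vanishes when $m+1$ is odd (i.e.\ $m$ even) and equals $2/(m+2)$ when $m+1$ is even (i.e.\ $m$ odd), producing exactly $2(m+1)/(m+2)$ in the latter case, which is the claim. There is no real obstacle here: the only thing to double-check is the normalization $(2m+2)$ coming out of the Beta integral, and the algebraic identity $(1-t)^2-t^2=1-2t$ that makes the binomial theorem apply cleanly.
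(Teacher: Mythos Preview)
Your proof is correct and takes a genuinely different route from the paper. The paper quotes two closed-form identities from Gould's tables,
\[
S(m)=\sum_{k=0}^m(-1)^k\frac{\binom{m}{k}}{\binom{2m}{2k}},\qquad
T(m)=\sum_{k=0}^m(-1)^k\frac{\binom{m}{k}}{\binom{2m+1}{2k+1}},
\]
rewrites the first term of the target sum as $\frac{2m+2}{2m+1}S(m)-T(m)$ via the relation $\binom{2m+1}{2k}=\frac{2m+1}{2m+1-2k}\binom{2m}{2k}$, and then plugs in the known values of $S(m)$ and $T(m)$. Your argument instead represents each reciprocal binomial as a Beta integral, whereupon the alternating sum collapses via the binomial theorem to $(2m+2)\int_0^1(1-2t)^{m+1}\,dt$, which is evaluated directly.

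Your approach is more self-contained: it does not appeal to any external table of identities, and the parity dichotomy emerges transparently from the symmetry of $u^{m+1}$ on $[-1,1]$. The paper's approach has the minor advantage that it reuses named identities that may be handy elsewhere, but at the cost of an extra algebraic reduction step and a literature dependence. Both are short; yours is cleaner.
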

\begin{proof}
We use the  following combinatorial identities (see~(4.22) and~(4.23):
in~\cite{Gould})
\begin{align*}
S(m)&:=\ \sum_{k=0}^m (-1)^k \frac{\binom{m}{k}}{\binom{2m}{2k}}\ \quad =\
\frac{1+(-1)^m}{2}\cdot\frac{2m+1}{m+1}\\
T(m)&:=\ \sum_{k=0}^m (-1)^k \frac{\binom{m}{k}}{\binom{2m+1}{2k+1}}\ =\
\frac{1-(-1)^m}{2}\cdot\frac{1}{m+2} +\ (-1)^m\,.
\end{align*}
The second term in the sum~\eqref{eq:combinatorial:identity} is exactly $T(m)$,
while the first one
can be expressed in terms of $S(m)$ and $T(m)$ as follows:
\begin{multline*}
\sum_{k=0}^m (-1)^k \frac{\binom{m}{k}}{\binom{2m+1}{2k}}
=
\sum_{k=0}^m (-1)^k \frac{\binom{m}{k}}{\binom{2m}{2k}}\cdot\frac{2m+1-2k}{2m+1}
=\\=
\sum_{k=0}^m (-1)^k \frac{\binom{m}{k}}{\binom{2m}{2k}}\cdot
\left(\frac{2m+2}{2m+1}-\frac{2k+1}{2m+1}\right)
=\\=
\frac{2m+2}{2m+1}\cdot \sum_{k=0}^m (-1)^k \frac{\binom{m}{k}}{\binom{2m}{2k}}
\ -\
\sum_{k=0}^m (-1)^k \frac{\binom{m}{k}}{\binom{2m+1}{2k+1}}
=\\=
\frac{2m+2}{2m+1}\cdot S(m) - T(m)\,.
\end{multline*}
Plugging the values of $S(m)$ and of $T(m)$ into
the above expression we complete
the proof of the combinatorial identity~\eqref{eq:combinatorial:identity}.
\end{proof}

\begin{proof}[Proof of
Corollary~\ref{cor:total:contribution:min:and:principal}
for the principal stratum $\cH(1,\dots,1)$]
In the case of the principal stratum
we have $C(\nu)=C(\tau)$, where
$$
\tau=(1,2)(3,4)\dots (n-1,n)\qquad\text{and}\qquad
n=4g-4\,
$$
(see equation~\eqref{eq:n} for the formula for $n$). One has
$$
\chi_j(\tau)=(-1)^{[(j+1)/2]} \binom{n/2-1}{[j/2]}
$$
(see  the formula below~(A.26) in~\cite{Zagier}). Finally, it is easy
to    see   directly   that   $|C(\tau)|=(n-1)!!$.   Thus,   we   can
rewrite~\eqref{eq:Frobenius:Formula:2} in this particular case as
\begin{multline*}
\cN(\mathfrak{S}_n;C(\sigma),C(\sigma),C(\tau))
=\\=
\cfrac{(n-1)!!}{n}
\,\cdot
\sum_{j=0}^{n-1} j!\,(n-1-j)! \cdot
(-1)^{\left[\frac{j+1}{2}\right]}
\begin{pmatrix}\frac{n}{2}-1\\\left[\frac{j}{2}\right]\end{pmatrix}
=\\=
\cfrac{(n-1)!!}{n}
\,\cdot
(n-1)! \sum_{j=0}^{n-1}
(-1)^{\left[\frac{j+1}{2}\right]}\,\cdot\
\frac{\begin{pmatrix}\frac{n}{2}-1\\\left[\frac{j}{2}\right]\end{pmatrix}}{\binom{n-1}{j}}\,.
\end{multline*}
Denoting $m=\frac{n}{2}-1$, we rewrite the above sum as
$$
\sum_{j=0}^{n-1}
(-1)^{\left[\frac{j+1}{2}\right]}\,\cdot\
\frac{\begin{pmatrix}\frac{n}{2}-1\\\left[\frac{j}{2}\right]\end{pmatrix}}{\binom{n-1}{j}}
=
\sum_{k=0}^m (-1)^k \left(\frac{\binom{m}{k}}{\binom{2m+1}{2k}}-\frac{\binom{m}{k}}{\binom{2m+1}{2k+1}}\right)
$$
Recall that $n=4g-4$, so $m=2g-3$ is odd.
Applying formula~\eqref{eq:combinatorial:identity} we obtain
$$
\cN(\mathfrak{S}_n;C(\sigma),C(\sigma),C(\tau))
=
\cfrac{(n-1)!!}{n}\cdot(n-1)!
\cdot\left(2\cdot\frac{m+1}{m+2}\right)
\,.
$$
Thus,  the weighted number $\cN(1,\dots,1)$ of $1$-cylinder diagrams
(see~\eqref{eq:N:m1:mr})
for    the    principal   stratum
$\cH(1,\dots,1)$ in genus $g$, when $n=4g-4$ equals
\begin{multline*}
\cN(1,\dots,1)=
\frac{1}{n!}\cdot\cN(\mathfrak{S}_n;C(\sigma),C(\sigma),C(\tau))
=\\=
\frac{1}{(4g-4)!}\cdot
\frac{(4g-5)!!}{(4g-4)}\cdot(4g-5)!\left(2\cdot\frac{2g-2}{2g-1}\right)
=\\=
\frac{(4g-5)!!}{(4g-4)(2g-1)}=
\frac{(4g-5)!}{(2g-1)!}\cdot 2^{-(2g-2)}\,.
\end{multline*}
Applying~\eqref{eq:contribution:numbered}
we complete the proof of formula~\eqref{eq:contribution:principal}.
\end{proof}

We complete this section with the proof of
Proposition~\ref{pr:proportion:hyp}.

\begin{proof}[Proof of Proposition~\ref{pr:proportion:hyp}]
The  results  in~\cite{AEZ:genus:0}  provide the  exact values for the
hyperelliptic  connected  components  (and,  more  generally, for all
hyperelliptic loci), namely:

\begin{align}
\label{eq:vol:hyp}
&\Vol\cH^{hyp}_1(2g-2)&=\cfrac{2\pi^{2g}}{(2g+1)!}\cdot
\cfrac{(2g-3)!!}{(2g-2)!!} \sim
\cfrac{1}{\pi^2 g}\left(\frac{\pi e}{2g+1}\right)^{2g+1}\,.
\\
&\Vol\cH^{hyp}_1(g-1,g-1)&=\cfrac{4\pi^{2g}}{(2g+2)!}\cdot
\cfrac{(2g-2)!!}{(2g-1)!!} \sim
\cfrac{1}{\pi^2 g}\left(\frac{\pi e}{2g+2}\right)^{2g+2}\,.
\end{align}

There is a single $1$-cylinder separatrix diagram for any
hyperelliptic connected component $\cH^{hyp}(2g-2)$ or
$\cH^{hyp}(g-1,g-1)$. Proposition~\ref{pr:contribution:Abelian}
provides the contribution of this diagram to the volume. Taking the
ratio of the resulting expressions~\eqref{eq:contribution:numbered}
and~\eqref{eq:vol:hyp} we obtain the expressions claimed in
Proposition~\ref{pr:proportion:hyp}.
\end{proof}

\section{Alternative counting of $1$-cylinder separatrix diagrams}
\label{s:Alternative:counting}

In this section we suggest two alternative methods of
counting $1$-cylinder separatrix diagrams. The first one, elaborated in
section~\ref{ss:recursive:relations}, is based
on recursive relations  for the numbers of such
diagrams. The second method, presented in
section~\ref{ss:Rauzy:classes}, uses Rauzy diagrams and admits simple
computer realization for low-dimensional strata.

\subsection{Approach based on recursive relations}
\label{ss:recursive:relations}

Here we explicitly enumerate $1$-cylinder separatrix diagrams that give
rise  to  Abelian  differentials  (orientable  case)  or to quadratic
differentials  (nonorientable case) with 0, 1 or 2 saddle connections
shared between the two boundary components of the cylinder.
\medskip

\noindent\textbf{Strata of Abelian differentials.}
We start with the case of orientable separatrix diagrams;
they represent strata of Abelian differentials.
Take a cylinder whose boundary
components are two identical copies of an $\nofint$-gon with a marked side.
Choose an orientation of the cylinder and consider the induced orientation
on its boundary components.
Consider a gluing that identifies the sides of one boundary polygon
with the sides of the other reversing their orientation and
respecting the marked sides. We get a closed orientable surface with
a connected graph $\G$ (the image of the cylinder boundary
components) embedded into it. All vertices of $\G$ have even degree,
and we denote by $v_i$ the number of vertices of $\G$ of degree $2i$.
Clearly, $\nofint=\sum_{i\geq 1} iv_i$, and we call
$[1^{v_1}2^{v_2}\dots]$ the \textit{type} of the cylinder gluing.
The associated $1$-cylinder separatrix diagram corresponds
to the stratum $\cH(0^{v_1},1^{v_2},2^{v_3},\dots)$, and the
complex dimension of this stratum is $n+1$.

Let us now fix a partition $\nu=[1^{v_1}2^{v_2}\dots]$ of $\nofint$ and
denote by $N_\nofint(\nu)$ the number of cylinder gluings of type $\nu$
described above. Consider the generating functions
\begin{align}
&F_\nofint(t_1,t_2,\dots)=\sum_{\nu\,\vdash\, \nofint} N_{\nofint}(\nu)\,t_1^{v_1}\,t_2^{v_2}\dots\;,\nonumber\\
&F(s;t_1,t_2,\dots)=\sum_{\nofint\geq 1}s^{\nofint-1}\,F_\nofint(t_1,t_2,\dots)\;.\nonumber
\end{align}

\begin{thm}\label{abel}
Put
\begin{align}
M_1=\sum_{i=2}^\infty \sum_{j=1}^{i-1} (i-1)t_j t_{i-j}\,\frac{\partial}{\partial t_{i-1}} + j(i-j) t_{i+1}\,\frac{\partial^2}{\partial t_j \partial t_{i-j}}\;.
\end{align}
Then the generating function $F=F(s;t_1,t_2,\dots)$ satisfies the linear PDE
\begin{align}\label{pde}
\frac{\partial F}{\partial s}=M_1F
\end{align}
and is uniquely determined by the initial condition $F|_{s=0}=t_1$.
Equivalently, the generating function $F$ is explicitly given by the formula
\begin{align}
F(s;t_1,t_2,\dots)=e^{sM_1}t_1\;.
\end{align}
\end{thm}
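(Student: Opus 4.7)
The plan is to prove the equivalent recursion $n F_{n+1} = M_1 F_n$ for every $n \geq 1$, obtained by comparing the coefficient of $s^{n-1}$ on both sides of the PDE $\partial F/\partial s = M_1 F$. The base case $F_1 = t_1$ is immediate, since the unique size-1 gluing produces a single ribbon-graph vertex of degree two. Once the recursion is established, $F = e^{sM_1} t_1$ follows as the standard exponential solution of a linear first-order PDE with the prescribed initial condition.

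To prove the recursion combinatorially, I interpret $n\, N_{n+1}(\nu)$ as counting pairs $(\mathcal{C}, e^{\star})$, where $\mathcal{C}$ is a cylinder gluing of size $n+1$ and type $\nu$ and $e^{\star}$ is one of the $n$ non-marked top sides of $\mathcal{C}$. To each such pair I associate a size-$n$ gluing $\mathcal{C}'$ obtained by deleting $e^{\star}$ and its bottom gluing partner $e^{\star\star}$ from the two polygons; the two polygon-vertices flanking each deleted segment then merge. The effect on the ribbon graph is governed by the edge $e$ of $\mathcal{C}$ corresponding to the glued pair $(e^{\star},e^{\star\star})$, and I split into two cases. \textbf{Case A:} $e$ joins two distinct ribbon-graph vertices $V_1, V_2$ of degrees $2p$ and $2q$; writing $i = p+q$, these merge in $\mathcal{C}'$ into a single vertex $V$ of degree $2(i-1)$. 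Reading the operation in reverse, one inserts at $V$ a new non-loop edge splitting it, which requires partitioning the cyclic sequence of $2(i-1)$ half-edges of $V$ into two arcs of sizes $2p-1$ and $2q-1$; the resulting cyclic-split count, summed over $j \in \{1,\ldots, i-1\}$ with $j = p$, exactly reproduces the factor $(i-1)$ in Term A of $M_1$. \textbf{Case B:} $e$ is a loop at a single vertex $V$ of degree $2(i+1)$; upon deletion of $e$ together with the polygon-vertex merging, the class $V$ splits into two ribbon-graph vertices $V_a, V_b$ of degrees $2j, 2(i-j)$ in $\mathcal{C}'$, since the identifications previously supplied by the loop are lost. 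In reverse, a new loop joining $V_a$ to $V_b$ is built by selecting one of the $2j$ half-edge positions at $V_a$ and one of the $2(i-j)$ at $V_b$; after the symmetry between the two endpoints of the new loop is accounted for, this yields the coefficient $j(i-j)$ of Term B.

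The main obstacle lies in the careful bookkeeping of vertex identifications under the deletion operation. In Case B the plain deletion of a loop in a generic ribbon graph does not split its vertex; the splitting occurs here only because the concurrent polygon-vertex merging removes precisely the identifications that had kept the class $V$ connected. One must also insist that $V_a \neq V_b$ in Case B (so that the second partial derivative naturally produces $v_j(v_j-1)$ rather than $v_j^2$ in the diagonal case $j = i-j$), and handle the factor-of-two symmetry between ordered pairs $(j, i-j)$ and $(i-j, j)$ in Case A so that the sum $\sum_{j=1}^{i-1}(i-1)$ correctly reproduces the cyclic count without double-counting. A total-count verification, showing that Cases A and B together contribute exactly $n^2$ operations per $\mathcal{C}'$ and hence $n^2(n-1)! = n\cdot n!$ in total, confirms that no case has been missed. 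After these verifications, equating coefficients of each monomial $t^\nu$ on both sides of $n F_{n+1} = M_1 F_n$ yields the recursion, and hence the theorem.
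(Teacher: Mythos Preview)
Your approach is essentially identical to the paper's: both establish the recursion $(n-1)F_n = M_1 F_{n-1}$ by the same edge-deletion bijection, picking a non-marked edge and analyzing the two cases according to whether it is incident to one or two vertices of $\Gamma$. The paper phrases the argument in the dual ribbon graph $\Gamma^*$ (two vertices, $n$ edges, boundary cycles corresponding to vertices of $\Gamma$), where your Cases A and B become simply whether the deleted edge bounds two distinct boundary cycles of $\Gamma^*$ or a single one --- this makes the merging/splitting of $\Gamma$-vertices transparent and sidesteps the delicate ``polygon-vertex merging'' bookkeeping you correctly flag in Case~B.
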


\begin{proof}
First, rewrite~\eqref{pde} as a recursion for the numbers
$N_{\nofint}(\nu)$. Denote by $\e_i$ the sequence with 1 at the $i$-th
place and 0 elsewhere. Then~\eqref{pde} is equivalent to
\begin{align}
(\nofint-1)&N_{\nofint}(\nu)=\nonumber\\
&=\sum_{i=2}^\infty \sum_{j=1}^{i-1} (i-1)(v_{i-1}+1-\delta_{j,1}-\delta_{i-j,1})\,N_{\nofint}(\nu-\e_j-\e_{i-j}+\e_{i-1})+\nonumber\\
&+\sum_{i=2}^\infty \sum_{j=1}^{i-1} j(i-j)(v_j+1)(v_{i-j}+1+\delta_{j,i-j})\,N_{\nofint}(\nu+\e_j+\e_{i-j}-\e_{i-1})\label{rec}\;.
\end{align}
We prove it by establishing a direct bijection between cylinder
gluings counted in the left and right hand sides of (\ref{rec}).
Consider the ribbon graph $\G^*$ dual to $\G$. It has 2 vertices
(each of degree $\nofint$) and $\nofint$ edges connecting these two vertices (one
of these edges is marked). Let us pick a non-marked edge in $\G^*$,
this can be done in $(\nofint-1)$ ways giving the l.h.s. in (\ref{rec}).
Deletion of this edge results in one of the following two
possibilities:
\begin{enumerate}[label=\roman*)] 
\item The edge belongs to two different boundary cycles of $\G^*$
    of lengths $2j$ and $2(i-j)$. The edge deletion gives rise to
    one boundary cycle of length $2(i-1)$ and the graph type
    changes to $\nu-\e_j-\e_{i-j}+\e_{i-1}$.
\item One boundary cycle of length $2(i+1)$ traverses the edge
    twice (once in each direction). After the edge deletion the
    boundary cycle splits into two ones of lengths $2i$ and
    $2(i-j)$ and the graph type changes to
    $\nu+\e_j+\e_{i-j}-\e_{i+1}$.
\end{enumerate}
Counting the number of ways that each case can occur we get the first
and the second sums in~\eqref{rec} respectively.

To show that the generating function $F$ is uniquely determined by
the initial condition $F|_{s=0}=t_1$, we first notice that $F_1=t_1$
(for $\nofint=1$ there is only one $1$-cylinder configuration). The equation
(\ref{pde}) recursively expresses $F_\nofint$ in terms of $F_{\nofint-1}$ as
follows:
\begin{align}
(\nofint-1)\,F_\nofint=M_1F_{\nofint-1}\;.\label{ind}
\end{align}
Explicit formula $F=e^{sM_1}t_1$ is just another way of writing the same thing.
\end{proof}

\begin{rem}
The numbers $N_\nofint(\nu)$ giving the {\em rooted} count of $1$-cylinder
configurations and the numbers
$\cN(0^{v_1},1^{v_2},2^{v_3},\dots)$,
see~\eqref{eq:N:m1:mr}, giving the {\em
weighted} count of $1$-cylinder diagrams
in $\cH(0^{v_1},1^{v_2},2^{v_3},\dots)$ with weights $1/|{\rm Aut}(\G)|$ are related by the
simple formula
\begin{equation}
\label{eq:nonrooted:from:rooted}
\cN(0^{v_1},1^{v_2},2^{v_3},\dots)=
\frac{1}{\nofint}\cdot N_\nofint(\nu)\,.
\end{equation}

\end{rem}

\begin{Example}

Consider the generating functions for small values of $\nofint$:
\begin{align*}
F_1&=t_1\\
F_2&=t_1^2\\
F_3&=t_1^3+t_3\\
F_4&=t_1^4+4 t_1 t_3 + t_2^2\\
F_5&=t_1^5+10t_3 t_1^2 + 5t_1 t_2^2 + 8t_5\\
F_6&=t_1^6+20 t_1^3 t_3 + 15 t_1^2 t_2^2 + 48 t_1 t_5 + 24 t_2 t_4 + 12 t_3^2
\end{align*}
We know that there is a single $1$-cylinder diagram in the stratum
$\cH(2)$ which has symmetry of order $3$, see Figure~\ref{fig:diag}
in section~\ref{ss:separatrix:diagrams}. For this stratum we have
$\nu=[3^1]$ so we can read the weighted number of $1$-cylinder
diagrams from the coefficient in front of $t_3$ in $F_3$
normalizing it as in~\eqref{eq:nonrooted:from:rooted}. This gives
$1/3$ as expected.

Consider now the stratum $\cH(3,1)$.
The number of associated rooted diagrams is given
by  the  coefficient  of  the monomial $24 t_2 t_4$ in the polynomial
$F_6$. Combining~\eqref{eq:nonrooted:from:rooted}
and~\eqref{eq:contribution:numbered} we get the following impact
of all $1$-cylinder square-tiled surfaces to the volume of this stratum:
$$
\frac{24}{6}\cdot \frac{2\cdot 1!\cdot 1!}{5!}\cdot\zeta(7)=\frac{1}{15}\cdot \zeta(7)\,.
$$
By~\cite{Eskin:Masur:Zorich} we have
$$
\Vol\cH_1(3,1)=\frac{16}{42525}\pi^6=\frac{16}{45}\zeta(6)\,.
$$
Thus, the relative impact $\prop_1(\cH(3,1))$
of $1$-cylinder diagrams is equal to
$$
\left(\frac{1}{15}\zeta(7)\right):\left(\frac{16}{45}\zeta(6)\right)=
\frac{3\zeta(7)}{16\zeta(6)}\,.
$$
which matches the value given in Example~\ref{ex:H31}.

\end{Example}
\medskip

\noindent\textbf{Strata of quadratic differentials.}
Now we proceed with with the case of nonorientable separatrix diagrams;
they represent strata of meromorphic quadratic
differentials with at most simple poles. Take a cylinder bounded by
two polygons, one with $l+2m$ sides and the other with $l+2n$ sides
and consider its orientable gluings that identify $m$ pairs of sides
of the first polygon, $n$ pairs of sides of the second polygon, and
$l$ sides of the first one with $l$ sides of the second one.

We warn the reader that we have two polygons
with a priori different number of sides, and that from now on
the symbol $n$ does not denote the total number of sides anymore.
Contrary to the previous section we
do not mark any side on either of the two polygons anymore.

We get a closed orientable surface, and the image of the boundary
polygons is a graph $\G$ (not necessarily connected) embedded into
it. Suppose that $\G$ has the vertex degree set $v_1,v_2,\ldots$,
where $\nu=[1^{v_1}\,2^{v_2}\,\ldots]$ is a partition of $2(l+m+n)$
(this means that $\G$ has $v_1$ vertices of degree 1, $v_2$ vertices
of degree 2, etc.). The associated $1$-cylinder separatrix diagram
corresponds to the stratum $\cQ(-1^{v_1},0^{v_2},1^{v_3},\dots)$, and
the complex dimension of this stratum is $l+m+n$.

Denote by $N_{l,m,n}(v_1,v_2,\ldots)$ the weighted
count of such gluings. It coincides with the number
$\cN_{l,m,n}(-1^{v_1},0^{v_2},1^{v_3},\dots)$
giving the \textit{weighted} count of $1$-cylinder diagrams
of type $(l,m,m)$ in $\cQ(-1^{v_1},0^{v_2},1^{v_3},\dots)$
with weights $1/|{\rm Aut}(\G)|$
up to a correction in the symmetric
case when $m=n$:
\begin{equation}
\label{eq:number:of:l:m:n:diagrams}
\cN_{l,m,n}(-1^{v_1},0^{v_2},1^{v_3},\dots)
=
\begin{cases}
N_{l,m,n}(v_1,v_2,\ldots)&\text{when $m\neq n$}\\
\frac{1}{2}\cdot N_{l,m,n}(v_1,v_2,\ldots)&\text{when $m= n$\,.}
\end{cases}
\end{equation}

Consider the generating series
\begin{align}
F_{l,m,n}=\sum_{\nu\vdash 2(l+m+n)}N_{l,m,n}(v_1,v_2,\ldots)p_1^{v_1}p_2^{v_2}\ldots\;.
\end{align}

To explicitly compute $F_{l,m,n}$ with $l=0,1,2$ we introduce an
auxiliary generating series $G(s,p_1,p_2,\ldots)$. The coefficient
of $G$ at the monomial $s^{2\nofint}p_1^{v_1}p_2^{v_2}\ldots$ is the
number of orientable gluings of a $2\nofint$-gon with fixed vertex degree
set given by the partition $[1^{v_1}\,2^{v_2}\,\ldots]$ of $2\nofint$. In
other words, each gluing produces a closed orientable surface of
genus $g=\frac{1}{2}\left(1+\nofint-\sum_i v_i\right)$ together with a
graph embedded into it with $v_1$ vertices of degree 1, $v_2$
vertices of degree 2, etc. As usual, the gluings are counted with
weights reciprocal to the orders of the automorphism groups.

The generating series $G(s,p_1,p_2,\ldots)$ was extensively studied
in ~\cite{Kazarian:Zograf}. In particular, as it follows from Theorem
3 (ii) in~\cite{Kazarian:Zograf}, the series $G$ is uniquely
determined by the equation
\begin{align}
\frac{1}{s}\frac{\partial G}{\partial s}=M_2G+p_1^2
\end{align}
modulo the initial condition $G|_{s=0}=0$,
where
\begin{align}
M_2=\sum_{i=2}^\infty \sum_{j=1}^{i-1}(i-2)p_j p_{i-j}\,\frac{\partial}{\partial p_{i-2}} + j(i-j) p_{i+2}\,\frac{\partial^2}{\partial p_j \partial p_{i-j}}\;.
\end{align}

It will be convenient to write $G$ as a power series in $s$:
\begin{align}
G(s,p_1,p_2,\ldots)=\sum_{\nofint=1}^\infty s^{2\nofint} G_\nofint(p_1,p_2,\ldots)\;.
\end{align}
Then we have

\begin{thm}\label{quad}
The following formulas hold:
\begin{align}
F_{0,m,n}&=G_m G_n\;,\label{0}\\
F_{1,m,n}&=\sum_{i=1}^\infty \sum_{j=1}^{\infty}ij\, p_{i+j+2}\,\frac{\partial G_m}{\partial p_i}\frac{\partial G_n}{\partial p_{j}}\;,\label{1}\\
F_{2,m,n}&=\frac{1}{2}\sum_{i=1}^\infty\sum_{j=1}^\infty\sum_{k=1}^\infty\sum_{l=1}^\infty ijk\ell\,p_{i+k+2}\,p_{j+\ell+2}
\frac{\partial^2 G_m}{\partial p_i \partial p_j}\frac{\partial^2 G_n}{\partial p_k \partial p_{\ell}}\label{2}\\
&+\sum_{i=1}^\infty\sum_{j=1}^\infty\sum_{k=1}^\infty ijk(k+1)\,p_{i+j+k+4}\left(\frac{\partial^2 G_m}{\partial p_i \partial p_j}\frac{\partial G_n}{\partial p_k}+\frac{\partial G_m}{\partial p_k}\frac{\partial^2 G_n}{\partial p_j \partial p_k}\right)\nonumber\\
&+\sum_{i=1}^\infty\sum_{j=1}^\infty ij \left(\sum_{k=0}^i\sum_{\ell=0}^j p_{k+\ell+2}\,p_{i+j+2-k-\ell}\right)\frac{\partial G_m}{\partial p_i}\frac{\partial G_n}{\partial p_j}\;.\nonumber
\end{align}
\end{thm}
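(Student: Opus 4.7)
The plan is to establish each of the three identities~(\ref{0}),~(\ref{1}),~(\ref{2}) by a direct combinatorial bijection. The unifying idea is to contract the $l$ free sides on each polygon down to points, converting the $(2m+l)$-gon and $(2n+l)$-gon into a $2m$-gon and a $2n$-gon of the type counted by $G_m$ and $G_n$, decorated with marking data that records where the free sides were inserted. The case $l = 0$ is immediate: with no shared sides the two polygon self-gluings are completely independent, the resulting graph $\Gamma$ is the disjoint union $\Gamma_1 \sqcup \Gamma_2$, and so $F_{0,m,n} = G_m \cdot G_n$.

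For the case $l = 1$ I first establish a topological lemma. After self-gluing the $2m$ paired sides of the $(2m+1)$-gon, the unique remaining free side must have its two endpoints identified, because together they form the only boundary circle of the resulting surface with boundary. Contracting this free side therefore marks a single vertex of the $2m$-gon gluing counted by $G_m$, together with a distinguished incident half-edge recording the angular position of the insertion; likewise for the $(2n+1)$-gon. When the two free sides are reglued to form the unique connecting edge, that edge becomes a loop at the vertex obtained by amalgamating the two marked vertices. A marked vertex of degree $i$ on the $m$-side (counted by $\frac{\partial G_m}{\partial p_i}$) with one of its $i$ incident half-edges distinguished, paired analogously with a marked vertex of degree $j$ on the $n$-side, produces a single new vertex of degree $i + j + 2$, whence~(\ref{1}).

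For the case $l = 2$ the same contraction procedure applies, but now the boundary-circle lemma only constrains the two endpoints of each single free side to coincide; the two free sides on a given polygon may either attach to two distinct marked vertices or coincide at one shared marked vertex. Running through the three resulting combinatorial patterns produces the three sums in~(\ref{2}). When both polygons have two distinct marked vertices, the two connecting edges reglue as independent loops at two new vertices of degrees $i + k + 2$ and $j + \ell + 2$, with the factor $\tfrac{1}{2}$ correcting for the unordered pair of connecting edges. When exactly one polygon has its two free sides at a shared vertex of degree $k$, both connecting edges cascade into a single new vertex of degree $i + j + k + 4$; the factor $k(k+1)$ encodes the sequential insertions of the two free sides around the shared vertex, and the two symmetric terms in the second sum correspond to the two choices of which polygon carries the shared vertex. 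When both polygons have a shared vertex, the two connecting edges form parallel arcs that partition the half-edges of the two shared vertices into two new vertices, and the inner double sum $\sum_{k, \ell} p_{k + \ell + 2}\,p_{i + j + 2 - k - \ell}$ enumerates how the $i$ and $j$ half-edges of the respective shared vertices split across them.

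The principal difficulty lies in the case $l = 2$: verifying the boundary-circle constraint for each individual free side, correctly handling the three sub-cases with their distinct degree shifts ($+2$ per loop, $+4$ when two loops merge at a single vertex), and matching the subtle combinatorial weightings (the factor $\tfrac{1}{2}$, the factor $k(k+1)$, and the inner-sum decomposition) to the exact form of each of the three sums in~(\ref{2}). In particular, one must check that no extra symmetry or over-counting is introduced when shared vertices occur, and that the half-edge distribution around a degree-$k$ shared vertex indeed gives $k+1$ distinct places to insert the second free side after the first has been fixed.
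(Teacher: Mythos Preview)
Your approach is essentially the same as the paper's, phrased in the primal rather than the dual picture: the paper works with the dual ribbon graph $\Gamma^*$ (two vertices, $m$ and $n$ loops, $l$ connecting edges) and tracks how its \emph{boundary cycles} change when connecting edges are inserted; you work with the polygon gluings and track \emph{vertex degrees} of $\Gamma$. Under the duality (vertices of $\Gamma$ $\leftrightarrow$ boundary cycles of $\Gamma^*$, degree $\leftrightarrow$ length) your three sub-cases for $l=2$ are exactly the paper's cases (i), (ii)+(iii), (iv), and the combinatorial factors match.

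There is, however, a genuine gap in your justification. Your ``boundary-circle lemma'' is correct for $l=1$ but false for $l=2$: it is \emph{not} true that each free side of the $(2m+2)$-gon has its two endpoints identified after self-gluing the $2m$ paired sides. Take $m=1$, the square $a,b,c,d$ with adjacent sides $a\sim b$ glued and $c,d$ free; then $c$ and $d$ together form a single boundary circle and neither has coinciding endpoints. This error surfaces as an internal inconsistency in your treatment of the third sub-case: if each free side were really a loop, gluing the two loops across would collapse everything to a \emph{single} vertex, yet you (correctly) describe ``two new vertices'' of degrees $k+\ell+2$ and $i+j+2-k-\ell$. The lemma is simply not what governs the bijection. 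The correct bookkeeping, which the paper carries out on the dual side, is that inserting a free side means choosing a polygon corner of the $2m$-gon --- equivalently a position along a boundary cycle of $\Gamma_1^*$ --- and then computing directly how boundary cycles (equivalently, vertex degrees of $\Gamma$) merge or split when the connecting edges are added. Drop the lemma and argue via polygon corners (or, cleaner, pass to $\Gamma^*$ as the paper does); the rest of your plan then goes through.
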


\begin{proof}
Instead of the graph $\G$ (the image of cylinder's boundary) it is
handier to consider its dual graph $\G^*$. The graph $\G^*$ has two
vertices, $m$ loops incident to the first vertex, $n$ loops incident
to the second vertex and $l$ edges connecting the first vertex with
the second one. We also assume that the vertices are labeled.

Formula (\ref{0}) of Theorem \ref{quad} is obvious.

To prove (\ref{1}), let us take two ribbon graphs with one vertex
each, the first one with $m$ loops and the second one with $n$ loops.
Let us count the number of ways to connect the two vertices with a
single edge. For any boundary component of length $i$ of the first
graph and any boundary component of length $j$ of the second graph
there are $ij$ possibilities to connect them with an edge. Instead of
two disjoint boundary components of lengths $i$ and $j$ we get a
single boundary component of length $i+j+2$. This simple observation
is precisely described by Formula (\ref{1}).

The proof of Formula (\ref{2}) is similar to that of (\ref{1}).
Again, we start with two ribbon graphs with one vertex each, the
first one with $m$ loops and the second one with $n$ loops. Now we
count the number of different ways to connect the two vertices with a
double edge. Four possibilities can occur:
\begin{enumerate}[label=\roman*)]
\item Two different boundary components of the first graph of lengths
$i$ and $j$ are connected by two edges with two boundary components
of the second graph of lengths $k$ and $\ell$ respectively. There are
$ijk\ell$ ways to do that. The boundary components of lengths $i$ and
$k$ are replaced by a single boundary component of length $i+k+2$,
and the components of lengths $j$ and $\ell$ are replaced by a single
component of length $j+\ell+2$. This possibility is described by the
first line in the right hand side of (\ref{2}).
\item Two different boundary components of the first graph of
lengths $i$ and $j$ are connected by two edges with
one boundary components of the second graph of lengths $k$. This can
be done in $ijk(k+1)$ ways. The three boundary components of
lengths $i,\;j$ and $k$ are replaced by a single
boundary component of length $i+j+k+4$.
\item A boundary component of the first graph of length $k$ is
connected by two edges with two boundary components of the second
graph of lengths $i$ and $j$. Similar to the previous case, this can
be done in $ijk(k+1)$ ways. The three boundary components of
lengths $i,\;j$ and $k$ are replaced by a single
boundary component of length $i+j+k+4$. The cases (ii) and (iii) can
be united to produce the second line in the right hand side of
(\ref{2}).
\item A boundary component of the first graph of length $i$ is
connected by two edges with a boundary component of the second graph
of length $j$. There are $ij$ ways to connect the two boundary
components with one edge. If the endpoints of the second edge at the
distances $k$ and $\ell$ from the endpoints of the first one, the
components of lengths $i$ and $j$ get replaced by the boundary
components of lengths $k+\ell+2$ and
$i+j+2-k-\ell$. This last possibility is described by the third line
in the right hand side of (\ref{2}).
\end{enumerate}
\end{proof}

\begin{Example}
\label{ex:Qi313}
To find the contribution of $1$-cylinder separatrix diagrams to the
volume of the stratum $\cQ(1^3,-1^3)$ we have to find the weighted
number of ribbon graphs as above with $3$ vertices of valence $1$
(corresponding to $3$ simple poles) and with $3$ vertices of valence
$3$ (corresponding to $3$ simple zeroes). So the \textit{type of the
cylinder gluing} representing the stratum $\cQ(1^3,-1^3)$ is $[1^3,
3^3]$ and we are interested in monomials corresponding to $p_1^3
p_3^3$ in polynomials $F_{l,m,n}$ with $l+m+n=6$. We present some of
them to compare the result with the diagram-by-diagram calculation
presented in the next section.
\begin{align}
\label{eq:F015}
F_{0,1,5}&=4p_1^3 p_4 p_2 p_3 + p_1^5 p_2^2 p_3 + 3p_1^3 p_5 p_2^2 + \frac{1}{2} p_1^6 p_2 p_4 + 5p_1^4 p_6 p_2 + \frac{7}{2} p_1^4 p_5 p_3 + \frac{1}{10} p_1^7 p_5\\
\notag
&+\frac{5}{2} p_1^5 p_7 + \frac{21}{2} p_9 p_1^3 + \frac{21}{4} p_8 p_1^2 p_2 + \frac{7}{2} p_1^2 p_7 p_3 + \frac{13}{4} p_1^2 p_4 p_6 + \frac{33}{20} p_1^2 p_5^2 + \frac{1}{4} p_1^4 p_2^4\\
\notag
&+\frac{1}{4} p_1^6 p_3^2 + \frac{1}{2} p_1^3 p_3^3 + \frac{1}{2} p_1^2 p_4 p_2^3 + \frac{1}{2} p_1^2 p_2^2 p_3^2 + \frac{3}{2} p_1^4 p_4^2\,.\\
\label{eq:F033}
F_{0,3,3}&=\frac{1}{3}p_4 p_1^3 p_2 p_3 + p_4 p_1 p_2 p_5 + \frac{1}{36} p_3^4 + \frac{1}{3} p_1^5 p_2^2 p_3 + \frac{1}{6} p_1^2 p_2^2 p_3^2\\
\notag
&+\frac{1}{6} p_3^2 p_4 p_2 + \frac{1}{3}  p_1 p_5 p_3^2 + \frac{1}{4} p_1^4 p_2^4 + \frac{1}{9} p_1^6 p_3^2 + \frac{1}{9} p_1^3 p_3^3\\
\notag
&+\frac{1}{4} p_2^2 p_4^2 + p_5^2 p_1^2 + p_5 p_1^3 p_2^2 + \frac{1}{2} p_4 p_1^2 p_2^3 + \frac{2}{3} p_5 p_1^4 p_3\,.\\
\label{eq:F213}
F_{2,1,3}&=10p_4p_5p_2p_1 + 16 p_8 p_1^2 p_2 + 4p_7 p_2^2 p_1 + 13p_4 p_6 p_1^2 + 7 p_5^2 p_1^2 + 12 p_9 p_1^3 + 5 p_{10} p_2\\
\notag
&+36 p_{11} p_1 + \frac{1}{2} p_4^2 p_2^2 + 5 p_1^3 p_2 p_3 p_4 + 5 p_1 p_2 p_3 p_6 + p_3^3 p_1^3 + 3 p_1 p_5 p_3^2 + 4 p_1 p_3 p_4^2\\
\notag
&+2 p_3 p_9 + \frac{13}{2} p_4 p_8 + 5 p_5 p_7 + \frac{3}{2} p_6^2 + p_1^4 p_4^2 + p_1^2 p_2^3 p_4 + p_1^2 p_2^2 p_3^2 + 2 p_1^3 p_2^2 p_5\\
\notag
& + p_1^4 p_2 p_6 + 2 p_1^4 p_3 p_5 + 13 p_1^2 p_3 p_7\,.
\end{align}
By~\eqref{eq:F015} the term $p_1^3 p_3^3$ in $F_{0,1,5}$ has
coefficient $\frac{1}{2}$, so the weighted number $\sum_\cD
\frac{1}{\Gamma(\cD)}$ of $1$-cylinder diagrams representing the
stratum $\cQ(1^3,-1^3)$ with $l=0,m=1,n=5$ is equal to $\frac{1}{2}$.
Table~\ref{tab:Qi3:13}  in section~\ref{ss:Rauzy:classes} shows that such diagram
is, actually, unique, and that its symmetry group $\Gamma(\cD)$
indeed has order $2$.

By~\eqref{eq:F033} the term $p_1^3 p_3^3$ in $F_{0,3,3}$ has
coefficient $\frac{1}{9}$, so the weighted number $\sum_\cD
\frac{1}{\Gamma(\cD)}$ of $1$-cylinder diagrams representing the
stratum $\cQ(1^3,-1^3)$ with $l=0,m=3,n=3$ is equal to $\frac{1}{18}$
(recall that when $m=n$ we have to divide the corresponding coefficient
by $2$ to get the weighted number of diagrams; see~\eqref{eq:number:of:l:m:n:diagrams}).
Table~\ref{tab:Qi3:13} in section~\ref{ss:Rauzy:classes} shows that there is a
unique such diagram, and that its symmetry group $\Gamma(\cD)$ has
order $18$.

By~\eqref{eq:F213} the term $p_1^3 p_3^3$ in $F_{2,1,3}$ has
coefficient $1$.
Table~\ref{tab:Qi3:13} in section~\ref{ss:Rauzy:classes} shows that there is a
unique $1$-cylinder diagram with $l=2,m=1,n=3$ in the stratum $\cQ(1^3,-1^3)$,
and that this diagram does not have any symmetries.

\end{Example}

\subsection{Approach based on Rauzy classes}
\label{ss:Rauzy:classes}

In  this  section we consider a
complete  list  of  $1$-cylinder  separatrix  diagrams representing
one particular     stratum     of    meromorphic    quadratic    differentials
$\cQ(d_1,\dots,d_k)$ with at most simple poles and we consider their
contributions to the Masur--Veech volume.

Denote   by   $\mult_{-1},\mult_1,\mult_2,\dots$  the  multiplicities
$\mult_{j}$   of  entries  $j\in\{-1,1,2,\dots\}$  in  the  set
$\{d_1,\dots,d_k\}$,  where  $\sum  d_i  =4g-4$,  and $g\in\Z_+$.
In the notation of section~\ref{ss:recursive:relations} we have
$\mult_{i}=v_{i+2}$.
In  combinatorial  terms,  we  want  to  construct  all
possible oriented (which is stronger than \textit{orientable})
ribbon  graphs  with exactly $\mu_{-1}$ vertices of valence
$1$;  with  exactly  $\mu_1$  vertices  of  valence  $3$,  ...,  with
exactly  $\mu_j$  vertices  of  valence $j+2$, etc. We are interested
only   in  those  ribbon  graphs  which  have  exactly  two  boundary
components,  and  which  satisfy  the  following extra condition: for
each of the two boundary components $\partial\cD$ of the ribbon graph
$\cD$ there exists an edge of $\cD$ such that it has $\partial\cD$ on
both sides of it.

To  give  an  idea  of  an  approximate  calculation  of  the  volume
based  on our method we compute $\Vol\cQ_1(1^3,-1^3)$ (the stratum is
chosen  by  random).  We  present  a  list of all ribbon graphs $\cD$
satisfying the above conditions, which are realizable in
$\cQ(1^3,-1^3)$.   For   each   such   ribbon   graph   we  give  the
order   $|\Gamma|=|\Gamma(\cD)|$  of  its  symmetry  group,  we
present            $l,m,n$           and           we           apply
formula~\eqref{eq:general:contribution}  to  compute its contribution
to  the  volume  of  the  stratum.  Recall  the  convention used
in~\eqref{eq:general:contribution}:   defining   the  symmetry  group
$\Gamma(\cD)$  we  assume  that  none  of  the  vertices,  edges,  or
boundary  components  of the ribbon graph $\cD$ is labeled; however,
we assume  that  the orientation of the ribbons is fixed.

The  stratum  $\cQ(1^3,-1^3)$  corresponds  to  genus  $g=1$.  It  is
connected      and      $d=\dim_\C\cQ(1^3,-1^3)=6$.      We      have
$\mult_{-1}=3$,   $\mult_1=3$,   and   there  are  no  other  entries
$\mult_k$.  This  means that every such ribbon graph has $3$ vertices
of valence one, and $3$ vertices of valence $3$.

\begin{table}[htb]
$$
\begin{array}{|c|c|c|c|}
\hline &&& \\ [-\halfbls]
\text{Ribbon graph }\cD\hspace*{5pt} &|\Gamma|& l,m,n &\text{Contribution to }\Vol\cQ_1(1^3,-1^3)\\
&&& \\ [-\halfbls]
\hline&&&\\
[-\halfbls]
&& l=0 &\\
\includegraphics{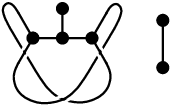}
& 2 & m=5 &
\cfrac{2^{0+2}}{2}\cdot\cfrac{(5+1-2)!}{(5-1)!(1-1)!}\cdot\cfrac{3!\cdot 3!}{(6-2)!}\,\zeta(6)=3\zeta(6)\\
&& n=1 &\\
\hline&&&\\
[-\halfbls]
&& l=0 &\\
\includegraphics{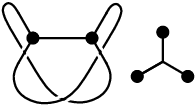}
& 18 & m=3 &
\cfrac{2^{0+2}}{18}\cdot\cfrac{(3+3-2)!}{(3-1)!(3-1)!}\cdot\cfrac{3!\cdot 3!}{(6-2)!}\,\zeta(6)=2\zeta(6)\\
&& n=3 &\\
\hline&&&\\
[-\halfbls]
&& l=2 &\\
\includegraphics{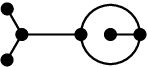}
& 1 & m=3 &
\cfrac{2^{2+2}}{1}\cdot\cfrac{(3+1-2)!}{(3-1)!\cdot(1-1)!}\cdot\cfrac{3!\cdot 3!}{(6-2)!}\,\zeta(6)=24\zeta(6)\\
&& n=1 &\\
\hline&&&\\
[-\halfbls]
&& l=3 &\\
\includegraphics{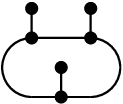}
& 1 & m=2 &
\cfrac{2^{3+2}}{1}\cdot\cfrac{(2+1-2)!}{(2-1)!(1-1)!}\cdot\cfrac{3!\cdot 3!}{(6-2)!}\,\zeta(6)=48\zeta(6)\\
&& n=1 &\\
\hline
\end{array}
$$
\caption{
\label{tab:Qi3:13}
Contribution of $1$-cylinder pillowcase covers to the Masur--Vech volume
$\Vol\cQ_1(1^3,-1^3)$
}
\end{table}

Table~\ref{tab:Qi3:13}  above  shows  that the total contribution of $1$-cylinder
separatrix   diagrams   to   the   volume   $\Vol\cQ_1(1^3,-1^3)$  is
$77\zeta(6)$.   The   statistics  of  frequencies of  $1:2:3$-cylinder
pillowcase  covers  in $\Vol\cQ_1(1^3,-1^3)$ collected experimentally
(see    the    table    for    strata    of    dimension    $6$    in
Appendix~\ref{s:tables:volumes})           gives           proportions
$0.4366:0.4000:0.1634$ which results in
$$
\Vol\cQ_1(1^3,-1^3)\approx \frac{77\zeta(6)}{0.4366}\approx 0.1866 \pi^{6}\,.
$$
as  an approximate value of the volume. The exact value of the volume
found by E.~Goujard in~\cite{Goujard:volumes} gives
$$
\Vol\cQ_1(1^3,-1^3)=\frac{11}{60}\cdot\pi^6\approx 0.1837 \pi^{6}\,,
$$
see  the  line  corresponding to $\cQ(1^3,-1^3)$ in the table for the
strata of dimension $6$ in Appendix~\ref{s:tables:volumes}.
The types of separatrix diagrams and orders
of their symmetry groups presented in the table above
matches the calculation by means of recursive relation
considered in Example~\ref{ex:Qi313}.

Direct  calculations  of  this  kind  were  performed  for  a limited
number  of  strata,  mostly  to  debug the more efficient alternative
approaches    discussed    in   section~\ref{ss:recursive:relations}.
It is, basically, impossible not to forget some ribbon graphs,  to
identify  all  isomorphic ones, and to compute correctly the
cardinality $|\Gamma(\cD)|$ of the symmetry group of each graph
performing ad hoc calculations for strata represented by more then
ten diagrams. Thus, the lists of the diagrams and the cardinalities
of their symmetry groups were, actually, found by computer and
verified in some simple cases by hands.

We used \textit{Rauzy diagrams} to generate these data. Rauzy
diagrams are strongly connected oriented graphs whose vertices are
\textit{generalized permutations}. A generalized permutation is an
ordered pair of ordered sets (traditionally
represented by two lines) composed of entries $0, 1, \dots, n$, where
each entry is presented in the above data exactly twice, and the
unordered union of elements of none of the two lines is a strict
subset of the unordered union of elements of the complementary line.
An usual permutation of the set $\{0, 1, \dots, n\}$ is a particular
case of a generalized one.

There is a bijection between Rauzy diagrams of generalized
permutations and connected component of strata,
see~\cite{Boissy:Lanneau} and~\cite{Veech:Gauss:measures}. Moreover,
any $1$-cylinder diagram in the corresponding component is
represented by a certain subcollection of generalized permutations
whose top first and bottom last symbols are identical; such
(generalized) permutations are called \textit{standard} permutations
in the context of Rauzy diagrams.

Figure~\ref{fig:Jenkins:Strebel} at the beginning of
section~\ref{ss:contribution:of:one:1:cylinder:diagram:computation} illustrates
how the standard generalized permutation
$$
\begin{pmatrix}&0&1&1&\\ &2&3&2&3&0\end{pmatrix}
$$
represents a nonorientable $1$-cylinder separatrix diagram.

The bottom picture in Figure~\ref{fig:merging:zeroes} from
section~\ref{ss:1:cylinder:diagrams:Abelian} illustrates how the
standard permutation
$$
\begin{pmatrix}
0& 1& 2& 3& 4& 5& 6& 7& 8\\
4& 3& 2& 5& 8& 7& 6& 1& 0
\end{pmatrix}
$$
represents the orientable $1$-cylinder diagram on top of
Figure~\ref{fig:merging:zeroes}.

The advantage of this approach is that it is very easy to generate
all permutations in a Rauzy diagram associated to any low-dimensional
stratum. Given a stratum of meromorphic quadratic differentials with
at most simple poles, say, $\cQ(1^3, -1^3)$, we first use the
method~\cite{Zorich:representatives} of one of the authors to
construct some generalized permutation representing the desired
(connected component of) the stratum. Next, one just has to apply two
simple transformation rules to generate the whole Rauzy diagram from
any element. Using the \texttt{surface\_dynamics} package of the
software SageMath it is a five line program:
\begin{verbatim}
sage: from surface_dynamics.all import *
sage: Q = QuadraticStratum({1:3, -1:3})
sage: p = Q.permutation_representative()
sage: R = p.rauzy_diagram(right_induction=True, left_induction=True)
sage: R
Rauzy diagram with 2010 permutations
sage: std_perms = [q for q in R if q[0][0] == q[1][-1]]
sage: len(std_perms)
158
\end{verbatim}

Note that the same $1$-cylinder separatrix diagram might be (and
usually is) represented by several standard generalized permutations.
For example, the following four standard generalized
permutations represent the same $1$-cylinder separatrix diagram:
\begin{equation}
\label{eq:all:standard:permutations}
\begin{pmatrix}
0\,1\,2\,3\,1\,2\,3\\
4\,4\,5\,5\,6\,6\,0
\end{pmatrix}
\quad
\begin{pmatrix}
0\,1\,2\,3\,1\,2\,3\\
4\,5\,5\,6\,6\,4\,0
\end{pmatrix}
\quad
\begin{pmatrix}
0\,1\,1\,2\,2\,3\,3\\
4\,5\,6\,4\,5\,6\,0
\end{pmatrix}
\quad
\begin{pmatrix}
0\,1\,2\,2\,3\,3\,1\\
4\,5\,6\,4\,5\,6\,0
\end{pmatrix}
\end{equation}
from the second line of Table~\ref{tab:Qi3:13}; the one for which we
have $|\Gamma| = 18$ and $l = 0$, $m = 3$, $n = 3$. We explain now
how we group the resulting standard permutations into subcollections
associated to separatrix diagrams, and how we compute the order
$|\Gamma(\cD)|$ of the symmetry group $\Gamma(\cD)$ of a separatrix
diagram $\cD$.

We can put standard permutations into the one-to-one correspondence
with $1$-cylinder separatrix diagrams endowing the latter with the
following extra structure. Choose one of the two possible choices of
a top and a bottom boundary component of the cylinder, and choose a
\textit{marking} on each of the components. (The marking corresponds
to a choice of distinguished saddle connection on each of the
boundary components.)

All standard generalized permutations representing any given
separatrix diagram $\cD$, can be obtained from any standard
generalized permutations representing $\cD$ by the following two
operations.

Remove distinguished symbols (denoted by ``$0$'' in the examples
above); rotate cyclically the top line by any rotation; rotate
cyclically the bottom line by any rotation; insert the distinguished
element on the left of the upper line and on the right of the bottom
one; renumber the entries. We get a collection $D_1$ of standard
generalized permutations.

Apply to every standard generalized permutations in $D_1$ the
following operation. Remove distinguished symbols (denoted by ``$0$''
in the examples above); interchange the top and the bottom line;
insert the distinguished element on the left of the upper line and on
the right of the bottom one and renumber the entries. We get one more
collection $D_2$ of standard generalized permutations.

Take the union of $D_1$ and $D_2$. It is easy to see that we have
constructed all standard generalized permutations representing the
initial separatrix diagram $\cD$. We suggest to the reader to check
that the collection~\eqref{eq:all:standard:permutations} can be
constructed by the two operations as above from any of its elements.

Since the top boundary component is composed from $l+2m$ separatrices
and the bottom component --- from $l+2n$ ones, the cardinality of the
set of nontrivial operations as above is $2 \times (l+2m) \times
(l+2n)$. Thus, the order $|\Gamma(\cD)|$ of the symmetry group
$\Gamma(\cD)$ of the associated separatrix diagram $\cD$ is
$$
\card\Gamma(\cD):=\Big(2 \times (l+2m) \times (l+2n)\Big)/
\card(D_1\cup D_2)\,.
$$
In example~\eqref{eq:all:standard:permutations} we get
$$
\card\Gamma(\cD)=\Big(2 \times (0+2\cdot 3) \times (0+2\cdot 3)\Big)/4=18
$$
as indicated in the second line in Table~\ref{tab:Qi3:13}.

\appendix

\section{An overview of the Masur--Veech volumes of strata}
\label{s:overview:of:MV:volumes}

To   make  the  presentation  self-contained  we  reproduce  in  this
section   the   necessary   background  material  from  the  original
papers~\cite{Masur:82},                  \cite{Veech:Gauss:measures},
\cite{Eskin:Okounkov:Inventiones},        \cite{Zorich:square:tiled}.

\subsection{Masur--Veech volume element  in  the  strata}

A stratum $\cH(m_1,\dots,m_\noz)$ of Abelian differentials is locally
modelled  on  the  cohomology $H^1(S,\{P_1,\dots,P_\noz\};\C)$ of the
underlying   topological   surface   $S$   relative   to   collection
$\{P_1,\dots,P_\noz\}$  of  zeroes. The structure of the vector space
in  the  cohomology  gives  rise  to a one-parameter family of linear
measures  defined  up to a scalar factor. The canonical choice of the
scalar factor is imposed by the condition that the fundamental domain
of   the   lattice  $H^1(S,\{P_1,\dots,P_\noz\};\Z\oplus  i\Z)\subset
H^1(S,\{P_1,\dots,P_\noz\};\C)$   has   unit   volume.   Denote   the
corresponding  volume  element  (density  of  the measure) by $d\nu$.

Consider  an  Abelian  differential  $\omega$  on  a  Riemann surface
$S$;  let  $A_i,B_i$  be  its  periods.  The  area $S(\omega)$ of the
underlying    surface    $S$   measured   in   the   flat   structure
determined   by $\omega$ equals
$$
\text{Flat area of }S(\omega) =
            \cfrac{i}{2}\int_S\omega\wedge\bar{\omega} =
\cfrac{i}{2}\sum_{i=1}^g (A_i\bar{B}_i - \bar{A}_i B_i)\,.
$$
Thus,   the   area  of  the  translation  surface  is  a  homogeneous
real-valued function on the  moduli space of Abelian differentials:
$$
\Area: \mathcal{H}(m_1,\dots,m_\noz) \to \R \qquad
\Area S(\lambda\cdot\omega)=|\lambda|^2 \Area S(\omega), \quad \lambda\in\C{}\,.
$$
Consider  a  ``unit  sphere'', or, better say, a ``unit hyperboloid''
$$
\mathcal{H}_1(m_1,\dots,m_\noz)                         \subset
\mathcal{H}(m_1,\dots,m_\noz)
\quad\text{defined as a level hypersurface}\quad\Area S(\omega)=1\,.
$$
The volume element      $d\nu$  on the stratum induces  the
volume        element        $d\nu_1:=\cfrac{d\nu}{d\Area}$        on
$\mathcal{H}_1(m_1,\dots,m_\noz)$.

\begin{NoNumberTheorem}[H.~Masur~\cite{Masur:82};
W.~Veech~\cite{Veech:Gauss:measures}]
The    volume    of    any    stratum    of   Abelian   differentials
$\mathcal{H}_1(m_1,\dots,m_\noz)$  with  respect  to  the volume element
$d\nu_1$ is finite.
\end{NoNumberTheorem}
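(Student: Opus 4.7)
The plan is to prove finiteness of $\nu_1(\cH_1(m_1,\dots,m_r))$ by decomposing the unit hyperboloid into a compact ``thick'' part and a ``thin'' part whose volume contribution can be bounded. By the coarea formula and the quadratic homogeneity $\Area(\lambda\omega) = |\lambda|^2\Area(\omega)$, one has
$$\nu_1(\cH_1) \;=\; 2d \cdot \nu(\{\omega\in\cH : \Area(\omega)\le 1\}),$$
so it suffices to bound the $\nu$-volume of the unit ``ball'' defined by the area quadratic form. Recall that in period coordinates this quadratic form is the standard sesquilinear form $\tfrac{i}{2}\int \omega\wedge\bar\omega$, which on each chart defines a bounded region of $\C^d$; the difficulty is purely global, coming from the noncompactness of moduli space.

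For the thick part, I would fix $\epsilon>0$ and consider the set $\cH_1^{\ge\epsilon}\subset \cH_1$ of unit-area differentials whose shortest saddle connection has length at least $\epsilon$. A version of Mumford's compactness criterion adapted to strata shows that $\cH_1^{\ge\epsilon}$ is compact: a sequence in it cannot degenerate to a lower-stratum limit (since the systole bound prevents cycles from collapsing), nor can the underlying Riemann surface escape to the Deligne--Mumford boundary (since saddle connections realize nontrivial homology classes). As $d\nu_1$ is locally a smooth density in period coordinates, it is a Radon measure, so $\nu_1(\cH_1^{\ge\epsilon}) < \infty$.

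For the thin part $\cH_1^{<\epsilon}$, the key observation is that it is covered by tubes $U_\gamma^\epsilon = \{\omega : |\int_\gamma \omega| < \epsilon\}$, where $\gamma$ ranges over saddle connections. In period coordinates adapted to $\gamma$, one takes $z_\gamma = \int_\gamma \omega$ as one of the $d$ complex coordinates; intersecting with $\{\Area\le 1\}$, the slice in $z_\gamma$ is a disk of measure $\pi\epsilon^2$, while the transverse measure is bounded in terms of the area constraint. One would then sum over $\gamma$ using the quadratic bound on the number of saddle connections of length at most $L$ (Eskin--Masur), yielding a total thin-part contribution of order $O(\epsilon^2)$; choosing $\epsilon$ small gives the conclusion.

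The main obstacle is precisely this summation: on a single surface there can be many competing short saddle connections, and the coordinate charts associated to different choices of $\gamma$ overlap nontrivially, so a naive cover double-counts. Moreover the Eskin--Masur quadratic growth estimate is, in its sharpest form, proved downstream of volume finiteness, creating a risk of circularity. The original proofs of Masur and Veech avoid this by working in a completely different model: Veech constructs an explicit fundamental domain for the $\SLR$-action via \emph{zippered rectangles}, parameterizing (a finite cover of) the stratum as a suspension of the Rauzy--Veech renormalization over the simplex of normalized interval exchange lengths, and pulls back $d\nu$ to an explicit density on this parameter space. Finiteness then reduces to integrability of a piecewise rational function with only mild (logarithmic-type) singularities on the boundary of the simplex, which is an elementary computation. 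This bypass of the saddle-connection bookkeeping is the key technical insight in both papers.
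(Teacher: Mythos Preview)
The paper does not actually prove this theorem; it is stated in the background appendix as a classical result and attributed to Masur and Veech with a citation, nothing more. So there is no ``paper's own proof'' to compare against.

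As for your proposal itself: it is more a survey of approaches than a proof. Your thick--thin sketch is a reasonable heuristic, and you correctly diagnose its main flaw --- the Eskin--Masur quadratic growth bound for saddle connections is proved \emph{using} finiteness of the Masur--Veech measure (via the Siegel--Veech formula), so invoking it here is circular. Your final paragraph, describing Veech's zippered-rectangle parametrization and the reduction to integrability of an explicit density over the length simplex, is an accurate account of how the result is actually established in~\cite{Veech:Gauss:measures}; Masur's argument in~\cite{Masur:82} is different in detail but similar in spirit. If you intend this as a proof, you should drop the thick--thin discussion entirely (or present it only as motivation) and commit to the zippered-rectangle route, filling in the actual integrability computation rather than just asserting it is ``elementary.''
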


The  situation  with  the moduli spaces of quadratic differentials is
similar.  Consider  a  Riemann surface $S$ endowed with a meromorphic
quadratic   differential   $q$   with   at  most  simple  poles;  let
$\cQ(d_1,\dots,d_k)$  be  the  ambient  stratum for $(S,q)$. Any such
pair  $(S,q)$ defines a canonical orienting double cover $p:\hat S\to
S$ such that $p^\ast q=\hat\omega^2$ is already a global square of an
Abelian  differential $\hat\omega$ on the double cover $\hat S$. This
double  cover  $\hat  S$  is  endowed  with  the canonical involution
$\iota$ interchanging the two preimages of every regular point of the
cover.  The  stratum $\cQ(d_1,\dots,d_k)$ is modelled on the subspace
of  the  relative  cohomology of the orienting double cover $\hat S$,
antiinvariant   with   respect   to   the  involution  $\iota$.  This
antiinvariant   subspace   is   denoted   by   $H^1_-(\hat   S,\{\hat
P_1,\dots,\hat    P_\noz\};\C)$,    where    $\{\hat   P_1,\dots,\hat
P_\noz\}$   are   zeroes   of   the   induced   Abelian  differential
$\hat\omega$.

Recall our Convention~\ref{conv:lattice}:
\begin{NNConvention}
We     define     a    lattice    in    $H^1_-(\hat
S,\{\hat{P}_1,\ldots,\hat{P}_\noz\};\C{})$  as  the  subset  of those
linear  forms  which  take  values  in  $\Z\oplus i\Z$ on $H^-_1(\hat
S,\{\hat   P_1,\dots,\hat   P_\noz\};\Z)$.   We   define  the  volume
element   $d\nu$   on   $\cQ(d_1,\dots,d_k)$  as  the  linear  volume
element            in            the           vector           space
$H^1_-(S,\{\hat{P}_1,\ldots,\hat{P}_\noz\};\C{})$  normalized in such a
way  that  the  fundamental  domain  of  the  above  lattice has unit
volume.
\end{NNConvention}

We  warn  the  reader  that  for  $\noz>1$  this  lattice is a proper
sublattice of index $4^{2g+\noz-1}$ of the lattice
$$
H^1_-(\hat S,\{\hat P_1, \dots, \hat P_\noz\};\C)\ \cap\
H^1(\hat S,\{\hat P_1, \dots, \hat P_\noz\};\Z\oplus i\Z)\,.
$$

The  choice  of one or another lattice is a matter of convention. Our
choice   is  coherent  with~\cite{AEZ:Dedicata},  \cite{AEZ:genus:0},
and~\cite{Goujard:volumes}.

Another    convention   concerns   the normalization  of  the  area
of  the  flat  surface  $S$. Similarly to the  case  of Abelian
differentials we choose a real hypersurface $\cQ_1(d_1,\dots,d_k)$ of
flat surfaces of fixed area in the  stratum  $\cQ(d_1,\dots,d_k)$.

\begin{Convention}
\label{con:area:1:2}
We abuse notation by denoting by
$\cQ_1(d_1,\dots,d_k)$  the  space of flat surfaces of area $1/2$ (so
that the canonical double cover has area $1$).
\end{Convention}

We address the reader to \S~4.1 in~\cite{AEZ:genus:0}
for  the  arguments in favour of these conventions.

The volume element $d\nu$ in the embodying space $\cQ(d_1,\dots,d_k)$
induces  naturally  a  volume  element  $d\nu_1$  on the hypersurface
$\cQ_1(d_1,\dots,d_k)$  in  the  following  way.  In complete analogy
with   the   case  of  Abelian  differentials,  there  is  a  natural
$\C^\ast$-action        on        $\cQ(d_1,\dots,d_k)$:        having
$\lambda\in\C^\ast$  we associate to the flat surface
$S=(C,q)$ (where $C$ is a complex curve and $q$ is a meromorphic quadratic
differential) the
flat  surface
\begin{equation}
\label{eq:Cstar:action}
\lambda\cdot S:=(C,\lambda^2\cdot q)\,.
\end{equation}
In  particular, we can represent any $S\in\cQ(d_1,\dots,d_k)$ as $S =
R\cdot S_{(1)}$,  where  $R\in\R_+$,  and  where $S_{(1)}$ belongs to the
``hyperboloid'': $S_{(1)}\in\cQ_1(d_1,\dots,d_k)$. Geometrically this
means that the metric on $S$ is obtained from the metric on $S_{(1)}$
by  rescaling  with  linear  coefficient  $R$. In particular, vectors
associated  to  saddle connections on $S_{(1)}$ are multiplied by $R$
to  give  vectors  associated  to corresponding saddle connections on
$S$.  It  means  also that $\Area(S) = R^2\cdot\Area(S_{(1)})=R^2/2$,
since  $\Area(S_{(1)})  =  1/2$.  We  define the \textit{Masur--Veech
volume      element}      $d\nu_1$     on     the     ``hyperboloid''
$\cQ_1(d_1,\dots,d_k)$   by  disintegration  of  the  volume  element
$d\nu$ on $\cQ(d_1,\dots,d_k)$:
\begin{equation}
\label{eq:disintegration}
d\nu = R^{2d-1} \, dR\, d\nu_1\, ,
\end{equation}
where
$$
d=\dim_{\C{}}\cQ(d_1,\dots,d_k)=2g+k-2\,.
$$
Using   this   volume  element  we  define  the  \textit{Masur--Veech
volume} of the stratum $\cQ_1(d_1,\dots,d_k)$:
\begin{equation}
\label{eq:int:cF:nu1}
\Vol\cQ_1(d_1,\dots,d_k):= \int_{\cQ_1(d_1,\dots,d_k)}d\nu_1\,.
\end{equation}

For       a       subset       $E\subset\cQ_1(d_1,\dots,d_k)$      we
denote by $C(E)\subset\cQ_1(d_1,\dots,d_k)$ the cone based on $E$:
\begin{equation}
\label{eq:cone}
 C(E):=\{S=R\cdot S_{(1)}\,|\, S_{(1)}\in E,\ 0<R\le 1\}\,.
\end{equation}
Our  definition  of  the  volume element on $\cQ_1(d_1,\dots,d_k)$ is
consistent with the following normalization:
\begin{equation}
\label{eq:normalization}
\Vol(\cQ_1(d_1,\dots,d_k)) =
\dim_{\R{}} \cQ(d_1,\dots,d_k)\cdot\nu(C(\cQ_1(d_1,\dots,d_k))\,,
\end{equation}
where  $\nu(C(\cQ_1(d_1,\dots,d_k))$  is  the  total  volume  of  the
``cone''  $C(\cQ_1(d_1,\dots,d_k))\subset\cQ(d_1,\dots,d_k)$ measured
by means of the volume element $d\nu$ on $\cQ(d_1,\dots,d_k)$ defined
above.

\subsection{Counting volume by counting integer points}

One   of   the  ways  to  evaluate  the  ``hyperarea''  of  a  smooth
hypersurface  in  the  Euclidean  space $\mathbb{E}^d$ is to make a
homothety  with  a  huge  coefficient $R$ and count the number of
integer   points   inside the spatial body bounded by the  rescaled
hypersurface.  This  number asymptotically  behaves as the volume
$\operatorname{Vol}(R) = R^d\cdot \operatorname{Vol}(1)$ of  the
body bounded by the rescaled hypersurface. The desired surface area
equals
$$
\frac{d \operatorname{Vol}(R)}{dR}\Big|_{R=1}\Big. =
d\cdot \operatorname{Vol}(1)\,.
$$
In  other  words,  to  compute  the area of the surface bounding some
spatial  body  it  is  sufficient  to  know  the  coefficient  in the
leading  term  of  the  asymptotics  of  the number of integer points
which  get inside the stretched body.

The same approach can be applied to calculation of the volumes of the
strata  of  Abelian  and  of  quadratic differentials. Now we have to
count       the       \textit{integer       points}      $\omega_0\in
\mathcal{H}(m_1,\dots,m_\noz)$, (respectively
$q_0\in\cQ(d_1,\dots,d_k)$)  such  that $\Area S(\omega_0)$
(respectively $\Area  S(q_0)$) is bounded by some huge number $N$
(respectively $N/2$),  which  plays  the  role  of the radius $R$.
The     only     difference     with    the    previous    case    is
that     $Area(R\cdot S)$     is a
homogeneous   function  of  degree  $2$  in  $R$,  so  when evaluating  the
hypersurface  area  by  derivation  of  the volume one has to multiply
the result by the extra factor $2$.

Let  us  describe the geometry of translation surfaces represented by
{\it    integer    points}.    Having    an    Abelian   differential
$[\omega_0]\in  H^1(M^2_g,\{P_1,\dots,P_\noz\};\Z\oplus  i\,\Z)$  we can
define  a  map  $f_{\omega_0} : M^2_g \to \T^2=\C{}/(\Z\oplus i\,\Z)$
by
$$
f_{\omega_0} : P\mapsto \Big(\int_{P_1}^P \omega_0\Big)\
\text{mod } \Z\oplus i\,\Z\,.
$$
It   is  easy  to  see  that  $f_{\omega_0}$  is  a  ramified  cover;
moreover,  the  ramification  points  are  exactly  the  zeroes  $P_1,
\dots,  P_\noz$  of $\omega_0$. Consider the flat torus $\T^2$ as a unit
square    with    the    identified   opposite   sides.   The   cover
$f_{\omega_0}:S\to  \T^2$  endows  the  Riemann  surface  $S$  with a
tiling  by  unit  squares.  The  tiling  represents a standard square
lattice  except  for  the  vertices  $P_1,  \dots, P_\noz$ where we have
respectively  $4(m_1+1), \dots, 4(m_\noz+1)$  squares adjacent to a
vertex.  Note  that  all  the  unit  squares  are  provided  with the
following  additional  structure:  we know exactly which edge is top,
bottom,  right,  and  left;  adjacency  of  the squares respects this
structure  in  a  natural  way.  We  shall  call  a flat surface with
such tiling a \textit{square-tiled surface}.

In  the  case  of  quadratic  differentials  the  integer  points are
represented  by \textit{pillowcase covers} over $\CP$ branched at the
four      corners      of     the     square     pillow     as     in
Figure~\ref{fig:square:pillow}.

\begin{figure}[htb]
   %
   %
\includegraphics{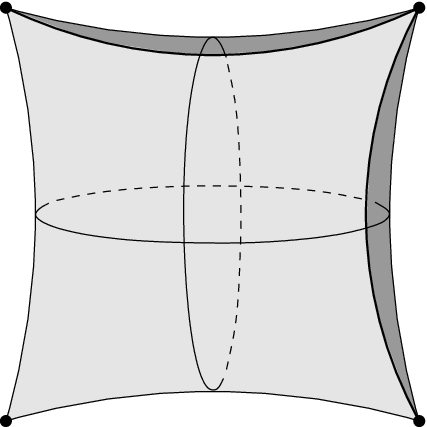}
\vspace{55pt}
\caption{
\label{fig:square:pillow}
Flat $\CP$ glued from two squares with the side $\frac{1}{2}$.}
\end{figure}

To                          summarize,                          under
conventions~\eqref{eq:disintegration}--\eqref{eq:normalization}    we
get  the  following  formulas (see~\cite{Eskin:Okounkov:Inventiones},
\cite{Zorich:square:tiled} for details):

\begin{NNLemma}
Let  $\frac{c}{2d}$  be  the  coefficient in the asymptotics
$\frac{c}{2d}\cdot N^d$ of the number     of     square-tiled
surfaces     in     the stratum $\cH(m_1,\dots,m_\noz)$  tiled  with
at  most  $N$ unit squares when $N\to+\infty$. Then
\begin{equation}
\label{eq:volume:from:square:tiled}
\Vol\cH_1(m_1,\dots,m_\noz)=c\,.
\end{equation}
Similarly, let $\frac{c}{2d}$ be the coefficient in the asymptotics
$\frac{c}{2d}\cdot N^d$ of    the    number    of
pillowcase   covers   in   the   stratum $\cQ(d_1,\dots,d_k)$ of
degree at most $N$. Then
\begin{equation}
\label{eq:volume:from:pillowcase}
\Vol\cQ_1(d_1,\dots,d_k)=c\,.
\end{equation}
\end{NNLemma}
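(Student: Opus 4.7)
The plan is a direct lattice-point-counting argument, together with the disintegration of the Masur--Veech volume element $d\nu$ on $\cH$ into $dR$ and $d\nu_1$ on the ``unit hyperboloid'' $\cH_1$. First I would recall that, by construction of the map $f_{\omega_0}\colon S \to \T^2$ described just above the statement, the integer points $[\omega_0] \in H^1(S,\{P_1,\dots,P_\noz\};\Z \oplus i\Z)$ are in bijection with square-tiled surfaces in $\cH(m_1,\dots,m_\noz)$ tiled by unit squares. Moreover, the Masur--Veech measure $d\nu$ on the stratum is by definition the linear Lebesgue measure on period coordinates normalized so that this integer lattice has covolume one.

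Next, the set of square-tiled surfaces with at most $N$ unit squares is exactly the intersection of the integer lattice with the cone $C_{\sqrt{N}} \cH_1 = \{S \in \cH(m_1,\dots,m_\noz) : \Area(S) \le N\}$, because of the homogeneity $\Area(R\cdot S) = R^2 \Area(S)$. By the standard equidistribution of an integer lattice against Lebesgue measure on a bounded Jordan-measurable domain (applied to the dilation of $C_1\cH_1$ by the factor $\sqrt N \to \infty$), the cardinality of this set satisfies
\[
\card\{S \in \cH_\Z : \Area(S) \le N\} \,\sim\, \nu\!\left(C_{\sqrt{N}}\cH_1\right) \,=\, N^{d}\cdot \nu(C_1\cH_1) \qquad (N \to +\infty),
\]
where $d=\dim_\C\cH(m_1,\dots,m_\noz)$. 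Invoking the normalization~\eqref{eq:nu1}, $\nu_1(\cH_1) = 2d\cdot\nu(C_1\cH_1) = \Vol\cH_1(m_1,\dots,m_\noz)$, the asymptotic rewrites as $\tfrac{N^d}{2d}\cdot\Vol\cH_1(m_1,\dots,m_\noz)$. Matching this with the assumed asymptotic $\tfrac{c}{2d}\,N^d$ yields~\eqref{eq:volume:from:square:tiled}.

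For the pillowcase case the argument is entirely parallel, with two bookkeeping adjustments. First, ``integer points'' in $\cQ(d_1,\dots,d_k)$ means the lattice fixed by Convention~\ref{conv:lattice} inside $H^1_-(\hat S,\{\hat P_1,\dots,\hat P_\noz\};\C)$; the same $f_{\hat\omega_0}$-type construction identifies its points with pillowcase covers, and $d\nu$ is defined as the Lebesgue measure with this lattice of covolume one. Second, under Convention~\ref{con:area:1:2} the hyperboloid $\cQ_1$ consists of surfaces of flat area $1/2$, so a pillowcase cover of degree at most $N$ represents a surface of $\Area\le N/2$; the homogeneity~\eqref{eq:Cstar:action} and the disintegration~\eqref{eq:disintegration}--\eqref{eq:normalization} imply that this set of surfaces is the $\sqrt N$-dilate of $C_1 \cQ_1$, of $\nu$-volume $N^d\cdot\nu(C_1\cQ_1) = \tfrac{N^d}{2d}\Vol\cQ_1(d_1,\dots,d_k)$, and the conclusion~\eqref{eq:volume:from:pillowcase} follows as before.

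The only point at which I expect to have to be genuinely careful is the verification that all the factors of $2$ cancel correctly in the quadratic case: the area-$1/2$ convention on $\cQ_1$, the fact that the double cover $\hat S$ carries the Abelian differential $\hat\omega$ of degree $2N$ when $q$ is a quadratic differential of degree $N$, and the index $4^{2g+\noz-1}$ between the two conceivable integer lattices discussed in Section~\ref{sect:dep} all introduce factors of $2$ in the count. These all have to be tracked consistently with the chosen normalization of $d\nu_1$; but once this bookkeeping is fixed by Conventions~\ref{conv:lattice} and~\ref{con:area:1:2}, no further content beyond lattice-point equidistribution is required.
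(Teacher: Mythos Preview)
Your proposal is correct and follows precisely the approach the paper sketches. Note that the paper does not actually give a proof of this lemma: it is presented as a summary (``To summarize, under conventions~\eqref{eq:disintegration}--\eqref{eq:normalization} we get the following formulas'') with references to~\cite{Eskin:Okounkov:Inventiones} and~\cite{Zorich:square:tiled} for the details. The preceding subsection ``Counting volume by counting integer points'' outlines exactly the lattice-point-in-a-dilated-body argument you give, including the observation that one must multiply by an extra factor of $2$ because $\Area$ is homogeneous of degree $2$ in the scaling parameter $R$.

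One small caveat worth noting: you invoke ``standard equidistribution of an integer lattice against Lebesgue measure on a bounded Jordan-measurable domain'', but the cone $C_1\cH_1$ sits inside a non-compact orbifold rather than a bounded subset of a single linear chart. The honest version of the argument uses the finiteness of the Masur--Veech volume (the Masur/Veech theorem recalled just above the lemma) to control the contribution from the thin part; this is exactly what the cited references carry out, and your write-up is at the same level of detail as the paper's own sketch.
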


Note  that  some  strata  are  not  connected.  In  this  case  it is
important  to  compute  the  volume  of  every  individual  connected
component.   The  connected  components  of  the  strata  of  Abelian
differentials   are   classified   in~\cite{Kontsevich:Zorich};   the
connected  components  of  the  strata of quadratic differentials are
classified  in~\cite{Lanneau}; see also a recent rectification in the
arXiv version of the latter paper.

\subsection{Jenkins--Strebel differentials. Separatrix diagrams}
\label{ss:separatrix:diagrams}

Assume   that   all   leaves   of  the  horizontal  foliation  of  an
Abelian  or  quadratic differential are either closed or connect
critical  points  (a  leaf  joining  two  critical points is called a
\textit{saddle   connection}  or  a  \textit{separatrix}).  Later  we
will  be  saying  simply  that  the  horizontal  foliation  has  only
closed   leaves.   The  square  of  an  Abelian  differential,  or  a
quadratic   differential   having   this   property   is   called   a
\textit{Jenkins--Strebel}           quadratic           differential,
see~\cite{Strebel}.  For example, square-tiled surfaces or pillowcase
covers   provide  particular  cases  of  Jenkins--Strebel  differentials.

Following~\cite{Kontsevich:Zorich}   we   will  associate  with  each
Abelian or quadratic differential whose horizontal foliation has only
closed   leaves   a   combinatorial  data  called  \textit{separatrix
diagram}.

We  start  with  an  informal  explanation. Consider the union of all
saddle   connections  for  the  horizontal  foliation,  and  add  all
critical points. We obtain a finite graph $\Gamma$. In the case of an
Abelian  differential  it  is  oriented, where the orientation on the
edges   comes  from  the  canonical  orientation  of  the  horizontal
foliation.   In   both   cases  of  an  Abelian  or  quadratic
differential,  the  graph  $\Gamma$  is drawn on an oriented surface,
therefore  it  carries  a \textit{ribbon structure}, i.e. on the star
of   each   vertex   $v$   a   cyclic  order  is  given,  namely  the
counterclockwise  order  in  which  half-edges are attached to $v$. In the
case  of  an Abelian differential, the direction of edges attached to
$v$  alternates  (between  directions  toward $v$ and from $v$) as we
follow  the cyclic order.

It  is  well  known  that  any  finite  ribbon graph $\Gamma$ defines
canonically  (up  to  an  isotopy)  an  oriented  surface $S(\Gamma)$
with  boundary.  To  obtain  this  surface  we  replace  each edge of
$\Gamma$  by  a  thin  oriented  strip  (rectangle)  and  glue  these
strips   together   using   the   cyclic  order  in  each  vertex  of
$\Gamma$.  In  our  case  surface  $S(\Gamma)$  can  be realized as a
tubular  $\varepsilon$-neighborhood  (in the sense of the transversal
measure)  of  the  union  of  all saddle connections for sufficiently
small $\varepsilon>0$.

In  the  case  of  an  Abelian differential, the orientation of edges
of  $\Gamma$  gives  rise  to  the  orientation  of  the  boundary of
$S(\Gamma)$.  Notice  that  this orientation is {\it not} the same as
the  canonical  orientation  of  the boundary of an oriented surface.
Thus,  connected  components  of  the  boundary  of  $S(\Gamma)$  are
decomposed  into  two  classes:  positively  and  negatively oriented
(positively   when   two  orientations  of  the  boundary  components
coincide  and  negatively,  when  they  are opposite). We shall also
refer  to  them as the \textit{top} and \textit{bottom} components of
the  corresponding cylinder, with respect to the positive orientation
of   the   vertical   foliation.   The   complement  to  the  tubular
$\varepsilon$-neighborhood  of $\Gamma$ is a finite disjoint union of
open    flat    cylinders   foliated   by   circles.   It   gives   a
decomposition      of     the     set     of     boundary     circles
$\pi_0(\partial   S(\Gamma))$   into   pairs   of  components  having
opposite orientation.

Now   we   are   ready   to   give  a  formal  definition  (see~\S  4
in~\cite{Kontsevich:Zorich} for more details on separatrix diagrams):

\begin{Definition}
A  \textit{separatrix  diagram}  is  a  finite  oriented ribbon graph
$\Gamma$,  and  a  decomposition of the set of boundary components of
$S(\Gamma)$  into  pairs.  An  \textit{orientable} separatrix diagram
satisfies the following additional properties:
\begin{enumerate}
\item  the  orientation  of the half-edges  at  any vertex
alternates with
respect to the cyclic order of edges at this vertex;
\item   there   is   one   positively  oriented  and  one  negatively
oriented boundary component in each pair.
\end{enumerate}
\end{Definition}

Any   separatrix   diagram   represents  a  measured  foliation  with
only  closed  leaves  on a compact oriented surface without boundary.
We  say  that  a  diagram  is  \textit{realizable} if, moreover, this
measured  foliation  can  be  chosen  as  the horizontal foliation of
some    Abelian    or    quadratic    differential    (depending   on
orientability of the foliation).

Assign  to  each  saddle  connection a real variable standing for its
``length''.  Now  any  boundary  component  is also naturally endowed
with  a  ``length''.  If  we  want  to  glue  flat  cylinders  to the
boundary  components,  the  lengths  of  the components in every pair
should  match  each  other.  Thus,  for  every two boundary components
paired  together  we  get  a linear
relation  on the lengths of saddle connections. Clearly, a diagram is
realizable  if  and  only  if  the  corresponding  system  of  linear
equations    on    lengths    of    saddle   connections   admits   a
strictly positive solution.

As  an example, consider all possible separatrix diagrams which might
appear      in      the      stratum      $\cH(2)$      (see~\S     5
in~\cite{Zorich:square:tiled}  for  more details). The single conical
singularity  of  a flat surface in $\cH(2)$ has cone angle $6\pi$, so
every  separatrix  diagram has a single vertex with six prongs. Since
it  corresponds to the stratum of Abelian differentials, it should be
oriented.  All  such diagrams are presented in Figure~\ref{fig:diag}.
We  see,  that the left diagram $\cD_1$ defines a translation surface
with  a  single  pair  of  boundary  components  (i.e.  with a single
cylinder  filled with closed horizontal leaves); it is realizable for
all  positive values $\ell_1,\ell_2,\ell_3$ of length parameters. The
middle   diagram  defines  a  surface  with  two  pairs  of  boundary
components  (i.e.  with  two  cylinders filled with closed horizontal
leaves);  it  is  realizable  when $\ell_1=\ell_3$. The right diagram
would  correspond  to  a  surface  with  a  single  ``top''  boundary
component,  and with three ``bottom'' boundary components. Since each
``top''  boundary component must be attached to a ``bottom'' boundary
component  by  a  cylinder,  this  diagram  is  not  realizable  by a
translation surface.

\begin{figure}[htb]
\includegraphics{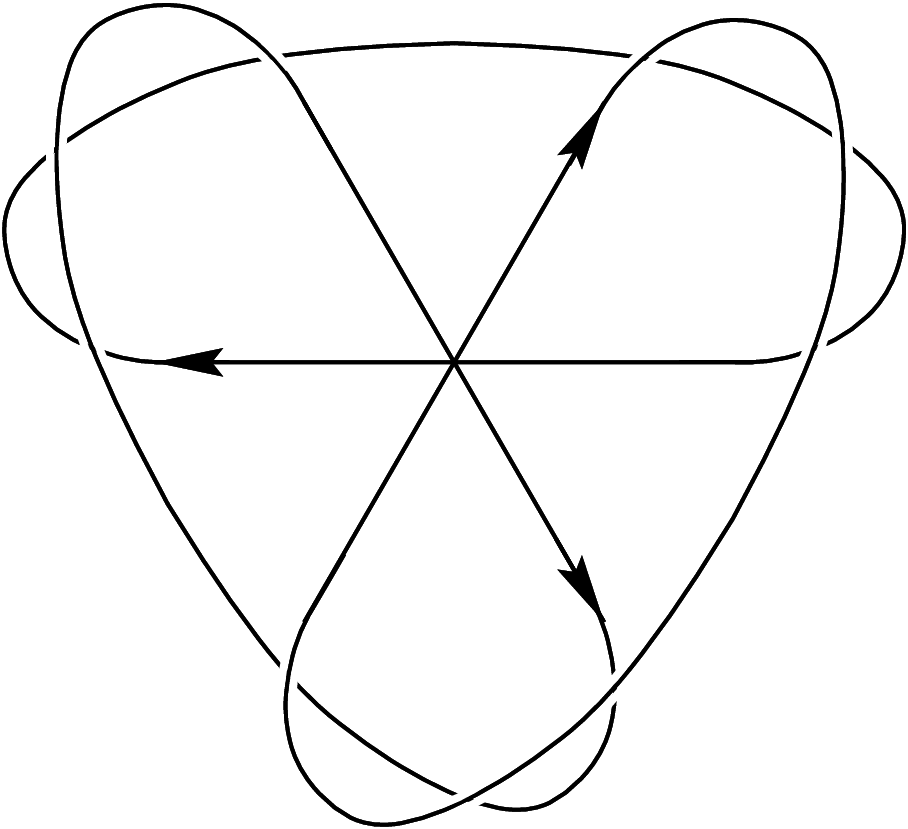}
\includegraphics{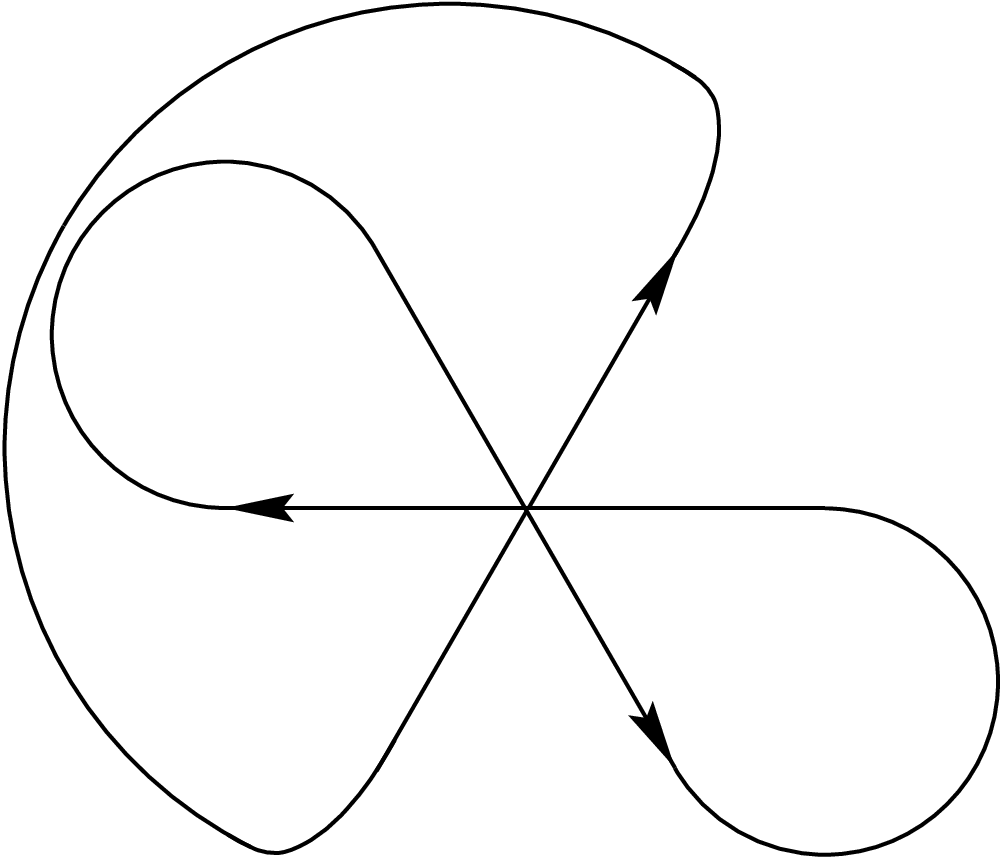}
\includegraphics{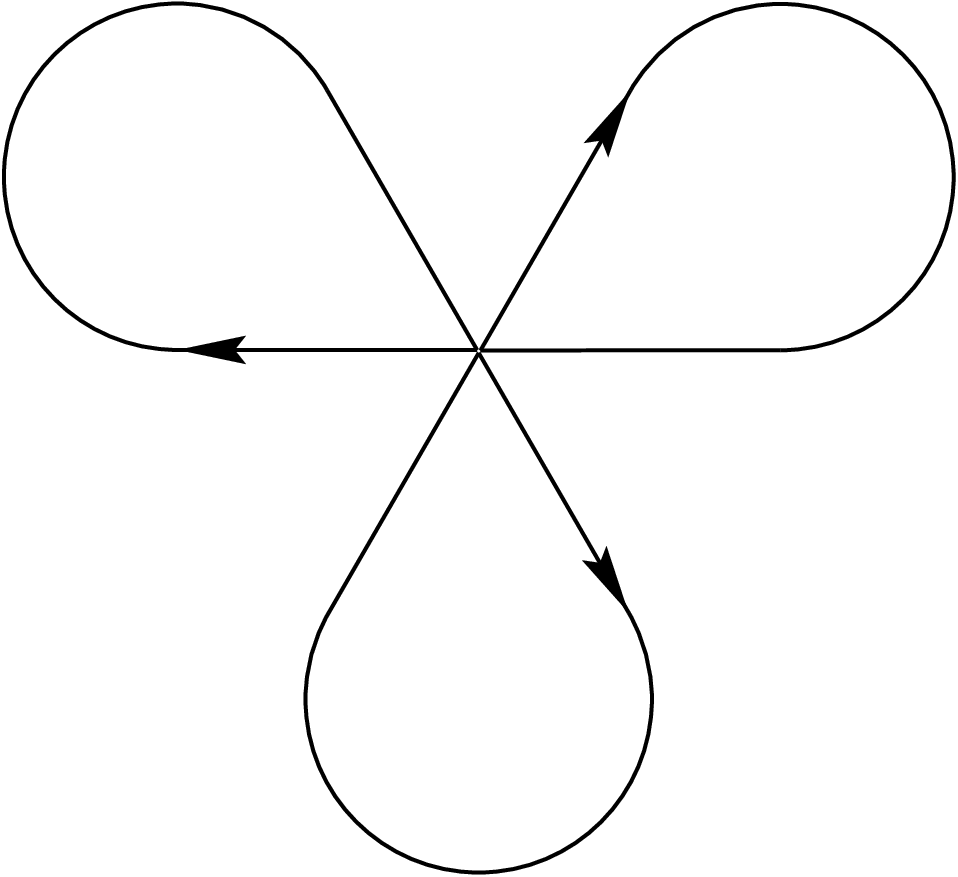}
\vspace{125bp} 
\begin{picture}(0,0)(170,-10)
\put(180,0){\begin{picture}(0,0)(0,0)
\put(-129,20){$\ell_1$}
\put(-95,75){$\ell_2$}
\put(-162,75){$\ell_3$}
\put(-12,100){$\ell_1$}
\put(35,85){$\ell_2$}
\put(-12,15){$\ell_3$}
\put(-129,-5){$\cD_1$}
\put(-12,-5){$\cD_2$}
\put(90,-5){$\cD_3$}
\end{picture}}
\end{picture}
\caption{
\label{fig:diag}
The   separatrix   diagrams   represent   from   left   to   right  a
square-tiled   surface   glued   from:   $\cD_1$  ---  one  cylinder;
$\cD_2$   ---   two  cylinders;  $\cD_3$  ---  not  realizable  by  a
square-tiled surface.}
\end{figure}


We  reproduce now the original computation of M.~Kontsevich of the volume
of  the stratum $\cH(2)$; see~\cite{Zorich:square:tiled} for details.
\medskip

\noindent\textbf{Example of calculation of the Masur--Veech volume.}
Any  square-tiled  surface  in $\cH(2)$ corresponds to one of the two
separatrix  diagrams $\cD_1,\cD_2$ in Figure~\ref{fig:diag}. Consider
those    square-tiled   surfaces   from   $\mathcal{H}(2)$   which
correspond  to  the  leftmost  diagram,  that  is to $\cD_1$. In this
case  our surface is glued from a single cylinder. The waist curve of
the  cylinder  has  length  $w=\ell_1+\ell_2+\ell_3$,  where $\ell_1,
\ell_2,  \ell_3$  are  the  integer  lengths of the horizontal saddle
connections  (also called separatrix loops). Denote the height of the
cylinder  by  $h_1$.  Note  that  there is one more integer parameter
determining  our  square-tiled  surface:  the  integer  twist $\phi$
which  we  apply  to glue together the  two boundary components of the cylinder. It
has   an   integer   value   in   the   interval  $[1,w]$.  Thus  the
number  of  square-tiled  surfaces  of  this  type with area bounded by
$N$  is  asymptotically  equivalent to the sum
$$
\cfrac{1}{3}\sum_{\substack{\ell_1,\ell_2,\ell_3,h\in\N\\
                   (\ell_1+\ell_2+\ell_3)h\le N}}
(\ell_1+\ell_2+\ell_3)\,.
$$
The   coefficient   $1/3$   compensates   the  arbitrariness  of  the
choice   of  numbering  of  $\ell_1,\ell_2,\ell_3$  preserving  the
cyclic  ordering.  In  other words, the order $|\Gamma(\cD_1)|$
of  the  symmetry  group  of  the separatrix diagram $\cD_1$ is equal
to  three (the vertices of any separatrix diagram are numbered, while
the  edges are not). We can group the entries in the sum above having
the  same  length  $w$  of  the  waist  curve  of  the  cylinder. The
number   of  ordered  partitions  of  a  large  integer  $w$  into  the
sum   of  three  positive  integers  $w=\ell_1+\ell_2+\ell_3$  equals
approximately  $w^2/2$. Thus we can rewrite the sum above as follows:
\begin{multline}
\label{eq:D1}
\frac{1}{3}\sum_{\substack{\ell_1,\ell_2,\ell_3,h\\(\ell_1+\ell_2+\ell_3)h\le N}} (\ell_1+\ell_2+\ell_3)
\sim
\frac{1}{3}\sum_{\substack{w,h\\w\cdot h\le N}} w\cdot\frac{w^2}{2}
=
\frac{1}{6}\ \sum_{\substack{w,h\\w\le \frac{N}{h} }} w^3
\sim\\ \sim
\frac{1}{6}\ \sum_{h\in\N} \frac{1}{4}\cdot \left(\frac{N}{h}\right)^4
=
\frac{N^4}{24}\cdot\sum_{h\in\N} \frac{1}{h^4}
=
\frac{N^4}{24}\cdot \zeta(4)
=
\frac{N^4}{24}\cdot \frac{\pi^4}{90}\,.
\end{multline}

Consider  now  a  square-tiled  surface  corresponding  to the middle
diagram  $\cD_2$  on Figure~\ref{fig:diag}. The only admissible way
to  paste  horizontal cylinders into this diagram is drawn on
Figure~\ref{fig:colored:separatrix:diagram}  indicating
how the boundary components are coupled.

\begin{figure}[htb]
%
\includegraphics{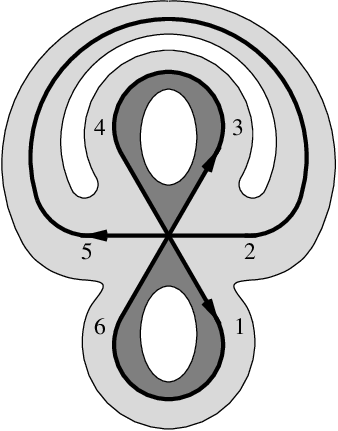}
\vspace{160bp}
\caption{
\label{fig:colored:separatrix:diagram}
The colors on the ribbon graph associated to the separatrix diagram
indicate how to paste in the horizontal cylinders.
}
\end{figure}

Denote  the  integer  lengths  of  the  separatrix  loops by $\ell_1,
\ell_2,   \ell_3$   (see  Figure~\ref{fig:diag}).  The  realizability
condition  imposes  the  linear  relation  $\ell_1=\ell_3$.  The flat
surface  $S$  is  glued  from two cylinders: one having the waist
curve of length $\ell_1$,  and  the  other  one  having the waist
curve of length $\ell_1+\ell_2$. Denote the heights and twists of the
corresponding cylinders by $h_1, h_2$ and by $\phi_1,\phi_2$
respectively. The integer twist of the first cylinder takes value in
the interval $[1,\ell_1]$; the integer twist of the second cylinder
takes value  in  the  interval  $[1,\ell_1+\ell_2]$.  Thus, the
number  of surfaces  of 2-cylinder type with area bounded by $N$ is
asymptotically equivalent to the value of the sum
\begin{multline}
\label{eq:D2}
\sum_{\substack{\ell_1,\ell_2,h_1,h_2\\\ell_1 h_1+(\ell_1+\ell_2)h_2\le N}} \ell_1(\ell_1+\ell_2) =
\sum_{\substack{\ell_1,\ell_2,h_1,h_2\\\ell_1(h_1+h_2)+\ell_2 h_2\le N}} \ell_1^2+\ell_1 \ell_2\,.
=\\=
\cfrac{N^4}{24} \big[ 2\cdot\zeta(1,3)+\zeta(2,2)\big] =
\cfrac{N^4}{24} \left[ 2\cdot\cfrac{\zeta(4)}{4}+\cfrac{3\zeta(4)}{4}\right]
= \cfrac{N^4}{24} \cdot \cfrac{5}{4} \cdot \cfrac{\pi^4}{90}
\end{multline}
(see~\S5 in~\cite{Zorich:square:tiled} for details of the
calculation).

Adding the contributions of the two diagrams and applying
$\left. 2\frac{d}{dN}\right|_{N=1}$ we finally get
$$
\operatorname{Vol}\mathcal{H}_1(2)= \cfrac{\pi^4}{120}\,.
$$

\section{Impact of the choice of the integer lattice on
diagram-by-diagram counting of Masur--Veech volumes}
\label{s:contibution:of:diag:for:two:lattices}

Recall the following two natural choices of the integer lattice in
period coordinates of a stratum of quadratic differentials.

\begin{enumerate}
\item the subset of $H^1_-(\hat S, \{\hat{P}_1,\dots, \hat{P}_\noz\};\C)$
consisting of those linear forms which take values in $\Z\oplus i\Z$
on $H_1^-(\hat S, \{\hat{P}_1,\dots,\hat{P}_\noz\};\Z)$
\item $H^1_-(\hat S, \{\hat{P}_1,\dots, \hat{P}_\noz\};\C)\cap
H^1(\hat S,\{\hat{P}_1,\dots, \hat{P}_\noz\};\Z\oplus i\Z)$
\end{enumerate}

Here we do not mark the preimages of simple poles, i.e.
$\hat{P}_1,\dots, \hat{P}_\noz$ are preimages of zeroes of
the quadratic differential under the double cover. The difference between the two choices affects the linear
holonomy along saddle connections joining two distinct zeroes. Under
the first convention the linear holonomy along such saddle
connections belongs to the half integer lattice $\frac{1}{2}\Z\oplus
\frac{i}{2}\Z$ while under the second convention it belongs to the
integer lattice $\Z\oplus i\Z$. This implies that the first lattice
in the period coordinates is a proper sublattice of index
$4^{s-1}$ of
the second one, where $s$ is the number of zeroes of the
quadratic differential.

\begin{figure}[htb]
\includegraphics{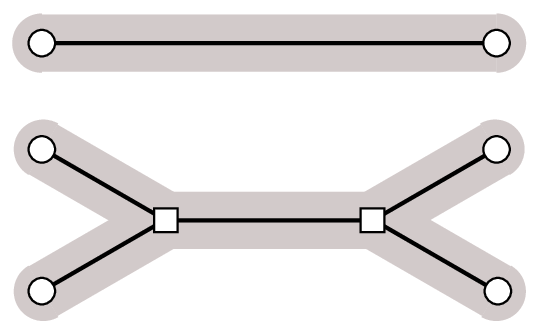}
   %
   %
\begin{picture}(0,0)(0,0)
\put(-9,-50){$\ell_1$}
\put(-46,-41){$\ell_2$}
\put(-46,-70){$\ell_3$}
\put(28,-70){$\ell_4$}
\put(28,-41){$\ell_5$}
\put(-9,-7){$\ell_6$}
\end{picture}

\vspace{80pt} 
\caption{
\label{fig:Q11i6}
A separatrix diagram for $\cQ(1^2, -1^6)$
}
\end{figure}

Thus, in the case of the stratum $\cQ(1^2, -1^6)$, it is a sublattice
of index $4$. Note, however, that the contributions of individual
separatrix diagrams change by the factors, which are, in general,
different from the index of one lattice in the other. Consider, for
example the separatrix diagram as in Figure~\ref{fig:Q11i6}
representing the stratum $\cQ(1^2, -1^6)$. The absolute contribution
of this separatrix diagram is twice bigger under the first choice of
the lattice than under the second one. Indeed, under the first choice
of the lattice in period coordinates, the parameter $\ell_1$ is
half-integer, as well as all the other parameters $\ell_2,\dots,
\ell_6,h,\phi$, (where $h,\phi$ are the height and the twist of the
single cylinder) whereas $\ell_1$ is integer under the second choice
of the lattice,  and the other parameters are half-integers. Hence,
the number of partitions of a given natural number $w$ (representing
the length of the waist curve of the single cylinder) into the sum
$$
w=2(\ell_1+\ell_2+\ell_3+\ell_4+\ell_5)
$$
is asymptotically twice bigger under the first choice of the lattice.

Now let us perform the computation for this diagram under the first
convention of the choice of the lattice. When the zeroes and poles
are \textit{not labeled}, the diagram has symmetry of order $4$. Since
the twist $\phi$ is half-integer, there are $2w$ choices of $\phi$.
Recall also, that the standard pillow as in
Figure~\ref{fig:square:pillow} has area $1/2$. Thus, under the first
choice of the lattice in period coordinates, the number of pillowcase
covers of order at most $N$ corresponding to this separatrix diagram
has the following asymptotics as $N\to+\infty$ (compare to
computation~\eqref{eq:D1}):
\begin{multline*}
\frac{1}{4}\sum_{\substack{\ell_1,\ell_2,\ell_3,\ell_4,\ell_5,h\in\N/2\\
(2(\ell_1+\ell_2+\ell_3+\ell_4+\ell_5))\cdot h\le N/2}} 2(2(\ell_1+\ell_2+\ell_3+\ell_4+\ell_5))
\sim
\frac{1}{4}\sum_{\substack{w,H\in\N\\w\cdot H\le N}} 2w\cdot\frac{w^4}{4!}
=\\=
\frac{1}{2\cdot 4!}\ \sum_{\substack{w,H\in\N\\w\le \frac{N}{H} }} w^5
\sim
\frac{1}{2\cdot 4!}\ \sum_{H\in\N} \frac{1}{6}\cdot \left(\frac{N}{H}\right)^6
=
\frac{N^6}{12\cdot 4!}\cdot\sum_{H\in\N} \frac{1}{H^6}
=
\frac{N^6}{12\cdot 4!}\cdot \zeta(6)\,.
\end{multline*}
Here in the first equivalence we passed from the half-integer
parameter $h$ to the integer parameter $H=2h$ replacing the condition
$wh\le N/2$ by the equivalent condition $wH\le N$. Applying $\left.
2\frac{d}{dN}\right|_{N=1}$ and multiplying by the factor $6!\cdot
2!$ responsible for numbering of zeroes and poles, we get
the total contribution $60\zeta(6)$ to the volume
$\Vol^{(1)}\cQ^{numbered}_1(1^2,-1^6)$ defined under the first
convention on the choice of the lattice.

Similar computations for each separatrix
diagram in this stratum are cumbersome, so,
following~\cite{AEZ:Dedicata}, we
distribute the diagrams into groups organized in the following way.

Each connected component of the separatrix diagram is encoded by a
vertex of a graph decorated with an ordered pair of natural numbers
indicating the number of zeroes and poles living at the corresponding
component. A flat cylinder joining two connected components of a
separatrix diagram is encoded by an edge of the graph. For example,
the separatrix diagram from Figure~\ref{fig:Q11i6} contains two
connected components joined by a single cylinder. The corresponding
graph contains two vertices joined by a single edge; one vertex is
decorated with the pair $(2,4)$ (standing for $2$ zeroes and $4$
poles) and the other vertex is decorated with the pair $(0,2)$
(standing for $0$ zeroes and $2$ poles). This graph is the top entry
of the left column in Table~\ref{table:normalization:Q11i6}.

\begin{table}[htb]
$$
\begin{array}{|c|c|c|}
\hline
\text{Tree} &\text{Contribution to }\Vol^{(1)}&\cfrac{\text{Contribution to }\Vol^{(1)}}{\text{Contribution to }\Vol^{(2)}}\\
[-\halfbls]
&&\\
\hline&&\\
\begin{picture}(35,10)(0,0)
\put(0,0){\circle{4}}
\put(-5,5){\tiny 2,4}
\put(2,0){\line(1,0){26}}
\put(30,0){\circle{4}}
\put(25,5){\tiny 0,2}
\end{picture}
& 60\zeta(6)&2
\\
&&\\
\hline&&\\
\begin{picture}(35,10)(0,0)
\put(0,0){\circle{4}}
\put(-5,5){\tiny 1,3}
\put(2,0){\line(1,0){26}}
\put(30,0){\circle{4}}
\put(25,5){\tiny 1,3}
\end{picture}
&80\zeta(6)&2^7
\\
&&\\
\hline&&\\
\begin{picture}(70,10)(-5,0)
\put(0,0){\circle{4}}
\put(-5,5){\tiny 0,2}
\put(2,0){\line(1,0){26}}
\put(30,0){\circle{4}}
\put(25,5){\tiny 2,2}
\put(32,0){\line(1,0){26}}
\put(60,0){\circle{4}}
\put(55,5){\tiny 0,2}
\end{picture}
&72\zeta(2)\zeta(4)&2
\\
&&\\
\hline
&&\\
\begin{picture}(70,10)(-5,0)
\put(0,0){\circle{4}}
\put(-5,5){\tiny 1,3}
\put(2,0){\line(1,0){26}}
\put(30,0){\circle{4}}
\put(25,5){\tiny 1,1}
\put(32,0){\line(1,0){26}}
\put(60,0){\circle{4}}
\put(55,5){\tiny 0,2}
\end{picture}
&48\zeta(2)\zeta(4)&2^5
\\
&&\\
\hline&&\\
\begin{picture}(100,10)(-5,0)
\put(0,0){\circle{4}}
\put(-5,5){\tiny 0,2}
\put(2,0){\line(1,0){26}}
\put(30,0){\circle{4}}
\put(25,5){\tiny 1,1}
\put(32,0){\line(1,0){26}}
\put(60,0){\circle{4}}
\put(55,5){\tiny 1,1}
\put(62,0){\line(1,0){26}}
\put(90,0){\circle{4}}
\put(85,5){\tiny 0,2}
\end{picture}
&24\zeta^3(2)&2^3
\\
&&\\
\hline
&&\\
\begin{picture}(70,10)(-5,5)
\put(0,0){\circle{4}}
\put(-5,5){\tiny 0,2}
\put(2,0){\line(1,0){26}}
\put(30,0){\circle{4}}
\put(25,5){\tiny 2,0}
\put(32,0){\line(2,1){26}}
\put(60,14){\circle{4}}
\put(65,14){\tiny 0,2}
\put(32,0){\line(2,-1){26}}
\put(60,-14){\circle{4}}
\put(65,-14){\tiny 0,2}
\end{picture}
&4\zeta^3(2)&2
\\
&&\\
&&\\
\hline
\end{array}
$$
\caption{
\label{table:normalization:Q11i6}
Table of diagram contributions to the Masur--Veech volume
$\Vol\cQ_1(1^2, -1^6)$ in normalizations $(1)$ and $(2)$}
\end{table}

Note that the stratum $\cQ(1^2,-1^6)$ corresponds to genus zero, so
the underlying topological surface is a sphere. This implies that the
graph defined by a separatrix diagram representing the stratum
$\cQ(1^2,-1^6)$ is, actually, a tree. The first column of
Table~\ref{table:normalization:Q11i6} provides the list of all
possible decorated trees which appear for the stratum
$\cQ(1^2,-1^6)$. It is easy to verify that the ratio of contributions
of a given separatrix diagram to the volume of the stratum
$\cQ(1^\noz,-1^{\noz+4})$ computed under the two conventions on the
choice of the integer lattice depends only on the corresponding
decorated tree. We group together all the diagrams corresponding to
each decorated tree and indicated in the second column the
corresponding contribution to the volume under the first choice of
the lattice (using~\cite[\S 3.8]{AEZ:Dedicata} as the source). In the
third column we give the ratio of the contributions represented by
the corresponding tree. For example, the tree in the first line
represents the unique diagram shown in Figure~\ref{fig:Q11i6}; as it
was computed above its contribution to the volume under the first
choice of the lattice is $60\zeta(6)$ and the contribution to the
volume under the second choice of the lattice is twice smaller. These
data constitute the first line of
Table~\ref{table:normalization:Q11i6}.

Recall that the normalization factor between the two lattices in the
period coordinates of the stratum $\cQ(1^2,-1^6)$ is $4$. However,
observing Table~\ref{table:normalization:Q11i6} the reader can see
that the individual contributions of diagrams differ by factors $2$,
$2^3$, $2^5$, $2^7$.

Note that the trees with the same number of edges provide
contributions of the same ``arithmetic'' nature, namely the total
contribution of $1, 2, 3$-cylinder diagrams are
$$
140\zeta(6)+120\zeta(2)\zeta(4)+28\zeta^3(2)=\cfrac{\pi^6}{2}=\Vol^{(1)}\cQ(1^2,-1^6)
$$
respectively under the first choice of the lattice and
$$
\frac{245}{8}\zeta(6)+\frac{75}{2}\zeta(2)\zeta(4)+5\zeta^3(2)
=\cfrac{\pi^6}{8}=\Vol^{(2)}\cQ(1^2,-1^6)
$$
respectively under the second choice. The volumes $\Vol^{(1)}$ and
$\Vol^{(2)}$ differ by the factor $4$ as expected.

We get a polynomial identity
$$
140\zeta(6)+120\zeta(2)\zeta(4)+28\zeta^3(2)=\cfrac{\pi^6}{2}=
4\Big(\frac{245}{8}\zeta(6)+\frac{75}{2}\zeta(2)\zeta(4)+5\zeta^3(2)\Big)
$$
in zeta values at even integers. Considering other strata
$\cQ(1^\noz,-1^{\noz+4})$ we get an infinite series of analogous
identities in zeta values at even integers.

We did not study the identities resulting from different choices of
the lattice in period coordinates for more general strata of
meromorphic quadratic differentials with at most simple poles in
genus zero. Considering zeroes of even order might produce identities
of much more elaborate arithmetic nature.

If our guess that the contribution of $k$-cylinder square-tiled
surfaces to a given stratum of Abelian differentials is a polynomial
in multiple zeta values with rational (or even integer) coefficients
is true, then playing with different choices of an integer lattice we
will get infinite series of mysterious polynomial identities in
multiple zeta values.

Another challenge is to see whether one can obtain some information
about volume asymptotics for large genera playing with the choice of
an integer lattice. We leave both questions as a problem, which might
be interesting to study.

\begin{Problem}
Describe and study polynomial identities on multiple zeta values
arising from $k$-cylinder contributions to the Masur--Veech volumes
under different choices of integer lattices in period coordinates.
Study these identities in asymptotic regimes when the genus of the
surface or the number of simple poles tends to infinity.
\end{Problem}

\section{Tables of the Masur--Veech volumes of low-dimensional strata
in the moduli spaces of meromorphic quadratic differentials with at
most simple poles}
\label{s:tables:volumes}

In the tables below we present the volumes of all low-dimensional
strata of quadratic differentials up to dimension $d=6$. The
approximate values of Masur--Veech volumes were actively used in the
papers~\cite{Goujard:SV} and~\cite{Goujard:volumes} to debug the
rigorous theoretically found values of the Masur--Veech volumes of
the strata of quadratic differentials and related Siegel--Veech
constants.

Each table is organized as follows.

The left column indicates the stratum $\cQ(d_1,\dots,d_k)$.
The second column provides the rigorous rational
number $r$ in the absolute contribution,
see~\eqref{eq:general:contribution},
$$
c_1(d_1,\dots,d_k)=r\cdot \zeta(d)
$$
of $1$-cylinder pillowcase covers to the Masur--Veech volume
$\Vol\cQ(d_1,\dots,d_k)$.

The third column provides experimental statistical data, namely,
the approximate proportions of the number of square-tiled
surfaces with $1$ cylinder, $2$-cylinders, and so on tiled
with tiny squares.

Combining data from the second and the third column, we
provide in the fourth column the resulting
approximate value of the volume
obtained experimentally.

In the right two columns we present the value of the Masur--Veech
volume of the corresponding stratum obtain by rigorous methods,
namely, the approximate numerical value in the second column from the
right and the exact value in the rightmost column.

The rightmost column contains a symbol indicating the rigorous
methods of computation of the exact value of the Masur--Veech volumes
of the stratum under consideration (see \cite{Goujard:volumes} for
more details):

\begin{itemize}
\item[EO.]
By the results of Eskin--Okounkov \cite{Eskin:Okounkov:pillowcase},
the generating functions for the number of pillowcase covers are
quasimodular. The volumes are derived from the asymptotics of these
functions, which can be easily computed using the quasimodularity
property. Note that this method do not give the volumes of the
connected components of the strata.
\item[g0.]
For the case of genus $0$ surfaces, there exists a closed formula for
volumes, that was conjectured by Kontsevich and then proved by
Athreya--Eskin--Zorich in \cite{AEZ:Dedicata} and
\cite{AEZ:genus:0}.
\item[hyp.]
For hyperelliptic components, volumes are easily deduced from the
volumes of strata of genus $0$ surfaces.
\item[nv.]
Some strata have a special property: they are non-varying, meaning
that the Lyapunov spectrum (and so the Siegel--Veech constant) is the
same for any Teichm\"uller curve, see \cite{Chen:Moeller}. The volume
is then deduced from the (common value of the) Siegel--Veech constant
using the technics of Masur--Zorich (see \cite{Goujard:SV});
\item[diag.]
Finally, as for $1$-cylinder surfaces, the volumes of low-dimensional
strata can be computed diagram by diagram.
\end{itemize}

We always use normalization as in~\cite{AEZ:genus:0}; see
also Convention~\ref{con:area:1:2}.
\vspace*{1cm}

\newpage

\centerline{Strata of dimension $4$}
\begin{table}[hbt]
\footnotesize 
$$
\begin{array}{|c|c|c|c|c|c|}
\hline &&&&\multicolumn{2}{|c|}{}\\ [-\halfbls]
\text{Component}& \text{Abs. contr.}             & \text{Statistics of}    & \text{Experim.} & \multicolumn{2}{|c|}{\text{Theoretical value}}\\
\text{of the}   & r\cdot\zeta(4) \text{ of} & \text{frequency of}  & \text{value}        & \multicolumn{2}{|c|}{\text{of the volume}}\\
\cline{5-6}
\text{stratum}  & \text{1-cylinder}         & 1:2:\cdots-\text{cylinder}& \text{of the}       &&\\
           & \text{surfaces}           & \text{surfaces}      & \text{volume}       & \text{Approx.}&\text{Exact}
\\
[-\halfbls] &&&&&\\ \hline
\multicolumn{6}{c}{} \\ [-\halfbls]
\multicolumn{6}{c}{} \\ [-\halfbls]
\multicolumn{6}{c}{\text{genus 0}}\\
\multicolumn{6}{c}{} \\ [-\halfbls]
\hline &&&& &\\ [-\halfbls]
\cQ(1, -1^5)&40&0.4382:0.5618&1.014\cdot \pi^{4}&1.000\cdot\pi^4& {\pi^4}\\
&&0.4444:0.5556&&&\\
&&\frac{4}{9}:\frac{5}{9}&&&\gzero, \EO\\
[-\halfbls] &&&&&\\ \hline
\multicolumn{6}{c}{} \\ [-\halfbls]
\multicolumn{6}{c}{} \\ [-\halfbls]
\multicolumn{6}{c}{\text{genus 1}}\\
\multicolumn{6}{c}{} \\ [-\halfbls]
\hline &&&&& \\ [-\halfbls]
\cQ(1^2, -1^2)&50/3&0.5724:0.4276&0.324 \cdot\pi^{4}&0.3333\cdot\pi^4&{\frac{1}{3}\cdot\pi^4}\\
&&0.5556:0.4444&&&\\
&&\frac{4}{9}:\frac{5}{9}&&&\hyp, \diag, \EO\\
[-\halfbls] &&&&&\\ \hline &&&&& \\ [-\halfbls]
\cQ(3, -1^3)&30&0.6016:0.3984&0.554 \cdot\pi^{4}&0.5556 \cdot\pi^{4}&{\frac{5}{9}\cdot\pi^4}\\
&&0.6000:0.4000&&&\\
&&\frac{3}{5}:\frac{2}{5}&&&\diag,\nonvar, \EO\\
[-\halfbls] &&&&&\\ \hline
\multicolumn{6}{c}{} \\ [-\halfbls]
\multicolumn{6}{c}{} \\ [-\halfbls]
\multicolumn{6}{c}{\text{genus 2}}\\
\multicolumn{6}{c}{} \\ [-\halfbls]
\hline &&&&& \\ [-\halfbls]
\cQ(2^2)&17/4&0.7065:0.2130:0.0805&{0.666} \cdot\pi^{2}&0.6666\cdot\pi^2&{\frac{2}{3}\cdot\pi^2}\\
&&0.6991: 0.2089 :0.0920&&&\\
&&
&&&\hyp, \diag, \EO\\
[-\halfbls] &&&&&\\ \hline &&&&& \\ [-\halfbls]
\cQ(5, -1)&12&0.6488:0.3512&0.206 \cdot\pi^{4}&0.2074\cdot\pi^4&{\frac{28}{135}\cdot\pi^4}\\
&&0.6429:0.3571&&&\\
&&\frac{9}{14}:\frac{5}{14}&&&\diag, \nonvar, \EO\\
[-\halfbls] &&&&&\\ \hline
\end{array}
$$
\end{table}

%
%

For $\cQ(2^2)$ the exact proportions
are more bulky, so we give them separately:
$$
\text{Frequences for }\cQ(2,2):\quad
\frac{17\zeta(4)}{4}\ :\ 16\zeta(3)-\frac{33}{2}\zeta(4)\ :\ 4\zeta(2)-16\zeta(3)+\frac{49\zeta(4)}{4}\,.
$$

\newpage
\centerline{Strata of dimension $5$}
\begin{table}[hbt]
\footnotesize 
$$
\begin{array}{|c|c|c|c|c|c|}
\hline &&&&\multicolumn{2}{|c|}{}\\ [-\halfbls]
\text{Component}& \text{Abs. contr.}             & \text{Statistics of}    & \text{Experim.} & \multicolumn{2}{|c|}{\text{Theoretical value}}\\
\text{of the}   & r\cdot\zeta(5) \text{ of} & \text{frequency of}  & \text{value}        & \multicolumn{2}{|c|}{\text{of the volume}}\\
\cline{5-6}
\text{stratum}  & \text{1-cylinder}         & 1:2:\cdots-\text{cylinder}& \text{of the}       &&\\
           & \text{surfaces}           & \text{surfaces}      & \text{volume}       & \text{Approx.}&\text{Exact}
\\
[-\halfbls] &&&&&\\ \hline
\multicolumn{6}{c}{} \\ [-\halfbls]
\multicolumn{6}{c}{} \\ [-\halfbls]
\multicolumn{6}{c}{\text{genus 0}}\\
\multicolumn{6}{c}{} \\ [-\halfbls]
\hline &&&&& \\ [-\halfbls]
\cQ(2, -1^6)&60&0.2472:0.6740:0.0789&2.584 \cdot\pi^{4}&2.666\cdot\pi^4&{\frac{8}{3}\cdot\pi^4}\\
&&0.2395 :  \mydummy   :  \mydummy  &&&\gzero, \EO\\
[-\halfbls] &&&&&\\ \hline
\multicolumn{6}{c}{} \\ [-\halfbls]
\multicolumn{6}{c}{} \\ [-\halfbls]
\multicolumn{6}{c}{\text{genus 1}}\\
\multicolumn{6}{c}{} \\ [-\halfbls]
\hline &&&&& \\ [-\halfbls]
\cQ(2, 1, -1^3)&45&0.4919:0.4472:0.0610&0.974 \cdot\pi^{2}&\pi^4&{\pi^4}\\
&&0.4790 :  \mydummy   :  \mydummy  &&&\nonvar, \EO\\
[-\halfbls] &&&&&\\ \hline &&&&&\\ [-\halfbls]
\cQ(4, -1^4)&84&0.4309:0.5163:0.0528&2.075 \cdot\pi^{2}&2\cdot\pi^4&{2\cdot\pi^4} \\
&&0.4471 :  \mydummy   :  \mydummy  &&&\nonvar, \EO\\
[-\halfbls] &&&&&\\ \hline
\multicolumn{6}{c}{} \\ [-\halfbls]
\multicolumn{6}{c}{} \\ [-\halfbls]
\multicolumn{6}{c}{\text{genus 2}}\\
\multicolumn{6}{c}{} \\ [-\halfbls]
\hline &&&&& \\ [-\halfbls]
\cQ(2, 1^2)&11/2&0.4398:0.4667:0.0935&0.133 \cdot\pi^{4}&0.1333\cdot\pi^4&{\frac{2}{15}\cdot\pi^4}\\
&&0.4391 : 0.4621  : 0.0988 &&&\hyp, \diag, \EO\\
[-\halfbls] &&&&&\\ \hline &&&&& \\ [-\halfbls]
\cQ(4, 1, -1)&68/3&0.4772:0.4854:0.0374&0.506 \cdot\pi^{4}&0.5333\cdot\pi^4&{\frac{8}{15}\cdot\pi^4}\\
&&0.4524 :  \mydummy   :  \mydummy  &&&\nonvar, \EO\\
[-\halfbls] &&&&&\\ \hline &&&&& \\ [-\halfbls]
\cQ(3, 2, -1)&115/6&0.5528:0.3813:0.0659&0.369 \cdot\pi^{4}&0.3704\cdot\pi^4&{\frac{10}{27}\cdot\pi^4}\\
&&0.5509 :  \mydummy   :  \mydummy  &&&\nonvar, \EO\\
[-\halfbls] &&&&&\\ \hline &&&&& \\ [-\halfbls]
\cQ^{hyp}(6, -1^2)&65/12&0.3057:0.6374:0.0569&0.189 \cdot\pi^{4}&0.1777\cdot\pi^4&{\frac{8}{45}\cdot \pi^4}\\
&&0.3243 :  \mydummy   :  \mydummy  &&&\hyp\\
[-\halfbls] &&&&&\\ \cline{1-6} &&&&& \\ [-\halfbls]
\cQ^{non}(6, -1^2)&181/3&0.5366:0.4008:0.0626&1.197 \cdot\pi^{4}&1.1852\cdot\pi^4&{\frac{32}{27}\cdot\pi^4}\\
&&0.5419 :  \mydummy   :  \mydummy  &&&\nonvar, \EO\\
[-\halfbls] &&&&&\\ \hline
\multicolumn{6}{c}{} \\ [-\halfbls]
\multicolumn{6}{c}{} \\ [-\halfbls]
\multicolumn{6}{c}{\text{genus 3}}\\
\multicolumn{6}{c}{} \\ [-\halfbls]
\hline &&&&& \\ [-\halfbls]
\cQ(8)&56/3&0.5211:0.4024:0.0764&0.381 \cdot\pi^{4}&0.3704\cdot\pi^4&\frac{10}{27}\cdot\pi^4\\
&&&&&\EO\\
[-\halfbls] &&&&&\\ \hline
\end{array}
$$
\end{table}

\noindent
Exact values of frequences of $1:2:3$-cylinder
square-tiled surfaces for $\cQ(2,1^2)$
are:
$$
\frac{11\zeta(5)}{2}\ :\
3\zeta(2)\zeta(3)+\frac{16\zeta(4)}{3}-\frac{11\zeta(5)}{2}\ :\
\zeta(2)(\frac{8\zeta(2)}{3}-3\zeta(3))
$$

%

\newpage
\centerline{Strata of dimension $6$}
\begin{table}[hbt]
\footnotesize 
$$
\begin{array}{|c|c|c|c|c|c|}
\hline &&&&\multicolumn{2}{|c|}{}\\ [-\halfbls]
\text{Component}& \text{Abs.\! cont.}             & \text{Statistics of}    & \text{Experim.} & \multicolumn{2}{|c|}{\text{Theoretical value}}\\
\text{of the}   & r\cdot\zeta(6) \text{ of} & \text{frequency of}  & \text{value}        & \multicolumn{2}{|c|}{\text{of the volume}}\\
\cline{5-6}
\text{stratum}  & \text{1-cylinder}         & 1:2:\cdots-\text{cylinder}& \text{of the}       &&\\
           & \text{surfaces}           & \text{surfaces}      & \text{volume}       & \text{Approx.}&\text{Exact}
\\
[-\halfbls] &&&&&\\ \hline
\multicolumn{6}{c}{} \\ [-\halfbls]
\multicolumn{6}{c}{} \\ [-\halfbls]
\multicolumn{6}{c}{\text{genus 0}}\\
\multicolumn{6}{c}{} \\ [-\halfbls]
\hline &&&&& \\ [-\halfbls]
\cQ(1^2, -1^6)&140&\text{0.2943:0.4236:0.2821}&0.5034 \cdot\pi^{6}&0.5000 \cdot\pi^6&{\frac{1}{2}\cdot\pi^6} \\
[-\halfbls] &&&&&\\ \hline &&&&& \\ [-\halfbls]
\cQ(3, -1^7)&84&\text{0.1106:0.6187:0.2707}&0.8037 \cdot\pi^{6}&0.8000\cdot\pi^6&{\frac{3}{4}\cdot\pi^6} \\
[-\halfbls] &&&&&\\ \hline
\multicolumn{6}{c}{} \\ [-\halfbls]
\multicolumn{6}{c}{} \\ [-\halfbls]
\multicolumn{6}{c}{\text{genus 1}}\\
\multicolumn{6}{c}{} \\ [-\halfbls]
\hline &&&&& \\ [-\halfbls]
\cQ(1^3, -1^3)&77&\text{0.4366:0.4000:0.1634}&0.1866 \cdot\pi^{6}&{0.1837\cdot\pi^6} &\frac{11}{60}\cdot{\pi^6} \\
[-\halfbls] &&&&&\\ \hline &&&&& \\ [-\halfbls]
\cQ(3, 1, -1^4)&126&\text{0.4000:0.4520:0.1480}&0.3333 \cdot\pi^{6}&0.3333\cdot\pi^6&{\frac{1}{3}\cdot\pi^6} \;(\nonvar)\\
[-\halfbls] &&&&&\\ \hline &&&&& \\ [-\halfbls]
\cQ(2^2, -1^4)&110&\text{0.364:0.511:0.108:0.016}&3.1544 \cdot\pi^{4}&{3.0222\cdot\pi^4}&\frac{136}{45}\cdot\pi^4 \\
[-\halfbls] &&&&&\\ \hline &&&&&\\ [-\halfbls]
\cQ(5, -1^5)&210&\text{0.3276:0.5301:0.1423}&0.6783 \cdot\pi^{6}&0.7000\cdot\pi^6&{\frac{7}{10}\cdot\pi^6} \;(\nonvar) \\
[-\halfbls] &&&&&\\ \hline
\multicolumn{6}{c}{} \\ [-\halfbls]
\multicolumn{6}{c}{} \\ [-\halfbls]
\multicolumn{6}{c}{\text{genus 2}}\\
\multicolumn{6}{c}{} \\ [-\halfbls]
\hline &&&&& \\ [-\halfbls]
\cQ(1^4)&49/3&\text{0.2512:0.5577:0.1911}&0.0688 \cdot\pi^{6}&0.0666\cdot\pi^6&{\frac{\pi^6}{15}} \\
[-\halfbls] &&&&&\\ \hline &&&&& \\ [-\halfbls]
\cQ(3, 1^2, -1)&119/3&\text{0.3593:0.5081:0.1325}&0.1168 \cdot\pi^{6}&0.1104\cdot\pi^6&{\frac{1}{9}\cdot\pi^6} \;(\nonvar) \\
[-\halfbls] &&&&&\\ \hline &&&&& \\ [-\halfbls]
\cQ(2^2, 1, -1)&94/3&\text{0.391:0.503:0.096:0.0098}&0.837 \cdot\pi^{4}&0.8000\cdot\pi^4&{\frac{4}{5}\cdot\pi^4} \;(\nonvar) \\
[-\halfbls] &&&&&\\ \hline &&&&& \\ [-\halfbls]
\cQ(5, 1, -1^2)&189/2&\text{0.4252:0.4569:0.1179}&0.2352 \cdot\pi^{6}&0.2333\cdot\pi^6&{\frac{7}{30}\cdot\pi^6} \;(\nonvar) \\
[-\halfbls] &&&&&\\ \hline &&&&& \\ [-\halfbls]
\cQ(4, 2, -1^2)&317/4&\text{0.450:0.449:0.089:0.012}&1.8409\cdot\pi^{4}&1.866\cdot \pi^4& \frac{28}{15}\cdot\pi^4 \;(\nonvar) \\
[-\halfbls] &&&&&\\ \hline &&&&& \\ [-\halfbls]
\cQ^{hyp}(3^2, -1^2)&161/30&\text{0.1724:0.6520:0.1756}&0.0329 \cdot\pi^{6}&0.0333\cdot\pi^6&{\frac{1}{30}\cdot\pi^6} \\
[-\halfbls] &&&&&\\ \cline{1-6} &&&&& \\ [-\halfbls]
\cQ^{nh}(3^2, -1^2)&1106/15&\text{0.4894:0.3951:0.1154}&0.1594 \cdot\pi^{6}&0.1630\cdot\pi^6&{\frac{22}{135}\cdot\pi^6 } \;(\nonvar)\\
[-\halfbls] &&&&&\\ \hline &&&&& \\ [-\halfbls]
\cQ(7, -1^3)&441/2&\text{0.4179:0.4626:0.1195}&0.5583 \cdot\pi^{6}&0.54\cdot\pi^6&\frac{27}{50}\cdot{\pi^6} \\
[-\halfbls] &&&&&\\ \hline &&&&& \\ [-\halfbls]
%
\cQ(7, 1)&37&\text{0.3724:0.5171:0.1106}&0.1051 \cdot\pi^{6}&0.1028\cdot\pi^6&{\frac{18}{175}\cdot\pi^6} \;(\nonvar) \\
[-\halfbls] &&&&&\\ \hline &&&&& \\ [-\halfbls]
\cQ^{hyp}(6, 2)&65/48&\text{0.237:0.608:0.137:0.018}&0.0597 \cdot\pi^{4}&0.0593\cdot\pi^4&{\frac{8}{135}\cdot\pi^4} \\
[-\halfbls] &&&&&\\ \cline{1-6} &&&&& \\ [-\halfbls]
\cQ^{nh}(6, 2)&389/12&\text{0.473:0.380:0.112:0.035}&0.7155 \cdot\pi^{4}&0.7111\cdot\pi^4&\frac{96}{135}\cdot\pi^4 \\
[-\halfbls] &&&&&\\ \hline &&&&& \\ [-\halfbls]
\cQ(5, 3)&77/3&\text{0.488:0.383:0.129}&0.056 \cdot\pi^{6}&0.0588\cdot\pi^6&\frac{14}{243}\cdot{\pi^6} \\
[-\halfbls] &&&&&\\ \hline &&&&& \\ [-\halfbls]
\cQ(4^2)&92/3&\text{0.388:0.481:0.109:0.022}&0.8259 \cdot\pi^{4}&0.8000\cdot\pi^4&{\frac{4}{5}\cdot\pi^4} \\
[-\halfbls] &&&&&\\ \hline &&&&& \\ [-\halfbls]
\cQ^{reg}(9, -1)&385/3&\text{0.4569:0.4195:0.1236}&0.2972 \cdot\pi^{6}&&\\
[-\halfbls] &&&&0.3580\cdot\pi^6&\frac{15224}{42525}\cdot{\pi^6} \\
\cQ^{irr}(9, -1)&55/3&\text{0.3024:0.5740:0.1236}&0.0642 \cdot\pi^{6}&&\\
[-\halfbls] &&&&&\\ \hline
\end{array}
$$
\end{table}
   %
\noindent
As before, we applied the method of Eskin--Okounkov to all strata,
and methods specific for genus $0$ and for hyperelliptic components when
applicable. For the non-varying strata ($\nonvar$)
we also used evaluation through Siegel--Veech constants.

\end{document}